\documentclass[12pt,reqno]{amsart}
\usepackage[margin=1in]{geometry}
\usepackage{amsmath,amssymb,amsthm,graphicx,amsxtra, setspace}
\usepackage[utf8]{inputenc}
\usepackage{mathrsfs}
\usepackage{hyperref}
\usepackage{upgreek}
\usepackage{mathtools}
\allowdisplaybreaks

% THEOREM Environments ---------------------------------------------------
%\setlength{\textheight}{19.5 cm} \setlength{\textwidth}{12.5 cm}
\newtheorem{theorem}{Theorem}[section]
\newtheorem{lemma}[theorem]{Lemma}
\newtheorem{proposition}[theorem]{Proposition}

\newtheorem{corollary}[theorem]{Corollary}
\newtheorem{definition}[theorem]{Definition}
\newtheorem{example}[theorem]{Example}
\newtheorem{remark}[theorem]{Remark}

\let\originalleft\left
\let\originalright\right
\renewcommand{\left}{\mathopen{}\mathclose\bgroup\originalleft}
\renewcommand{\right}{\aftergroup\egroup\originalright}

% MATH -----------------------------------------------------------
\newcommand{\Tr}{\mathop{\mathrm{Tr}}}
\renewcommand{\d}{\/\mathrm{d}\/}

\def\e{\varepsilon}

\def\L{\mathrm{L}}
\def\A{\mathrm{A}}
\def\I{\mathrm{I}}
\def\2{\varsigma}
\def\1{\mathcal{O}}
\def\F{\mathrm{F}}
\def\C{\mathrm{C}}
\def\c{{c}}

\def\J{\mathrm{J}}
\def\B{\mathrm{B}}
\def\D{\mathrm{D}}
\def\y{\mathrm{y}}

\def\E{\mathbb{E}}
\def\X{\mathrm{X}}
\def\x{\mathrm{x}}

\def\z{\mathrm{z}}
\def\v{\mathfrak{v}}

\def\W{\mathrm{W}}
\def\G{\mathrm{G}}
\def\Q{\mathrm{Q}}
\def\M{\mathrm{M}}
\def\N{\mathbb{N}}

\def\wi{\widetilde}
\def\Q{\mathrm{Q}}
\def\u{\mathrm{U}}
\def\P{\mathbb{P}}
\def\u{\mathfrak{u}}
\def\H{\mathrm{H}}

\def\3{\varrho}

\newcommand{\R}{\mathbb{R}}

\renewcommand{\d}{\/\mathrm{d}\/}

% ----------------------------------------------------------------
\newcommand{\Addresses}{{% additional braces for segregating \footnotesize
		\footnote{
			%	\footnotesize
			\noindent \textsuperscript{1,2}Department of Mathematics, Indian Institute of Technology Roorkee-IIT Roorkee,
			Haridwar Highway, Roorkee, Uttarakhand 247667, INDIA.\par\nopagebreak
			\noindent  \textit{e-mail:} \texttt{Manil T. Mohan: maniltmohan@ma.iitr.ac.in, maniltmohan@gmail.com.}
			
			\textit{e-mail:} \texttt{Ankit Kumar: akumar14@mt.iitr.ac.in.}
			
			\noindent \textsuperscript{*}Corresponding author.
			
			\textit{Key words:} Stochastic generalized Burgers-Huxley equation,  Irreducibility, Strong Feller, Invariant measures, Large deviation principle, Occupation measures.
			
			Mathematics Subject Classification (2020): Primary 60F10; Secondary 60J35; 60A10; 37L40.

}}}

\begin{document}
	%	\linenumbers
	
	\title[LDP for the SGBH equation]{Large deviation principle for occupation measures of stochastic generalized Burgers-Huxley equation
		\Addresses}
	
	\author[A. Kumar and M. T. Mohan]
	{Ankit Kumar\textsuperscript{1} and Manil T. Mohan\textsuperscript{2*}}

	\maketitle
	
	\begin{abstract}
		The present work deals with the global solvability as well as asymptotic analysis of  stochastic generalized Burgers-Huxley (SGBH) equation perturbed by space-time white noise in a bounded interval of $\R$. We first prove the existence of unique mild  as well as strong solution  to SGBH equation and then obtain the existence of an invariant measure. Later, we establish two major properties of the Markovian semigroup associated with the solutions of SGBH equation, that is, irreducibility and strong Feller property. These two properties guarantees the uniqueness of invariant measures and ergodicity also.  Then, under further assumptions on the noise coefficient, we discuss the ergodic behavior of the solution of SGBH equation by providing a Large Deviation Principle (LDP) for the occupation measure for large time (Donsker-Varadhan), which describes the exact rate of exponential convergence. 
	\end{abstract}
	
	\section{Introduction}
	The stochastic generalized Burgers-Huxley equation (SGBH) equation describes a prototype model for describing the interaction between reaction mechanisms, convection effects and diffusion transports (cf. \cite{MTMAK}). We consider the generalized Burgers-Huxley equation perturbed by a random forcing, which is a space-time white noise  (or Brownian sheet), as
	\begin{align}\label{1.1}
		\frac{\partial u(t,\xi)}{\partial t} &= \nu \frac{\partial^{2}u(t,\xi)}{\partial \xi^{2}} -\alpha u^{\delta}(t,\xi)\frac{\partial u(t,\xi)}{\partial \xi} + \beta u(t,\xi)(1-u^{\delta}(t,\xi))(u^{\delta}(t,\xi)-\gamma)\nonumber\\&\quad+ \G \frac{\partial^2\wi\W(t,\xi)}{\partial \xi\partial t},
	\end{align}
	for $(t,\xi)\in(0,T)\times(0,1)$,	where $\alpha>0$ is the advection coefficient, $\beta>0$, $\delta\geq 1$ and $\gamma \in (0,1)$ are parameters. The noise coefficient $\G:\L^2(0,1)\to \L^2(0,1)$ is a bounded linear operator, $\wi\W(t,\xi),$ $t\geq 0$, $x\in(0,1)$ is a zero mean Gaussian process, whose covariance function is given by 
	$$\mathbb{E}\left[\wi\W(t,\xi)\wi\W(s,\zeta)\right]=(t\wedge s)(\xi\wedge \zeta),\ t,s\geq 0,\ \xi,\zeta\in\mathbb{R}.$$ On the other hand, one can consider a cylindrical Brownian process $\W(\cdot)$ by setting 
	\begin{align}\label{1p2}
		\W(t)=\frac{\partial\wi\W(t)}{\partial \xi}=\sum_{k=1}^{\infty}e_k\beta_k(t),
	\end{align}
	where $\{e_k\}_{k=1}^{\infty}$ is an orthonormal basis of $\L^2(0,1)$ and $\{\beta_k	\} _{k=1}^{\infty}$ is a sequence of  independent real Brownian motions in a fixed probability space $(\Omega,\mathscr{F},\mathbb{P})$ adapted to a filtration $\{\mathscr{F}_t\}_{t\geq 0}$. It is well-known that the series \eqref{1p2} does not converge in $\L^2(0,1)$, but it is convergent in any Hilbert space $\mathrm{U}$ such that the embedding $\L^2(0,1)\subset\mathrm{U}$ is Hilbert-Schimdt (cf. \cite{DaZ}).   With the above formulation, we rewrite the equation \eqref{1.1} as 
	\begin{align}\label{1p1}
		\d u(t)&=\left(\nu \frac{\partial^{2}u(t,\xi)}{\partial \xi^{2}} -\alpha u^{\delta}(t,\xi)\frac{\partial u(t,\xi)}{\partial \xi} + \beta u(t,\xi)(1-u^{\delta}(t,\xi))(u^{\delta}(t,\xi)-\gamma)\right)\d t\nonumber\\&\quad+\G\d\W(t). 
	\end{align}
	The equation \eqref{1p1} is supplemented by the Dirichlet boundary condition:
	\begin{align}\label{1.2}
		u(t,0)=u(t,1)=0, \ t\geq 0,
	\end{align}
	and the initial condition 
	\begin{align}\label{1p3}
		u(0,\xi)=x(\xi). 
	\end{align}
	Let $\L^2(\1):=\L^2(0,1)$ and $\A:=-\frac{\partial^2}{\partial \xi^2}$. In order to prove the existence of strong solution, for $\Q =\G\G^*$, we assume that 
	\begin{align}\label{1.3} 
		\D(\A^{\frac{\e}{2}}) \subset \mathrm{Im}(\Q^{\frac{1}{2}}), \ \text{ for some } \ 0<\e<1,
	\end{align} 
	where $\mathrm{Im}(\Q^{\frac{1}{2}})$ denotes the range of the operator $\Q^{\frac{1}{2}}$. It is equivalent to say that the range of the definition of $\A^{-\frac{\e}{2}}$ in $\L^2(\1)$ is contained in $\mathrm{Im}(\Q^{\frac{1}{2}})$. Under the assumption \eqref{1.3}, for any $\nu,\alpha,\beta>0,\delta<p<\infty$, the existence of a unique mild solution $u\in \C([0,T];\L^p(\mathcal{O})),$ $\mathbb{P}$-a.s., to the system \eqref{1p1}-\eqref{1p3} is established in \cite{MTMSGBH}.  For $1\leq \delta<2$, we prove the existence of an invariant measure for the  system \eqref{1p1}-\eqref{1p3}. Under the following assumption: 
	\begin{align}\label{13} 
		\D(\A^{\frac{\e}{2}}) \subset \mathrm{Im}(\Q^{\frac{1}{2}}), \ \text{ for some }\  \frac{1}{2}<\e<1,
	\end{align} 
	we prove the existence of a strong solution to the system \eqref{1p1}-\eqref{1p3},  for any $\nu,\alpha,\beta>0,$ $\gamma\in(0,1)$ and $1\leq \delta<\infty$. For $\delta\in[1,2]$, the uniqueness of strong solution is established for any $\nu,\alpha,\beta>0, \gamma\in(0,1)$ and  for $2<\delta<\infty$, the uniqueness is obtained for $\beta\nu>2^{2(\delta-1)}\alpha^2$. For these cases, we  prove the existence of an invariant measure for any real $\delta\geq 1$ as well as a Large Deviation Principle (LDP) for the occupation measure for large time (Donsker-Varadhan) for $\delta\in[1,2]$. 	The assumption \eqref{13} implies $\Tr(\G\G^*)<\infty$, which tells us that the energy injected by the random force is finite. The condition \eqref{13} also indicates that the noise is not too degenerate. 
	
	The stochastic Burgers' equation perturbed by cylindrical Gaussian noise is considered in the work \cite{DDR}, where the authors established the existence and uniqueness of mild solution, along with the existence of an invariant measure. The uniqueness of invariant measure is obtained in \cite{GDJZ}, by showing that the Markov semigroup associated with the solution is irreducible and strong Feller  (Chapter 14, \cite{GDJZ}). The existence and uniqueness of invariant measures for stochastic Burgers equations perturbed by multiplicative noise is established in \cite{GDG}. For a sample literature on stochastic Burgers equations, the interested readers are referred to see \cite{AG,BJ,GDAD1,DDR,MG2}, etc and the references therein. For a comprehensive study on ergodicity for infinite dimensional systems, one may refer to \cite{GDJZ,ADe}, etc.  The irreducibility of the semigroup corresponding to the solution of stochastic real Ginzburz-Landau equation driven by $\alpha$-stable noises is proved in \cite{RWJXLX}.  The global solvability results (the existence and uniqueness of strong solutions) and asymptotic analysis (the existence and uniqueness of invariant measures) of stochastic Burgers-Huxley equation is carried out in the paper \cite{MTMSBHE}.  In the work \cite{MTMSGBH}, the author studied SGBH equation perturbed by space-time white noise and established the existence and uniqueness of mild solution with the help of fixed point and stopping time arguments. Ergodicity results for the stochastic real Ginzburg-Landau equation driven by $\alpha$-stable noises are available in \cite{RWJXLX,LXu}, etc.

	The theory of large deviations, which provides asymptotic estimates for the probabilities of the rare events, is one of the important research topics in probability theory and received the required attention after the contributions of Varadhan. One can find the theory of large deviation along with its applications in \cite{DZ,DE,ST,VA}, etc. Several authors have established the Wentzell-Freidlin type large deviation principle (LDP) for different classes of stochastic partial differential equations (SPDEs) (cf. \cite{Chow1,KX,SW}, etc). By using weak convergence approach, the Wentzell-Freidlin type LDP for 2D stochastic Navier-Stokes equations (SNSE) perturbed by a small multiplicative noise in both bounded and unbounded domains has been obtained in \cite{SSSP}. Exponential estimates for exit from a ball of radius $R$ by time $T$ for	solutions of the stochastic Burgers-Huxley equation  in the context of Freidlin-Wentzell type LDP is studied in  \cite{MTMSBHE}.

	An SPDE is ergodic means that the occupation measure of it's solution converges to a unique invariant measure. A Donsker-Varadhan type LDP provides an estimate on the probability of occupation measures deviations from the invariant measure (cf. \cite{JD,MDD}). Thus, it is quite interesting to ask whether the occupation measures satisfy the Donsker-Varadhan type LDP. Similar to Wentzell-Freidlin type LDP, a good number of works are available in the literature regarding Donsker-Varadhan type LDP (cf. \cite{MDD,MG2,MG1,JVNV,JVNV1,LW1} etc and the references therein). A criterion for LDP for occupation measures has been developed in the work \cite{LW1}, the so-called hyper-exponential recurrence for strong Feller and irreducible Markov process. However, the recurrence condition is very strong and it is not easy to verify this condition for SPDEs. The author in \cite{MG2} and \cite{MG1} verified this recurrence and proved the LDP for occupation measures for stochastic Burgers equation and 2D SNSE in bounded domains. Recently, the authors in \cite{WA} verified the hyper-exponential recurrence and proved the LDP for occupation measures for a class of non-linear monotone SPDEs including the stochastic porous medium equation, stochastic $p$-Laplace equation,  stochastic fast-diffusion equation, etc. In the context of  stochastic convective Brinkman-Forchhemier equations, the hyper-exponential recurrence is verified in \cite{AKMTM} and the authors proved LDP of occupation measures.  Authors in \cite{JVNV1} established the LDP for occupation measures for a class of dissipative PDE's perturbed by a bounded random kick force. LDP for occupation measures for a class of dissipative PDE's perturbed by an unbounded kick force is studied in \cite{JVNV} and for stochastic reaction-diffusion equations driven by subordinate Brownian motions is established in \cite{RWLX}.

	In the present work, we first prove the existence and uniqueness of mild as well as strong solutions for the system  \eqref{1p1}-\eqref{1p3} under the assumptions \eqref{1.3} and \eqref{13}, respectively. The existence of  an invariant measure, strong Feller and topological irreducibility properties of the Markov semigroup corresponding to the solution of SGBH equation \eqref{1.1} and hence  the uniqueness of invariant measure (Doob's theorem) are also obtained. Then we discuss the ergodic behavior of SGBH equation by providing an LDP for occupation measures w.r.t. the stronger $\tau$-topology and an LDP of Donsker-Varadhan. To prove LDP w.r.t. the $\tau$- topology for SGBH equation, we established the hyper-exponential recurrence given in \cite{LW1}.  Let us now summarize the results obtained in this work as a table to emphasize the dependence of noise and different parameters appearing in  \eqref{1.1}. 
	
	\begin{table}[h]
		\begin{tabular}{|p {3.5cm}|p {5.9cm}|p {5.9cm}|}
			\hline &Mild solution (Assm. \eqref{1.3})&Strong solution (Assm. \eqref{13})\\
			\hline $\delta\in[1,\infty)$&existence for any $\nu,\alpha,\beta>0, \gamma\in(0,1)$ &existence for any $\nu,\alpha,\beta>0, \gamma\in(0,1)$\\
			\hline $\delta\in[1,2]$&existence and uniqueness  for any $\nu,\alpha,\beta>0, \gamma\in(0,1)$ &existence and uniqueness for any $\nu,\alpha,\beta>0, \gamma\in(0,1)$ \\
			\hline $\delta\in(2,\infty), \beta\nu>2^{2(\delta-1)}\alpha^2$&existence and uniqueness&existence and uniqueness\\
			\hline Invariant measure &existence for $\delta\in[1,2)$&existence for $\delta\in[1,\infty)$ with $ \beta\nu>2^{2(\delta-1)}\alpha^2$ for $\delta\in(2,\infty)$  \\
			\hline Irreducibility &$\delta\in[1,2]$&$\delta\in[1,2]$  \\
			\hline Strong Feller &$\delta\in[1,\infty)$ with $ \beta\nu>2^{2(\delta-1)}\alpha^2$ for $\delta\in(2,\infty)$ &$\delta\in[1,\infty)$ with $ \beta\nu>2^{2(\delta-1)}\alpha^2$ for $\delta\in(2,\infty)$  \\
			\hline Uniqueness of invariant measure  &$\delta\in[1,2)$&$\delta\in[1,2]$  \\
			\hline LDP&-- &$\delta\in[1,2]$   \\
			\hline
		\end{tabular}
		\caption{Assumptions on noise and restrictions on $\delta$.}
	\end{table}
	
	The article is organized as follows. In section \ref{sec2}, we define linear and nonlinear operators and the necessary function space needed to obtain the solvability and LDP results for our model. Then we provide the abstract formulation of SGBH equation (see \eqref{2.2}) perturbed by the non-degenerate additive noise and discuss the existence and uniqueness of mild as well as strong solutions (Theorems \ref{thm2.6} and \ref{thm2.7}). The existence and uniqueness of invariant measures for our model is discussed in section \ref{sec3}. Under assumption \eqref{1.3} and $1\leq\delta<2$, we followed similar arguments  as in \cite{GDJZ} for the existence of   invariant measure  (Theorem \ref{thEx2}). Using energy equality (It\^o's formula), we obtained the existence of invariant measure under the assumption \eqref{13} and $1\leq\delta<\infty$ (Theorem \ref{thEx1}). Then, we discussed  two properties of the Markov semigroup associated with the solutions of SGBH equations, that is, irreducibility   and strong Feller property (Propositions \ref{prop4.1} and \ref{prop4.3}).  For the  proof of strong Feller property we followed the book \cite{GDJZ} and irreducibility we borrowed ideas from \cite{GDJZ} and  \cite{RWJXLX}. We stated our main result of  Donsker-Varadhan type LDP of occupation measures for the solution of SGBH equation in section \ref{sec5} (Theorem \ref{thrm5.1} and Corollary \ref{cor4.2}) with the help of exponential estimates for the strong solution of SGBH equation (Proposition \ref{prop4.10}) and the hyper-exponential recurrence given in \cite{LW1}.

	\section{Mathematical Formulation}\label{sec2}\setcounter{equation}{0}
	This section provides the necessary function spaces needed to obtain the major  results of this paper. 
	\subsection{Function spaces} Let us fix $\mathcal{O}=(0,1)$. Let $\C_{0}^{\infty}(\1)$ denote the space of all infinitely differentiable functions having compact support in $\1$. The Lebesgue spaces are denoted by $\L^{p}(\1)$ for $p\in [1,\infty]$, and the norm in $\L^p(\1)$ is denoted by $\|\cdot\|_{\L^p}$ and for $p=2$, the inner product in $\L^2(\1)$ is denoted by $(\cdot,\cdot)$. We denote the Sobolev spaces by $\H^{k}(\1)$. Let $\H_0^1(\1)$ denote the closure of $\C_{0}^{\infty}(\1)$ in $\H^1$-norm. As we are working in $\1$ (bounded domain) by using Poincar\'e inequality, the norm $(\|\cdot\|_{\L^2}^2+\|\partial_{\xi}\cdot\|_{\L^2}^2)^{\frac{1}{2}}$ is equivalent to the seminorm $\|\partial_{\xi}\cdot\|_{\L^2}$ and hence $\|\partial_{\xi}\cdot\|_{\L^2}$ defines a norm on $\H_0^1(\1)$. We have the continuous embedding $\H_0^1(\1) \subset \L^2(\1) \subset \H^{-1}(\1)$, where $\H^{-1}(\1)$ is the dual space of $\H_0^1(\1)$. For the bounded domain $\1$ the embedding  $\H_0^1(\1) \subset \L^2(\1)$ is compact. The duality pairing between $\H_0^1(\1)$ and its dual $\H^{-1}(\1)$, and $\L^p(\mathcal{O})$ and its dual $\mathrm{L}^{\frac{p}{p-1}}$ are   denoted by $\langle \cdot, \cdot \rangle$. In one dimension, we have the continuous embedding: $\H_0^1(\1) \subset \L^{\infty}(\1) \subset \L^p(\1)$, for $p\in [1,\infty)$. Also the embedding of $\H^{\sigma}(\1) \subset \L^p(\1)$ is compact for any $\sigma> \frac{1}{2}-\frac{1}{p}$, for $p\geq 2$. The following interpolation inequality will be used frequently in the paper. 
	Assume that $1\leq p \leq r\leq q\leq \infty$ and $\frac{1}{r}=\frac{\alpha}{p}+\frac{1-\theta}{q}$. For $u\in\L^p(\mathcal{O})\cap\L^q(\mathcal{O})$, we have  $u\in\L^r(\mathcal{O})$, and \begin{align*}
		\|u\|_{\L^r}\leq \|u\|_{\L^p}^{\theta}\|u\|_{\L^q}^{1-\theta}.
	\end{align*}The following fractional form of Gagliardo-Nirenberg inequality (see \cite{CMLP} and \cite{LN}) is also used in the sequel. 	Fix $1\leq q, \; l\leq \infty$ and a natural number $n$. Suppose also that a real number $\theta$ and a non-negative number $j$ are such that 
\begin{align*}
		\frac{1}{p} = \frac{j}{n}+\bigg(\frac{1}{l}-\frac{m}{n}\bigg)\theta + \frac{1-\theta}{q}, \;\; \frac{j}{m}\leq \theta \leq 1, 
	\end{align*} then we have 
	\begin{align*}
		\|\D^{j}u\|_{\L^{p}} \leq C\|\D^{m}u\|^{\theta}_{\L^{l}}\|u\|^{1-\theta}_{\L^{q}},
	\end{align*} 
	for all $u\in\mathbb{W}^{m,l}(\mathcal{O})\cap\H_0^1(\mathcal{O})$.

	\subsection{Linear operator}\label{subsec2.2} Let $\A$ denote the self-adjoint operator and unbounded operator on $\L^2(\1)$ defined by 
	\begin{align*}
		\A u:= -\frac{\partial^2 u }{\partial \xi^2},
	\end{align*}
	with domain $\D(\A)= \H^2(\1) \cap \H_0^1(\1)= \{u\in \H^2(\1):u(0)=u(1)=0\}$. The eigenvalues and the corresponding eigenfunctions of $\A$ are given by 
	\begin{align*}
		\lambda_k = k^2\pi^2 \ \text{ and } \ e_k(\xi) = \sqrt{\frac{2}{\pi}} \sin(k\pi x), \; k=1,2\ldots	
	\end{align*}
	As we are working in the bounded domain $\1$, the inverse of $\A$, that is, $\A^{-1}$ exists and is a compact operator on $\L^2(\1)$. Moreover, we can define the fractional powers of $\A$ and 
	\begin{align*}
		\|\A^{\frac{1}{2}}u\|_{\L^2}^2=\sum_{j=1}^{\infty}|(u,e_j)|^2 \geq \lambda_1\sum_{j=1}^{\infty}|(u,e_j)|^2 = \lambda_1\|u\|_{\L^2}^2 = \pi^2\|u\|_{\L^2}^2,
	\end{align*} 
	which is the Poincar\'e inequality. An integration by parts yields 
	\begin{align*}
		(\A u , v) = (\partial_{\xi} u , \partial_{\xi} v) =: a(u,v), \ \text{ for all } \ v \in \H_0^1 (\1),
	\end{align*} 
	so that $\A: \H_0^1(\1)\to \H^{-1}(\1)$. Let us define the operator $\A_p = -\frac{\partial^2}{\partial \xi^2}$ with $\D(\A_p)= \W_0^{1,p}(\1)\cap \W^{2,p}(\1)$, for $1<p<\infty$ and $\D(\A_1)= \{u\in \W^{1,1}(\1): u \in \L^1(\1)\}$, for $p=1$. From Proposition 4.3, Chapter 1 \cite{VB1, AP}, we know that for $1\leq p<\infty$, $\A_p$ generates an analytic semigroup of contractions in $\L^p(\1)$.
	\subsection{Nonlinear operators}\label{subsec2.3} We define two nonlinear operators in this subsection.
	\subsubsection{The operator $\B(\cdot)$.}
	Let us define $b:\H_0^1(\1)\times\H_0^1(\1)\times \H_0^1(\1) \to \R$ as 
	\begin{align*}
		b(u,v,w) = \int_{0}^{1} (u(\xi))^{\delta}\frac{\partial v(\xi)}{\partial x} w(\xi)\d \xi.
	\end{align*} 		
	Using an integration by parts and boundary conditions, it is immediate  that 
	\begin{align*}
		b(u,u,u) = (u^{\delta}\partial_{\xi} u , u ) =\int_{0}^{1} (u(\xi))^{\delta}\frac{\partial u(\xi)}{\partial \xi}u(\xi) \d \xi = \frac{1}{\delta+2}\int_{0}^{1} \frac{\partial}{\partial \xi}(u(\xi))^{\delta+2}\d \xi =0,
	\end{align*} and 
	\begin{align*}
		b(u,u,v) = -\frac{1}{\delta+1}b(u,v,u),
	\end{align*} for all $u,v \in \H_0^1(\1)$. In general, for all $p>2$ and $u \in \H_0^1(\1)$, using integration by parts one can easily show that
	\begin{align}\label{2.1}
		b(u,u,|u|^{p-2}u)= (u^{\delta}\partial_{\xi} u , |u|^{p-2}u)=0,
	\end{align} for all $p\geq 2$ and $u \in \H_0^1(\1)$. For $w \in \L^2(\1)$, we define $\B(\cdot,\cdot):\H_0^1(\1)\times \H_0^1(\1)\to \L^2(\1)$ by 
	\begin{align*}
		(\B(u,v),w) = b(u,v,w) \leq \|u\|_{\L^{\infty}}^{\delta} \|\partial_{\xi} v\|_{\L^2}\|w\|_{\L^2} \leq \|u\|_{\H_0^1}^{\delta}\|v\|_{\H_0^1}\|w\|_{\L^2},
	\end{align*} so that $\|\B(u,v)\|_{\L^2}\leq \|u\|_{\H_0^1}\|v\|_{\H_0^1}$. We set $\B(u)=\B(u,u)$, so that we can easily obtain $\|\B(u)\|_{\L^2} \leq \|u\|_{\H_0^1}^{\delta+1}$. One can  show that the operator $\B(\cdot)$ is a locally Lipschitz operator, that is, (cf. \cite{MTMAK})
	\iffalse 
	\begin{align*}
		(\B&(u)-\B(v),w) = (u^{\delta}\partial_{\xi}u-v^{\delta}\partial_{\xi}v,w) \\&= (u^{\delta}\partial_{\xi}(u-v),w)+((u^{\delta}-v^{\delta})\partial_{\xi}v,w) \\&= -\frac{\delta}{2}(u^{\delta-1}\partial_{\xi}u,w^2)+ \delta((\theta u+(1-\theta)v)^{\delta-1}\partial_{\xi}v,w^2) \\&
		\leq \frac{\delta}{2}\|u\|_{\L^{\infty}}^{\delta-1}\|\partial_{\xi}u\|_{\L^2}\|w\|_{\L^2}\|w\|_{\L^{\infty}}+2^{\delta}\delta(\|u\|_{\L^{\infty}}^{\delta-1}+\|v\|_{\L^{\infty}}^{\delta-1})\|\partial_{\xi}v\|_{\L^2}\|w\|_{\L^2}\|w\|_{\L^{\infty}},
	\end{align*}thus we obtain 
	\fi 
	\begin{align}\label{BLL}
		\|\B(u)-\B(v)\|_{\L^2}&\leq    C\delta(1+2^{\delta})r^{\delta}\|u-v\|_{\H_0^1},
	\end{align}for $\|u\|_{\H_0^1},\|v\|_{\H_0^1}\leq r$. 
	
	\subsubsection{The operator $\c(\cdot)$.}
	Let us define the operator $\c:\H_0^1(\1)\to \L^2(\1)$ by
	$\c(u)=u(1-u^{\delta})(u^{\delta}-\gamma)$. It is easy to compute that
	\begin{align*}
		(\c(u),u)& = (u(1-u^{\delta})(u^{\delta}-\gamma),u)=((1+\gamma)u^{\delta+1}-\gamma u - u^{2\delta+1},u)
		\\& = (1+\gamma)(u^{\delta+1},u)-\gamma \|u\|_{\L^2}^2-\|u\|_{\L^{2(\delta+1)}}^{2(\delta+1)},
	\end{align*} for all $u\in \L^{2(\delta+1)}(\1) \subset \H_0^1(\1)$. The  operator $\c(\cdot)$ is also a locally Lipschitz, that is, (cf. \cite{MTMAK})
	\begin{align}\label{CLL}
		&\|\c(u)-\c(v)\|_{\L^2}  \leq
		\frac{C}{\pi}((1+\gamma)(1+\delta)2^{\delta}r^{\delta}+\gamma+(1+2\delta)2^{2\delta}r^{2\delta})\|u-v\|_{\H_0^1},
	\end{align}for $\|u\|_{\H_0^1},\|v\|_{\H_0^1} \leq r$.

	\subsection{Linear problem} Let us first consider the following stochastic heat equation: 
	\begin{equation}\label{2p2}
		\left\{
		\begin{aligned}
			\d z(t)&=-\nu\A z(t)\d t +\G\d\W(t), \; t\in(0,T),\\
			z(0)&=0,
		\end{aligned}
		\right.
	\end{equation}where $\G$ satisfies the assumption \eqref{1.3}. Then, from Chapter 5, \cite{DaZ}, we infer that the solution of \eqref{2p2} is unique and it can defined by the stochastic convolution
	\begin{align*}
		z(t)= \int_{0}^{t}R(t-s)\G\d\W(s),
	\end{align*}where $R(t)=e^{-\nu t\A}$. Also note the process $z$ is a Gaussian process and it is mean square continuous taking values in $\L^2(\1)$ and $z$ has a version, which has $\P$-a.s., $\lambda$-H\"older continuous paths w.r.t. $(t,\xi)\in [0,T]\times[0,1]$ for any $\lambda\in(0,\frac{1}{2})$	(for more details see Theorem 5.22, \cite{DaZ}). Under the assumption \eqref{13}, one can show that $z\in\C(0,T;\L^2(\mathcal{O}))\cap\mathrm{L}^2(0,T;\H_0^1(\mathcal{O}))$, $\mathbb{P}$-a.s.
	
	Since $R(t)=e^{-\nu t\A}$ is an analytic semigroup, we infer that $R(t):\L^p(\mathcal{O})\to\L^q(\mathcal{O})$ is a bounded map whenever $1\leq p\leq q\leq \infty$ and $t>0$, and there exists a constant $C$ depending on $p,q$ and $\nu$ such that (see Lemma 3, Part I, \cite{FR})
	\begin{align}
		\|R(t)f\|_{\L^q}&\leq Ct^{-\frac{1}{2}\left(\frac{1}{p}-\frac{1}{q}\right)}\|f\|_{\L^p},\label{1.6}\\
		\|\partial_{\xi} R(t)f\|_{\L^q}&\leq Ct^{-\frac{1}{2}-\frac{1}{2}\left(\frac{1}{p}-\frac{1}{q}\right)}\|f\|_{\L^p},\label{1.5}
	\end{align}
	for all $t\in(0,T]$ and $f\in\L^p(\mathcal{O})$. Moreover, we have 
	\begin{align}\label{1.7}
		\|\A^{\sigma}R(t)\|_{\mathcal{L}(\L^p)}\leq\frac{C}{t^{\sigma}},
	\end{align}
	for any $p\geq 1$	and $t\in(0,T]$.
	
	\subsection{Mild solution} In this subsection,  we provide the definition of mild solution and we state a result form \cite{MTMSGBH}, where the existence and uniqueness of a global mild solution to the SGBH equation \eqref{1p1} is established. One can re-write the abstract formulation of the problem \eqref{1p1} as
	\begin{equation}\label{2.2}
		\left\{
		\begin{aligned}
			\d  u(t)&=\{-\nu\A u(t)-\alpha\B (u(t))+\beta \c(u(t))\}\d t + \G\d \W(t), \ t\in(0,T), \\
			u(0)&= x,
		\end{aligned}
		\right.
	\end{equation}
	where $x\in\L^p(\mathcal{O})$, for $p>\delta$. 
	\begin{definition}\label{def2.1}
		An $\L^p(\1)$-valued and $\mathscr{F}_t$-adapted stochastic process $u:[0,\infty)\times[0,1]\times\Omega \to \R $ with $\P$-a.s continuous trajectories on $t\in[0,T]$, is a mild solution to \eqref{2.2}, if for any $T>0$, $u(t):=u(t,\cdot,\cdot)$ satisfies the following integral equation:\begin{align}\label{2.3}
			u(t)=R(t)x-\alpha\int_{0}^{t}R(t-s)\B(u(s))\d s +\beta\int_{0}^{t}R(t-s)\c(u(s))\d s +\int_{0}^{t}R(t-s)\G\d\W(s),
		\end{align} $\P$-a.s., for all $t\in[0,T]$.
	\end{definition} 
	
	\begin{theorem}{\cite{MTMSGBH}}\label{thrm2.2}
		Let the $\mathscr{F}_0$-measurable initial data $x$ be given and $x \in \L^p(\1)$ for $p>\delta$, $ \P$-a.s. Then there exists a unique mild solution of \eqref{2.2}, which belongs to $\C([0,T];\L^p(\1)),$ for $p>\delta$, $\P$-a.s.
	\end{theorem}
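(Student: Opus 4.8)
The plan is to obtain $u$ as a fixed point of the integral map defined by the right-hand side of \eqref{2.3}, using the smoothing of the heat semigroup $R(t)=e^{-\nu t\A}$ to absorb both the derivative loss in the Burgers term $\B$ and the superlinear growth of the reaction $\c$. First I would isolate the stochastic part: writing $z(t)=\int_0^t R(t-s)\G\,\d\W(s)$ for the stochastic convolution solving \eqref{2p2} and setting $v:=u-z$, the problem \eqref{2.2} becomes the random evolution equation
\[
v(t)=R(t)x-\alpha\int_0^t R(t-s)\B(v(s)+z(s))\,\d s+\beta\int_0^t R(t-s)\c(v(s)+z(s))\,\d s,
\]
to be solved pathwise for $\P$-almost every $\omega$. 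By the regularity recalled after \eqref{2p2}, $z$ has ($\P$-a.s.) continuous paths in $\L^p(\1)$ for every finite $p$, so it suffices to construct a continuous $\L^p$-valued solution $v$ and then set $u=v+z$.

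For \textbf{local existence} I would run a contraction argument on a ball of $\C([0,T_0];\L^p(\1))$. Writing the Burgers term in divergence form, $\B(w)=\frac{1}{\delta+1}\partial_\xi(w^{\delta+1})$, the gradient estimate \eqref{1.5} applied with the exponents $\frac{\delta+1}{p}\to\frac1p$ gives
\[
\Big\|\int_0^t \partial_\xi R(t-s)\,w^{\delta+1}(s)\,\d s\Big\|_{\L^p}\le C\int_0^t (t-s)^{-\frac12-\frac{\delta}{2p}}\|w(s)\|_{\L^p}^{\delta+1}\,\d s,
\]
whose kernel is integrable precisely because $p>\delta$ forces $\frac12+\frac{\delta}{2p}<1$; similarly \eqref{1.6} applied to the degree-$(2\delta+1)$ polynomial $\c$ produces the kernel $(t-s)^{-\delta/p}$, again integrable exactly when $p>\delta$. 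Combining these with the local Lipschitz bounds \eqref{BLL} and \eqref{CLL} (recast on the $\L^p$ scale) shows that the map sends a suitable ball of $\C([0,T_0];\L^p)$ into itself and is a strict contraction once $T_0$ is small, the smallness depending only on $\|x\|_{\L^p}$ and on $\sup_{[0,T]}\|z(s)\|_{\L^p}$. The Banach fixed point theorem then yields a unique local mild solution.

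To pass from local to \textbf{global existence} I would combine a blow-up alternative with an a priori bound. Introducing the stopping times $\tau_N=\inf\{t:\|v(t)\|_{\L^p}\ge N\}$ and the maximal time $\tau_\infty=\lim_N\tau_N$, continuation can fail only if $\|v(t)\|_{\L^p}$ explodes as $t\uparrow\tau_\infty$. To rule this out I would test the equation for $v$ with $|v|^{p-2}v$ and use the cancellation \eqref{2.1}, namely $b(v,v,|v|^{p-2}v)=0$, to dispose of the leading Burgers contribution, while the sign-definite highest-order part of the reaction term (contributing $-\|v\|_{\L^{2\delta+p}}^{2\delta+p}$) supplies the dissipation needed to close a Gronwall-type inequality; the remaining cross terms in $z$ are absorbed using the pathwise $\L^\infty$-in-space bound on $z$. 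This gives $\sup_{[0,T]}\|v(t)\|_{\L^p}<\infty$ $\P$-a.s., hence $\tau_\infty=T$ and the solution is global. \textbf{Uniqueness} follows on each $[0,\tau_N]$ from \eqref{BLL}--\eqref{CLL} and Gronwall's inequality, and is then propagated to $[0,T]$ by letting $N\to\infty$.

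The hard part is the global a priori estimate. Because $\B$ loses a derivative and $\c$ grows like $u^{2\delta+1}$, the energy identity for $\|v\|_{\L^p}^p$ produces delicate terms mixing $v$ with the merely Hölder-regular convolution $z$, and these must be controlled by interpolation against the parabolic dissipation from $-\nu(\A v,|v|^{p-2}v)$ and the reaction dissipation, without circularity. The exact cancellation \eqref{2.1} is indispensable here, since without it the Burgers nonlinearity is not manifestly controllable at the $\L^p$ level, and it is precisely the restriction $p>\delta$ that simultaneously renders the two singular convolution kernels integrable.
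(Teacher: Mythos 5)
The paper does not prove Theorem \ref{thrm2.2}: it is quoted verbatim from \cite{MTMSGBH}, whose proof the introduction describes as resting on fixed point and stopping time arguments after subtracting the stochastic convolution — exactly the skeleton of your proposal (contraction in $\C([0,T_0];\L^p(\1))$ via the smoothing estimates \eqref{1.5}--\eqref{1.6} with the singularity integrable precisely when $p>\delta$, then a blow-up alternative closed by an $\L^p$ a priori bound exploiting the cancellation \eqref{2.1}). So your route coincides with the cited one as far as this paper records it.
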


	\subsection{Strong solution}
	Let us now discuss the existence and uniqueness of strong solution to the system under the assumption \eqref{13}. 
	\begin{definition}[Strong solution]\label{def2.3}
		Let $x\in \L^2(\1)$ be given. An $\L^2(\1)$-valued $\{\mathscr{F}\}_{t\geq 0}$-adapted stochastic process $u(\cdot)$ is called strong solution to the system \eqref{2.2} if 
		\begin{itemize}
			\item the process $$u \in \L^2(\Omega;\L^{\infty}(0,T;\L^2(\1))\cap \L^2(0,T;\H_0^1(\mathcal{O})))\cap \L^{2(\delta+1)}(\Omega; \L^{2(\delta+1)}(0,T;\L^{2(\delta+1)}(\mathcal{O})))$$ and $u \in \C([0,T];\L^2(\1))\cap\L^2(0,T;\H_0^1(\mathcal{O}))\cap\L^{2(\delta+1)}(0,T;\L^{2(\delta+1)}(\mathcal{O})),\; \P$-a.s.,
			\item the process $u$ satisfies 
			\begin{align*}
				(u(t),\varphi)&=(x,\varphi)+\int_0^t\langle -\nu\A u(s)-\alpha\B (u(s))+\beta \c(u(s)) ,\varphi\rangle\d s+\int_0^t(\G\d\W(s),\varphi),
			\end{align*}
			for all $t\in[0,T]$ and $\varphi\in\H_0^1(\mathcal{O})$, $\mathbb{P}$-a.s., 
			\item the following energy equality is satisfied: 
			\begin{align*}
				&\|u(t)\|_{\L^2}^2+2\nu\int_0^t\|\partial_{\xi}u(s)\|_{\L^2}^2\d s+2\beta\gamma\int_0^t\|u(s)\|_{\L^2}^2\d s+2\beta\int_0^t\|u(s)\|_{\L^{2(\delta+1)}}^{2(\delta+1)}\d s\nonumber\\&\quad=\|x\|_{\L^2}^2+2\beta(1+\gamma)\int_0^t(u^{\delta+1}(s),u(s))\d s+\mathrm{Tr}(\G\G^*)t+2\int_0^t(\G\d\W(s),u(s))\d s,
			\end{align*}
			for all $t\in[0,T]$, $\mathbb{P}$-a.s.
		\end{itemize}
	\end{definition}
	For $x\in\L^2(\1)$, in order to prove the existence of mild solution and strong solution under the assumptions \eqref{1.3} and \eqref{13}, respectively, let us first set \begin{align*}
		v(t):=u(t)-z(t), \; t\geq0.
	\end{align*} Then $u(\cdot)$ is a solution of \eqref{2.2} if and only if $v(t)$ is a solution of 
	\begin{equation}\label{2.4}
		\left\{
		\begin{aligned}
			\frac{\d  v(t)}{\d t}&=-\nu\A v(t)-\alpha\B(v(t)+z(t))+\beta \c(v(t)+z(t)),\ t\in(0,T), \\
			v(0)&= x,
		\end{aligned}
		\right.
	\end{equation} which is, for fixed $\omega\in\Omega$, a deterministic system. One can rewrite \eqref{2.4} in the mild form as 
	\begin{align}\label{2.5} 
		v(t)= &R(t)x-\alpha\int_{0}^{t}R(t-s)\B(v(s)+z(s))\d s +\beta\int_{0}^{t}R(t-s)\c(v(s)+z(s))\d s,	
	\end{align} then if $v\in\C([0,T];\L^2(\1))\cap\L^{2}(0,T;\H_0^1(\1))\cap\L^{2(\delta+1)}(0,T;\L^{2(\delta+1)}(\1))$ satisfies \eqref{2.5}, we say that it is a weak solution of \eqref{2.4}. For each dixed $\omega\in\Omega$, the weak form of \eqref{2.4} can be written as 
	\begin{align*}
		(v(t),\varphi)&=(x,\varphi)+\int_0^t\langle -\nu\A v(s)-\alpha\B(v(s)+z(s))+\beta \c(v(s)+z(s)),\varphi\rangle \d s,
	\end{align*}
	for all $t\in[0,T]$ and $\varphi\in\H_0^1(\mathcal{O})$. The next result provides the existence of weak solution to the system \eqref{2.4}. 
	
	\begin{theorem}\label{thm2.4}
		Let $x\in\L^2(\1)$ be the given initial data. Then there exists a weak solution $v\in\C([0,T];\L^2(\1))\cap\L^{2}(0,T;\H_0^1(\1))\cap\L^{2(\delta+1)}(0,T;\L^{2(\delta+1)}(\1))$ to the system \eqref{2.4}.
		
		For $\delta\in[1,2]$, the weak solution is unique for any $\nu,\alpha,\beta>0,\gamma\in(0,1)$ and for $2<\delta<\infty$, the weak solution is unique for $\beta\nu>2^{2(\delta-1)}\alpha^2$.
	\end{theorem}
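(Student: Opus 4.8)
The plan is to construct the weak solution by a Faedo--Galerkin scheme, exploiting that for fixed $\omega\in\Omega$ the path $z=z(\cdot,\omega)$ is a \emph{given} function lying in $\C(0,T;\L^2(\1))\cap\L^2(0,T;\H_0^1(\1))$ and, by the H\"older continuity of $z$ recorded after \eqref{13}, also in $\L^\infty((0,T)\times(0,1))$; thus \eqref{2.4} is a deterministic evolution equation whose forcing enters only through the nonlinearities. First I would fix the orthonormal eigenbasis $\{e_k\}_{k\geq1}$ of $\A$, let $\Pi_n$ be the $\L^2$-orthogonal projection onto $\mathrm{span}\{e_1,\dots,e_n\}$, and seek $v_n(t)=\sum_{k=1}^n c_k^n(t)e_k$ solving $\frac{\d}{\d t}v_n=-\nu\A v_n-\alpha\Pi_n\B(v_n+z)+\beta\Pi_n\c(v_n+z)$ with $v_n(0)=\Pi_n x$. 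Since $\B$ and $\c$ are locally Lipschitz (see \eqref{BLL}, \eqref{CLL}), this finite-dimensional ODE system has a unique local solution, and the a priori bounds below extend it to $[0,T]$.

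The core of the existence argument is a uniform energy estimate. Taking the inner product of the projected equation with $v_n$ gives
\begin{align*}
\frac{1}{2}\frac{\d}{\d t}\|v_n\|_{\L^2}^2+\nu\|\partial_\xi v_n\|_{\L^2}^2=-\alpha(\B(v_n+z),v_n)+\beta(\c(v_n+z),v_n).
\end{align*}
For the convection term I would exploit the cancellation $(\B(u),u)=b(u,u,u)=0$: writing $u_n:=v_n+z$ one has $(\B(u_n),v_n)=(\B(u_n),u_n)-(\B(u_n),z)=-b(u_n,u_n,z)=\frac{1}{\delta+1}(u_n^{\delta+1},\partial_\xi z)$, which is bounded by $\|u_n\|_{\L^{2(\delta+1)}}^{\delta+1}\|\partial_\xi z\|_{\L^2}$ and absorbed, via Young's inequality, into the $\L^{2(\delta+1)}$-dissipation coming from the reaction term. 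For the reaction term I write $(\c(u_n),v_n)=(\c(u_n),u_n)-(\c(u_n),z)$ and use $(\c(u),u)=(1+\gamma)(u^{\delta+1},u)-\gamma\|u\|_{\L^2}^2-\|u\|_{\L^{2(\delta+1)}}^{2(\delta+1)}$: the last term is a good negative contribution supplying $\L^{2(\delta+1)}$-dissipation, while $(1+\gamma)(u^{\delta+1},u)\leq\|u\|_{\L^{2(\delta+1)}}^{\delta+1}\|u\|_{\L^2}$ and the correction $(\c(u_n),z)$ are split by Young, using the $\L^\infty$-bound on $z$, so that a fraction of the $\L^{2(\delta+1)}$-norm is retained on the left and the remainder is handled by Gronwall. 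Collecting terms yields bounds, uniform in $n$, for $v_n$ in $\L^\infty(0,T;\L^2(\1))\cap\L^2(0,T;\H_0^1(\1))\cap\L^{2(\delta+1)}(0,T;\L^{2(\delta+1)}(\1))$.

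With these bounds I would pass to the limit by compactness. The estimate $\|\B(u)\|_{\L^2}\leq\|u\|_{\H_0^1}^{\delta+1}$ together with the growth of $\c$ furnishes a uniform bound on $\frac{\d}{\d t}v_n$ in $\L^{p'}(0,T;\H^{-s})$ for suitable $s,p'$, so the Aubin--Lions--Simon lemma provides a subsequence converging strongly in $\L^2(0,T;\L^2(\1))$ and weakly in the energy spaces. Strong $\L^2$ convergence, combined with the uniform $\L^{2(\delta+1)}$ bound and interpolation, suffices to identify the limits of $\B(v_n+z)$ and $\c(v_n+z)$, so the limit $v$ is a weak solution; the membership $v\in\C([0,T];\L^2(\1))$ then follows by the standard argument that weak continuity into $\L^2$ is upgraded to strong continuity through the corresponding energy equality.

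For uniqueness, given two solutions $v_1,v_2$ I set $w=v_1-v_2=u_1-u_2$ (with $u_i=v_i+z$) and test the difference equation with $w$. The reaction difference is favourable: the monotone term gives $-\beta(u_1^{2\delta+1}-u_2^{2\delta+1},w)\leq0$ (extra $\L^{2(\delta+1)}$-control of $w$), the linear term gives $-\beta\gamma\|w\|_{\L^2}^2$, and $\beta(1+\gamma)(u_1^{\delta+1}-u_2^{\delta+1},w)$ is absorbed by Young. The decisive contribution is the convection difference, which after $(\B(u_1)-\B(u_2),w)=-\frac{1}{\delta+1}(u_1^{\delta+1}-u_2^{\delta+1},\partial_\xi w)$ and $|u_1^{\delta+1}-u_2^{\delta+1}|\leq(\delta+1)(|u_1|^\delta+|u_2|^\delta)|w|$ is estimated by $(\|u_1\|_{\L^{2(\delta+1)}}^{\delta}+\|u_2\|_{\L^{2(\delta+1)}}^{\delta})\|w\|_{\L^{2(\delta+1)}}\|\partial_\xi w\|_{\L^2}$. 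Here $\|w\|_{\L^{2(\delta+1)}}$ and $\|\partial_\xi w\|_{\L^2}$ are precisely the two dissipative quantities available on the left, and \textbf{the main obstacle} is to absorb this product while keeping a positive coefficient in front of the dissipation. A double use of Young's inequality shows this is possible exactly when the diffusion and reaction dissipation together dominate the convection, which for $2<\delta<\infty$ forces $\beta\nu>2^{2(\delta-1)}\alpha^2$; for $\delta\in[1,2]$ the interpolation exponents are favourable enough that the term closes against $\nu\|\partial_\xi w\|_{\L^2}^2$ alone and no parameter restriction is needed. In either case one reaches $\frac{\d}{\d t}\|w\|_{\L^2}^2\leq C(t)\|w\|_{\L^2}^2$ with $C\in\L^1(0,T)$ (the integrability coming from $\|u_i\|_{\L^{2(\delta+1)}}^{2(\delta+1)}$), and Gronwall's inequality with $w(0)=0$ gives $w\equiv0$.
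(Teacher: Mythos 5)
Your existence argument (Galerkin approximation, the $\L^2$-energy estimate exploiting $b(u,u,u)=0$, Aubin--Lions compactness, identification of the nonlinear limits, and the mollification/energy-equality upgrade to $\C([0,T];\L^2(\1))$) is essentially the paper's Steps (1)--(6), and your uniqueness argument for $\delta\in[1,2]$ (Gagliardo--Nirenberg on $\|w\|_{\L^{2(\delta+1)}}$, Young against $\nu\|\partial_\xi w\|_{\L^2}^2$, Gronwall with coefficient $\|u_i+z\|_{\L^{2(\delta+1)}}^{4\delta(\delta+1)/(\delta+2)}$, integrable precisely when $\delta\leq 2$) is the first half of the paper's Step (7). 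One caveat on existence: by integrating the convection term by parts onto $z$ you need $\partial_\xi z\in\L^2(0,T;\L^2(\1))$, which holds under \eqref{13} but not under the weaker assumption \eqref{1.3}; the paper's estimate \eqref{2.9} keeps the derivative on $v_n$ and touches $z_n$ only through $\|z_n\|_{\L^{2(\delta+1)}}$, which is what allows Theorem \ref{thm2.4} to feed into the mild-solution Theorem \ref{thm2.7} as well.

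The genuine gap is in the uniqueness argument for $2<\delta<\infty$. Once you H\"older-split the convection difference into $\left(\|u_1\|_{\L^{2(\delta+1)}}^{\delta}+\|u_2\|_{\L^{2(\delta+1)}}^{\delta}\right)\|w\|_{\L^{2(\delta+1)}}\|\partial_\xi w\|_{\L^2}$, the only reaction dissipation of $w$ expressible through $\|w\|_{\L^{2(\delta+1)}}$ is $-c\beta\|w\|_{\L^{2(\delta+1)}}^{2(\delta+1)}$, which is of degree $2(\delta+1)$ in $w$, whereas the convection contribution is quadratic in $w$. The second Young step you invoke, $\|u_i\|_{\L^{2(\delta+1)}}^{2\delta}\|w\|_{\L^{2(\delta+1)}}^{2}\leq\epsilon\|w\|_{\L^{2(\delta+1)}}^{2(\delta+1)}+C\|u_i\|_{\L^{2(\delta+1)}}^{2(\delta+1)}$, therefore leaves a remainder carrying no factor of $w$; the resulting differential inequality $\frac{\d}{\d t}\|w\|_{\L^2}^2\leq C_1(t)+C_2(t)\|w\|_{\L^2}^2$ with $C_1\not\equiv 0$ does not yield $w\equiv 0$ by Gronwall, and the threshold $\beta\nu>2^{2(\delta-1)}\alpha^2$ cannot emerge from this route. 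The paper's second half of Step (7) avoids the H\"older split altogether: the convection is bounded by $2^{\delta-1}\alpha\left(\||u_1|^{\delta}w\|_{\L^2}+\||u_2|^{\delta}w\|_{\L^2}\right)\|\partial_\xi w\|_{\L^2}$, and the monotonicity inequality \eqref{U6} shows that the reaction term supplies exactly $-\frac{\beta}{2}\||u_i|^{\delta}w\|_{\L^2}^2$, a quantity quadratic in $w$; a single Young step then absorbs everything into the left-hand side precisely when $\beta\nu>2^{2(\delta-1)}\alpha^2$, and Gronwall closes with $w(0)=0$. You need to keep the weighted quantity $\||u_i|^{\delta}w\|_{\L^2}$ intact for this case.
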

	\begin{proof} The proof is divided into the following steps.
		\vskip 0.2 cm
		\noindent\textbf{Step (1):} \emph{Faedo-Galerkin approximation}. 
		Let $\H_n= \mathrm{span}\{e_1,\ldots,e_n\}$, where  $\{e_1,\ldots,e_n,\ldots\}$ be a complete orthogonal system of eigenfunctions of the Laplacian operator $\A=-\frac{\partial^2}{\partial x^2}$ and let $\Pi_n:\L^2(\1)\to\H_n$ be the orthogonal projection operator, that is, $x_n=\Pi_n x = \sum_{j=1}^{n}(x,e_j)$, for $x\in\L^2(\1)$. We define 
		\begin{align*}
			\B_n(u)= \Pi_n \B(\Pi_n u), \; \c_n(u) = \Pi_n\c_n(\Pi_nu),\; \G_n=\Pi_n\G\Pi_n.
		\end{align*} 
		\iffalse 
		Let us first consider the following Galerkin approximated system in $\H_n$:
		\begin{equation}\label{2.6}
			\left\{
			\begin{aligned}
				\d  u_n(t)&=\{-\nu\A u_n(t)-\alpha\B_n (u_n(t))+\beta \c_n(u_n(t))\}\d t + \G_n \d \W(t), \; t\in(0,T), \\
				u_n(0)&= x_n,
			\end{aligned}
			\right.
		\end{equation}
		This is finite dimensional version of the system \eqref{2.2}. Also note that $\G_n\W(t)$ is an $n$-dimensional Wiener process with incremental covariance $\G_n\G_n^*$. The finite dimensional system \eqref{2.6} has a unique progressively measurable solution with $\P$-a.s. trajectory $u_n \in \C([0,T];\H_n)$ (using the locally Lipschitz properties of the operators $\B(\cdot) \text{ and } \c(\cdot)$ from \eqref{BLL} and \eqref{CLL}, respectively) and using the energy estimate satisfied by $u_n(\cdot)$ by applying the It\^o formula. 
		\fi 
		Let us define $z_n(t):=\int_0^tR(t-s)\G_n\d\W(t)$. Then, it has been shown in Lemma 3.3, \cite{MTMSGBH} that $$z_n\to z\ \text{ in }\ \mathrm{C}([0,T]\times[0,1]).$$	Let us first consider the following finite dimensional system: 
		\begin{equation}\label{2.7}
			\left\{
			\begin{aligned}
				\frac{\d  v_n(t)}{\d t}&=-\nu\A v_n(t)-\alpha\B(v_n(t)+z_n(t))+\beta \c(v_n(t)+z_n(t)), \ t\in(0,T), \\
				v_n(0)&= x_n.
			\end{aligned}
			\right.
		\end{equation} 
		Since $\B(\cdot)$ and $c(\cdot)$ satisfy the locally Lipschitz conditions (see \eqref{BLL} and \eqref{CLL}),  the above system has a unique local solution $v_n\in\C([0,T^*];\H_n)$ for some $0<T^*<T$. Now, we show that the time $T^*$ can be extended to $T$. 
		\vskip 0.2 cm
		\noindent	\textbf{Step (2):} \emph{$\L^2$-energy estimate}.
		Let us take the inner product of \eqref{2.7} with $v_n$ to get 
		\begin{align}\label{2.8}
			\frac{1}{2}\frac{\d }{\d t}\|v_n\|_{\L^2}^2+\nu\|\partial_{\xi} v_n \|_{\L^2}^2 = -\alpha \langle \B(v_n+z_n),v_n\rangle +\beta \langle \c(v_n+z_n),v_n\rangle,
		\end{align} for a.e. $t\in[0,T]$. Now we estimate the right hand side of \eqref{2.8} term by term.
		Using integration by parts, Taylor's formula,  \eqref{2.1}, H\"older's and  Young's inequalities,  we estimate the term  $-\alpha \langle \B(v_n+z_n),v_n\rangle$ as
		\begin{align}\label{2.9} \nonumber
			-&\alpha \langle \B(v_n+z_n),v_n\rangle \nonumber
			\\&=  \frac{\alpha}{\delta+1}(v_n^{\delta+1},\partial_{\xi} v_n)\nonumber
			+\alpha(z_n(\theta_1v_n+(1-\theta_1)z_n)^{\delta},\partial_{\xi} v_n) \nonumber
			\\&= \alpha(z_n(\theta_1v_n+(1-\theta_1)z_n)^{\delta},\partial_{\xi} v_n) \nonumber
			\\& \leq \frac{\nu}{2}\|\partial_{\xi} v_n\|_{\L^2}^2+\frac{\alpha^2}{2\nu}\|z_n(\theta_1v_n+(1-\theta_1)z_n)^{\delta}\|_{\L^2}^2
			\nonumber
			\\& \leq \frac{\nu}{2}\|\partial_{\xi} v_n\|_{\L^2}^2+2^{2(\delta-1)}\frac{\alpha^2}{2\nu}\|z_n\|_{\L^{2(\delta+1)}}^{2}\|v_n\|_{\L^{2(\delta+1)}}^{2\delta} 
			+2^{2(\delta-1)}\frac{\alpha^2}{2\nu}\|z_n\|_{\L^{2(\delta+1)}}^{2(\delta+1)}
			\nonumber
			\\& \leq \frac{\nu}{2}\|\partial_{\xi} v_n\|_{\L^2}^2 +\frac{\beta}{4}\|v_n\|_{\L^{2(\delta+1)}}^{2(\delta+1)}+ \left[\frac{1}{\delta+1}\left(\frac{4\delta}{\beta(\delta+1)}\right)^{\delta}+ \frac{\alpha^22^{2(\delta-1)}}{2\nu}\right]\|z_n\|_{\L^{2(\delta+1)}}^{2(\delta+1)}.
		\end{align}	
		Let us now consider the second term of right hand side of \eqref{2.8}. We estimate using Taylor's formula, H\"older's and Young's inequalities as 
		\begin{align}\label{2.10}
			&\beta(\c(v_n+z_n),v_n)\nonumber\\&= \beta((1+\gamma)(v_n+z_n)^{\delta+1}-\gamma(v_n+z_n)-(v_n+z_n)^{2\delta+1},v_n)\nonumber\\&=\beta(1+\gamma)(v_n^{\delta+1},v_n)+\beta(1+\gamma)(\delta+1)(z_n(\theta_2 v_n+(1-\theta_2)z_n)^{\delta},v_n)\nonumber\\&\quad-\beta\gamma\|v_n\|_{\L^2}^2-\beta\gamma(v_n,z_n)-\beta\|v_n\|_{\L^{2(\delta+1)}}^{2(\delta+1)}-\beta(2\delta+1)(z_n(\theta_3 v_n+(1-\theta_3)z_n)^{2\delta},v_n)\nonumber\\&\leq\beta(1+\gamma)\|v_n\|_{\L^{2(\delta+1)}}^{\delta+1}\|v_n\|_{\L^2}+\beta(1+\gamma)(\delta+1)2^{\delta-1}\|z_n\|_{\L^{2(\delta+1)}}\|v_n\|_{\L^{2(\delta+1)}}^{\delta}\|v_n\|_{\L^2}\nonumber\\&\quad+\beta(1+\gamma)(\delta+1)2^{\delta-1}\|z_n\|_{\L^{2(\delta+1)}}^{(\delta+1)}\|v_n\|_{\L^2}-\beta\gamma\|v_n\|_{\L^2}^2-\beta\gamma\|v_n\|_{\L^2}\|z_n\|_{\L^2}\nonumber\\&\quad-\beta\|v_n\|_{\L^{2(\delta+1)}}^{2(\delta+1)}+\beta(2\delta+1)2^{2\delta-1}\|z_n\|_{\L^{2(\delta+1)}}\|v_n\|_{\L^{2(\delta+1)}}^{2\delta}\nonumber\\&\quad+\beta(2\delta+1)2^{2\delta-1}\|z_n\|_{\L^{2(\delta+1)}}^{2\delta+1}\|v_n\|_{\L^{2(\delta+1)}}\nonumber\\&\leq 2\beta(1+\gamma)^2\|v_n\|_{\L^2}^2+\frac{\beta}{\delta+1}\left(\frac{8\delta}{\delta+1}\right)^{\delta}[(1+\gamma)^2(\delta+1)^22^{2\delta-3}]^{\delta+1}\|z_n\|_{\L^{2(\delta+1)}}^{2(\delta+1)}\nonumber\\&\quad+\beta\|v_n\|_{\L^2}^2+\beta(1+\gamma)^2(\delta+1)^22^{2\delta-3}\|z_n\|_{\L^{2(\delta+1)}}^{2(\delta+1)}+\frac{\beta\gamma^2}{4}\|z_n\|_{\L^2}^2-\frac{\beta}{2}\|v_{n}\|_{\L^{2(\delta+1)}}^{2(\delta+1)}\nonumber\\&\quad+\frac{\beta}{\delta+1}\left(\frac{8\delta}{\delta+1}\right)^{\delta}[(2\delta+1)2^{2\delta-1}]^{\delta+1}\|z_n\|_{\L^{2(\delta+1)}}^{2(\delta+1)}\nonumber\\&\quad+\beta\left(\frac{2\delta+1}{2(\delta+1)}\right)\left(\frac{4}{\delta+1}\right)^{\frac{1}{2\delta+1}}[(2\delta+1)2^{2\delta-1}]^{\frac{2(\delta+1)}{2\delta+1}}\|z_n\|_{\L^{2(\delta+1)}}^{2(\delta+1)}.
		\end{align}
		Using the estimates \eqref{2.9}-\eqref{2.10} in \eqref{2.8}, we obtain
		\begin{align}\label{213}\nonumber
			&\frac{\d }{\d t}\|v_n\|_{\L^2}^2+\nu\|\partial_{\xi} v_n \|_{\L^2}^2 +\frac{\beta}{2}\|v_n\|_{\L^{2(\delta+1)}}^{2(\delta+1)} \\&\leq 
			C(\beta,\gamma)\|v_n\|_{\L^2}^2+C(\alpha,\beta,\gamma,\delta,\nu)\|z_n\|_{\L^{2(\delta+1)}}^{2(\delta+1)}\nonumber\\&\leq C(\beta,\gamma)+\frac{\beta}{4}\|v_n\|_{\L^{2(\delta+1)}}^{2(\delta+1)}+C(\alpha,\beta,\gamma,\delta,\nu)\|z_n\|_{\L^{2(\delta+1)}}^{2(\delta+1)},
		\end{align}
		for a.e. $t\in[0,T]$. Integrating the above inequality from $0$ to $t$, and using the fact that $\|x_n\|_{\L^2}\leq\|x\|_{\L^2}$, we get 
		\begin{align}\label{214}
			&	\|v_n(t)\|_{\L^2}^2+\nu\int_0^t\|\partial_{\xi}v_n(s)\|_{\L^2}^2\d s+\frac{\beta}{2}\int_0^t\|v_n(s)\|_{\L^{2(\delta+1)}}^{2(\delta+1)}\d s\nonumber\\&\leq C(\alpha,\beta,\gamma,\delta,\nu)\left(\|x\|_{\L^2}^2+\int_0^t\|z(s)\|_{\L^{2(\delta+1)}}^{2(\delta+1)}\d s\right),
		\end{align}
		for all $t\in[0,T]$. 
		\vskip 0.2 cm
		\noindent 
		\textbf{Step (3):} \emph{Weak convergence along a subsequence}. For the initial data $x \in \L^2(\1)$, using the estimate \eqref{214} and an application of Banach-Alaoglu theorem  yields the existence of a subsequence $\{v_{n_k}\}$ of $\{v_n\}$ such that (for convenience, we still denote the index $n_k$ by $n$): 
		%\begin{equation}\label{2.16}
		%	\left\{
		%	\begin{aligned}
		%		v_n&\xrightarrow{w^*}v\ \text{ in }\L^{\infty}(0,T;\L^p(\1)),\\
		%		v_n&\xrightarrow{w}v\ \text{ in }\L^{2}(0,T;\L^p(\1)),\\
		%		v_n&\xrightarrow{w}v\ \text{ in }\L^{p+2\delta}(0,T;\L^{p+2\delta}(\1)),
		%	\end{aligned}\right.
		%\end{equation} 
		\begin{equation}\label{LP1}
			\left\{
			\begin{aligned}
				v_n&\xrightarrow{w^*}v\ \text{ in }\ \L^{\infty}(0,T;\L^2(\1)),\\
				v_n&\xrightarrow{w}v\ \text{ in }\ \L^{2}(0,T;\H_0^1(\1)),\\
				v_n&\xrightarrow{w}v\ \text{ in }\ \L^{2(\delta+1)}(0,T;\L^{2(\delta+1)}(\1)),
			\end{aligned}\right.
		\end{equation}
		as $n\to\infty$. 
		\if
		Taking inner product of \eqref{2.7} with $\A v_n$ , we obtain
		\begin{align}\label{2.17} \nonumber
			\bigg\langle \frac{\d v_n}{\d t},\A v_n\bigg\rangle +\nu\langle\A v_n,\A v_n \rangle = -\alpha \langle \B(v_n+z_n),\A v_n \rangle+\beta \langle \c(v_n+z_n),\A v_n \rangle  \\
			\frac{1}{2}\frac{\d }{\d t}\|\partial_{\xi} v_n\|_{\L^2}^2+\nu \|\A v_n\|_{\L^2}^2 =-\alpha \langle \B(v_n+z_n),\A v_n \rangle+\beta \langle \c(v_n+z_n),\A v_n \rangle. 
		\end{align}Now we estimate the right hand side of \eqref{2.17}. 
		Using
		\begin{align*}
			|-\alpha &\langle \B(v_n+z_n),\A v_n\rangle|  \\&= |-\alpha \langle(v_n+z_n)^{\delta}\partial_{\xi}(v_n+z_n),\A v_n\rangle|\\&\leq \alpha 2^{\delta-1}\langle |v_n|^{\delta}+|z_n|^{\delta}|\partial_{\xi}(v_n+z_n)|,|\A v_n|\rangle \\& \leq \alpha 2^{\delta-1}\bigg\{\|v_n^{\delta}\partial_{\xi} v_n\|_{\L^2}+\|z_n^{\delta}\partial_{\xi} v_n\|_{\L^2}+\|v^{\delta}\partial_{\xi} z_n\|_{\L^2}+\|z_n^{\delta}\partial_{\xi} z_n\|_{\L^2}\bigg\}\|\A v_n\|_{\L^2}
			\\&\leq \frac{\nu}{4}\|\A v_n\|_{\L^2}^2+\alpha^2 2^{2(\delta-1)}\bigg\{\|v_n^{\delta}\partial_{\xi} v_n\|_{\L^2}^2+\|z_n^{\delta}\partial_{\xi} v_n\|_{\L^2}^2+\|v^{\delta}\partial_{\xi} z_n\|_{\L^2}^2+\|z_n^{\delta}\partial_{\xi} z_n\|_{\L^2}^2\bigg\}.
		\end{align*}We have the Sobolev embedding $\H^s (\Omega)\subset \C^k(\Omega)$, where $\Omega \subseteq \R^n$, whenever $s>\frac{n}{2}+k$, where $n$ is the dimension. For our case if we consider $n=1$ then we get $1>\frac{1}{2}+k$, and by the embedding we can say that $\H^1(\Omega) \subset \L^{\infty}(\Omega)$.
		Using the Sobolev embedding and H\"older's inequality in the above inequality, we obtain
		\begin{align*}
			|-\alpha \langle \B(v_n+z_n),\A v_n\rangle|  &\leq \frac{\nu}{4}\|\A v_n\|_{\L^2}^2+\alpha^2 2^{2(\delta-1)}\big\{ \|v_n\|_{\L^{\infty}}^{2\delta}\|\partial_{\xi} v_n\|_{\L^2}^2+\|z_n\|_{\L^{\infty}}^{2\delta}\|\partial_{\xi} v_n\|_{\L^2}^2\\&\quad+\|v_n\|_{\L^{\infty}}^{2\delta}\|\partial_{\xi} z_n\|_{\L^2}^2+\|z_n\|_{\L^{\infty}}^{2\delta}\|\partial_{\xi} z_n\|_{\L^2}^2\big\}.
		\end{align*}
		\noindent
		Using Cauchy-Schwarz inequality, Young's inequality (with $p=2,\; q=2$), and triangle inequality, to estimate the term $\beta(1+\gamma)( (v_n+z_n)^{\delta+1},\A v_n)$ as
		\begin{align*}
			\beta(1+\gamma)( (v_n+z_n)^{\delta+1},\A v_n) \leq \frac{\nu}{4}\|\A v_n\|_{\L^2}^2+\frac{\beta^2(1+\gamma)^2}{\nu}\|v_n\|_{\L^{2(\delta+1)}}^{2(\delta+1)}+\frac{\beta^2(1+\gamma)^2}{\nu}\|z_n\|_{\L^{2(\delta+1)}}^{2(\delta+1)}.
		\end{align*}
		Similarly we can estimate the term $\gamma\beta((v_n+z_n),\A v_n)$ as
		\begin{align*}
			|-\gamma\beta((v_n+z_n),\A v_n)| \leq \frac{\nu}{4}\|\A v_n\|_{\L^2}^2+\frac{\beta^2\gamma^2}{\nu}\|v_n\|_{\L^2}^2+\frac{\beta^2\gamma^2}{\nu}\|z_n\|_{\L^2}^2.
		\end{align*} Using Taylor's formula ($\theta\in[0,1]$), we obtain
		\begin{align*}
			-\beta((v_n+z_n)^{2\delta+1},\A v_n)&= -\beta(v_n^{2\delta+1}+(2\delta+1)z_n(\theta v_n+(1-\theta)z_n)^{2\delta},\A v_n) \\& \leq -\beta(2\delta+1)\|v_n^{\delta}\partial_{\xi}v_n\|_{\L^2}^2-\beta(2\delta+1)(z_n(\theta v_n+(1-\theta)\z_n)^{2\delta},\A v_n).
		\end{align*} We take the second term of above inequality and estimate as
		\begin{align*}
			|-\beta&(2\delta+1)(z_n(\theta v_n+(1-\theta)z_n)^{2\delta},\A v_n)| \\&\leq \beta(2\delta+1)2^{2\delta-1}(|z_n|(|v_n|^{2\delta}+|z_n|^{2\delta}),\A v_n) \\& \leq \frac{\nu}{4}\|\A v_n\|_{\L^2}^2+\frac{2^{2\delta-1}(2\delta+1)^2}{\nu}\|z_n\|_{\L^{2(2\delta+1)}}^2\|v_n\|_{\L^{2(2\delta+1)}}^{4\delta}+\frac{2^{2\delta-1}(2\delta+1)^2}{\nu}\|v_n\|_{\L^{2(2\delta+1)}}^{4\delta+2}
			\\& 
			\leq \frac{\nu}{4}\|\A v_n\|_{\L^2}^2+\frac{2^{2\delta-1}(2\delta+1)^2}{\nu}\bigg\{2\|z_n\|_{\L^{2(2\delta+1)}}^2+\|v_n\|_{\L^{2(2\delta+1)}}\bigg\}
		\end{align*} By combining above two estimates we obtain
		\begin{align*}
			-\beta((v_n+z_n)^{2\delta+1},\A v_n) &\leq -\beta(2\delta+1)\|v_n^{\delta}\partial_{\xi}v_n\|_{\L^2}^2+\frac{\nu}{4}\|\A v_n\|_{\L^2}^2+\frac{2^{2\delta-1}(2\delta+1)^2}{\nu}\\&\quad \times\bigg\{2\|z_n\|_{\L^{2(2\delta+1)}}^2+\|v_n\|_{\L^{2(2\delta+1)}}^{2(2\delta+1)}\bigg\}.
		\end{align*} Using the above estimates in \eqref{2.17}, we obtain
		\begin{align}\label{2.18} \nonumber
			\frac{1}{2}\frac{\d}{\d t}&\|\partial_{\xi} v_n\|_{\L^2}^2+\frac{\nu}{4} \|\A v_n\|_{\L^2}^2+\beta(2\delta+1)\|v_n^{\delta}\partial_{\xi}v_n\|_{\L^2}^2 \nonumber\\&\leq \alpha^22^{2(\delta-1)}\big\{ \|v_n\|_{\L^{\infty}}^{2\delta}\|\partial_{\xi} v_n\|_{\L^2}^2+\|z_n\|_{\L^{\infty}}^{2\delta}\|\partial_{\xi} v_n\|_{\L^2}^2+\|v_n\|_{\L^{\infty}}^{2\delta}\|\partial_{\xi} z_n\|_{\L^2}^2 \nonumber\\&\quad+\|z_n\|_{\L^{\infty}}^{2\delta}\|\partial_{\xi} z_n\|_{\L^2}^2\big\}+\frac{\beta^2(1+\gamma)^2}{\nu}\|v_n\|_{\L^{2(\delta+1)}}^{2(\delta+1)}+\frac{\beta^2(1+\gamma)^2}{\nu}\|z_n\|_{\L^{2(\delta+1)}}^{2(\delta+1)}\nonumber\\& \quad +\frac{\beta^2\gamma^2}{\nu}\|v_n\|_{\L^2}^2+\frac{\beta^2\gamma^2}{\nu}\|z_n\|_{\L^2}^2+\frac{2^{2\delta-1}(2\delta+1)^2}{\nu} \big\{2\|z_n\|_{\L^{2(2\delta+1)}}^2+\|v_n\|_{\L^{2(2\delta+1)}}^{2(2\delta+1)}\big\}.
		\end{align}
		Using Gronwall's inequality and $\L^2$-estimate, we obtain 
		\begin{align*}
			\|\partial_{\xi} v_n\|_{\L^2}^2 & \leq \exp\bigg(\alpha^22^{2(\delta-1)}\int_{0}^{t}(\|v_n\|_{\L^{\infty}}^{2\delta}+\|z_n\|_{\L^{\infty}}^{2\delta})\d s\bigg)\bigg[\|\partial_{\xi} v_n(0)\|_{\L^2}^2\\&\quad+\int_{0}^{t}\big(\alpha^22^{2(\delta-1)}\big\{\|v_n\|_{\L^{\infty}}^{2\delta}\|\partial_{\xi} z_n\|_{\L^2}^2+\|z_n\|_{\L^{\infty}}^{2\delta}\|\partial_{\xi} z_n\|_{\L^2}^2\big\}+\frac{\beta^2(1+\gamma)^2}{\nu}\|v_n\|_{\L^{2(\delta+1)}}^{2(\delta+1)}\\& \quad+\frac{\beta^2(1+\gamma)^2}{\nu}\|z_n\|_{\L^{2(\delta+1)}}^{2(\delta+1)}  +\frac{\beta^2\gamma^2}{\nu}\|v_n\|_{\L^2}^2+\frac{\beta^2\gamma^2}{\nu}\|z_n\|_{\L^2}^2+\frac{2^{2\delta-1}(2\delta+1)^2}{\nu} \\&\quad\times\big\{2\|z_n\|_{\L^{2(2\delta+1)}}^2+\|v_n\|_{\L^{2(2\delta+1)}}^{2(2\delta+1)}\big\}\big)\bigg]\\& <\infty,
		\end{align*}provided $\delta=1$ and we are restricted our self in one dimensional so that the Sobolev embedding be applicable. Using the above inequality in \eqref{2.18} and integrating the resultant from $0$ to $t$, we find
		\begin{align*}
			\|\partial_{\xi} v_n(t)\|_{\L^2}^2+\frac{\nu}{2}\int_{0}^{t}\|\A v_n(s)\|_{\L^2}^2\d s +2\beta(2\delta+1)\int_{0}^{t}\|v_n^{\delta}\partial_{\xi}v_n(s)\|_{\L^2}^2 \d s <\infty,
		\end{align*}for any $t\in[0,T]$.\\ \fi
		\vskip 0.2 cm
		\noindent
		\textbf{Step (4):} \emph{Estimate for the derivative}. For any $\phi \in \L^{2(\delta+1)}(0,T;\H_0^1(\1))$, taking  the inner with $\phi$ to the first equation of system \eqref{2.7}, and using integration by parts, Cauchy-Schwarz inequality, Taylor's formula and H\"older's inequality, we obtain 
		\begin{align*}
			\int_{0}^{T}&\bigg|\bigg\langle \frac{\partial v_n}{\partial t}, \phi\bigg\rangle \bigg|\d t  \\& \leq \nu \int_{0}^{T}|\langle \A v_n, \phi\rangle |\d t+\alpha\int_{0}^{T}|\langle  \B_n(v_n+z_n), \phi\rangle |\d t+\beta \int_{0}^{T}|\langle \c(v_n+z_n), \phi\rangle |\d t \\& \leq  \nu\int_{0}^{T}\|v_n\|_{\H_0^1}\|\phi\|_{\H_0^1}\d t +\frac{\alpha}{\delta+1}\int_{0}^{T}\|v_n+z_n\|_{\L^{2(\delta+1)}}^{\delta+1}\|\phi\|_{\H_0^1}\d t \\& \quad +\beta(1+\gamma) \int_{0}^{T}\|v_n+z_n\|_{\L^{2(\delta+1)}}^{\delta+1}\|\phi\|_{\L^2}\d t+ \beta\gamma \int_{0}^{T}\|v_n+z_n\|_{\L^2}\|\phi\|_{\L^2}\d t \\& \quad +\beta\int_{0}^{T}\|v_n+z_n\|_{\L^{2(\delta+1)}}^{2\delta+1}\|\phi\|_{\L^{2(\delta+1)}}\d t
			\\& \leq \left[\nu\bigg(\int_{0}^{T}\|v_n\|_{\H_0^1}^2\d t\bigg)^{\frac{1}{2}}+\frac{\alpha}{\delta+1}\bigg(\int_{0}^{T}\|v_n+z_n\|_{\L^{2(\delta+1)}}^{2(\delta+1)}\d t\bigg)^{\frac{1}{2}}\right]\bigg(\int_{0}^{T}\|\phi\|_{\H_0^1}^2\d t \bigg)^{\frac{1}{2}} \\& \quad +\beta\left[(1+\gamma)\bigg(\int_{0}^{T}\|v_n+z_n\|_{\L^{2(\delta+1)}}^{2(\delta+1)}\d t\bigg)^{\frac{1}{2}}+\gamma \bigg(\int_{0}^{T}\|v_n+z_n\|_{\L^2}^2\d t\bigg)^{\frac{1}{2}}\right]\bigg(\int_{0}^{T}\|\phi\|_{\L^2}^2\d t \bigg)^{\frac{1}{2}} \\& \quad +\beta\bigg(\int_{0}^{T}\|v_n+z_n\|_{\L^{2(\delta+1)}}^{2(\delta+1)}\d t \bigg)^{\frac{2\delta+1}{2(\delta+1)}}\bigg(\int_{0}^{T}\|\phi\|_{\L^{2(\delta+1)}}^{2(\delta+1)}\d t \bigg)^{\frac{1}{2(\delta+1)}},
		\end{align*}
		which is finite.	Since it is true for any $\phi \in \L^{2(\delta+1)}(0,T;\H_0^1(\1))$, we get  \begin{align*}\frac{\d v_n}{\d t } \xrightarrow{w} \frac{\d v}{\d t} \ \text{ in }\  \ \L^{\frac{2(\delta+1)}{2\delta+1}}(0,T;\H^{-1}(\1)).
		\end{align*} 
		Since the embedding $\H_0^1(\1)\subset \L^2(\1)$ is compact, using Aubin-Lions compactness lemma (Theorem 1, \cite{JM}), we deduce  the following strong convergence (along a subsequence):
		\begin{align}\label{SC}
			v_{n} \to v \ \text{ in }\ \L^2(0,T;\L^2(\1)), \ \text{ as } \ n\to \infty. 
		\end{align}
		\vskip 0.2 cm
		\noindent 
		\textbf{Step (5):} \emph{Passing to limit  in $\B_n(\cdot)$ and $\c_n(\cdot)$}.
		Now our aim is to pass the limit in the terms $\B_n(\cdot)$ and $\c_n(\cdot)$ by using $\L^2$-estimate.
		Let us choose a smooth function $\phi \in \C([0,T];\C^1([0,1]))$, and using Taylor's formula and H\"older's inequality to obtain 
		\begin{align}\label{2.19} \nonumber
			\int_{0}^{T}\big|&\langle \B_n(v_n+z_n)-\B(v+z),\phi \rangle\big|\d t \nonumber\\&= \int_{0}^{T}\big| \langle (v_n+z_n)^{\delta}\partial_{\xi}(v_n+z_n)-(v+z)^{\delta}\partial_{\xi} (v+z), \phi \rangle\big|\d t \nonumber\\& \leq\frac{1}{\delta+1} \int_{0}^{T}\big|\langle (v_n+z_n)^{\delta+1}-(v+z)^{\delta+1},\partial_{\xi} \phi \rangle\big|\d t 
			\nonumber \\& \leq  \int_{0}^{T}\|((v_n+z_n)-(v+z))(\theta(v_n+z_n)+(1-\theta)(v+z))^{\delta}\|_{\L^1}\|\partial_{\xi}\phi\|_{\L^{\infty}} \d t
			\nonumber \\& \leq \sup_{t\in[0,T]}\|\partial_{\xi}\phi\|_{\L^{\infty}} \int_{0}^{T}\big(\|v_n\|_{\L^{2\delta}}+\|z_n\|_{\L^{2\delta}}+\|v\|_{\L^{2\delta}}+\|z\|_{\L^{2\delta}}\big)^{\delta}\big(\|v_n-v\|_{\L^2}+\|z_n-z\|_{\L^2}\big)\d t \nonumber \\& \leq \sup_{t\in[0,T]}\|\partial_{\xi}\phi\|_{\L^{\infty}} \bigg(\int_{0}^{T}\big(\|v_n\|_{\L^{2\delta}}+\|z_n\|_{\L^{2\delta}}+\|v\|_{\L^{2\delta}}+\|z\|_{\L^{2\delta}}\big)^{2\delta}\d t \bigg)^\frac{1}{2} \nonumber\\& \quad\times \bigg(\int_{0}^{T}\big(\|v_n-v\|_{\L^2}+\|z_n-z\|_{\L^2}\big)^2\d t\bigg)^{\frac{1}{2}}\nonumber \\& \to 0, \ \text{ as }\  n\to \infty,
		\end{align} since we have the convergence $z_n \to z \in \C([0,T]\times [0,1])$. By using the density of $\C([0,T];\C^1([0,1]))$ in $\mathrm{L}^2(0,T;\L^{2}(\mathcal{O}))$, we obtain \begin{align}\label{2p18}\B(v_n+z_n)\xrightarrow{w}\B(v+z)\ \text{ in }\ \mathrm{L}^2(0,T;\L^2(\mathcal{O}))\ \text{ as }\ n\to\infty.\end{align} 
		Again choosing a function $\phi \in \C([0,T];\C([0,1]))$, using Taylor's formula, H\"older's inequality and \eqref{SC}, we obtain
		\begin{align}\label{CLP}\nonumber
			&\int_{0}^{T}\big|(\c_n(v_n+z_n)-\c(v+z),\phi)\big|\d t \nonumber\\& \leq (1+\gamma)\int_{0}^{T}\big|((v_n+z_n)^{\delta+1}-(v+z)^{\delta+1},\phi)\big|\d t+\gamma\int_{0}^{T}\big| ((v_n+z_n)-(v+z),\phi)\big|\d t 
			\nonumber\\& \quad + \int_{0}^{T}\big|((v_n+z_n)^{2\delta+1}-(v+z)^{2\delta+1},\phi)\big|\d t 
			\nonumber\\& \leq (1+\gamma)(\delta+1)\sup_{t\in[0,T]}\|\phi\|_{\L^{\infty}}\nonumber\\&\qquad\times\int_{0}^{T}(\|v_n\|_{\L^{2\delta}}+\|z_n\|_{\L^{2\delta}}+\|v\|_{\L^{2\delta}}+\|z\|_{\L^{2\delta}})^{\delta}
			(\|v_n-v\|_{\L^2}+\|z_n-z\|_{\L^2})\d t \nonumber\\&\quad+\gamma \bigg[\bigg(\int_{0}^{T}\|v_n-v\|_{\L^2}^2\d t\bigg)^{\frac{1}{2}}+\bigg(\int_{0}^{T}\|z_n-z\|_{\L^2}^2\d t\bigg)^{\frac{1}{2}}\bigg]
			\bigg(\int_{0}^{T}\|\phi\|_{\L^2}^2\d t \bigg)^{\frac{1}{2}}\nonumber\\&\quad+(2\delta+1)\sup_{t\in[0,T]}\|\phi\|_{\L^{\infty}}\bigg[ \int_{0}^{T}\|v_n-v\|_{\L^2}^{\frac{1}{\delta}}\|v_n-v\|_{\L^{2(\delta+1)}}^{\frac{\delta-1}{\delta}}\big(\|v_n+z_n\|_{\L^{2(\delta+1)}}^{2\delta}+\|v+z\|_{\L^{2(\delta+1)}}^{2\delta}\big)\d t
			\nonumber\\&\quad+\bigg(\int_{0}^{T}\|z_n-z\|_{\L^{\delta+1}}^{\delta+1}\d t\bigg)^{\frac{1}{\delta+1}}\bigg(\int_{0}^{T}\big(\|v_n+z_n\|_{\L^{2(\delta+1)}}^{2\delta}+\|v+z\|_{\L^{2(\delta+1)}}^{2\delta}\big)\d t\bigg)^{\frac{\delta}{\delta+1}}\bigg]\nonumber\\&\leq (1+\gamma)(\delta+1)\sup_{t\in[0,T]}\|\phi\|_{\L^{\infty}}\nonumber\\&\qquad\times\left(\int_{0}^{T}(\|v_n\|_{\L^{2\delta}}+\|z_n\|_{\L^{2\delta}}+\|v\|_{\L^{2\delta}}+\|z\|_{\L^{2\delta}})^{2\delta}\d t\right)^{1/2}
			\nonumber\\&\qquad\times\left(\int_0^T(\|v_n-v\|_{\L^2}+\|z_n-z\|_{\L^2})^2\d t\right)^{1/2} \nonumber\\&\quad+\gamma \bigg[\bigg(\int_{0}^{T}\|v_n-v\|_{\L^2}^2\d t\bigg)^{\frac{1}{2}}+\bigg(\int_{0}^{T}\|z_n-z\|_{\L^2}^2\d t\bigg)^{\frac{1}{2}}\bigg]
			\bigg(\int_{0}^{T}\|\phi\|_{\L^2}^2\d t \bigg)^{\frac{1}{2}}\nonumber\\&\quad+(2\delta+1)\sup_{t\in[0,T]}\|\phi\|_{\L^{\infty}}\bigg[\bigg(\int_{0}^{T}\|v_n-v\|_{\L^2}^2\d t\bigg)^{\frac{1}{2\delta}}\bigg(\int_{0}^{T}\|v_n-v\|_{\L^{2(\delta+1)}}^{2(\delta+1)}\bigg)^{\frac{\delta-1}{2\delta(\delta-1)}}
			\nonumber\\&\qquad\times\bigg(\int_{0}^{T}\big(\|v_n\|_{\L^{2(\delta+1)}}^{2(\delta+1)}+\|v\|_{\L^{2(\delta+1)}}^{2(\delta+1)}\big)\d t \bigg)^{\frac{\delta}{\delta+1}}\nonumber\\&\quad +T^{\frac{2\delta^2}{(2\delta-1)(\delta+1)}}\sup_{t\in[0,T]}\big(\|z_n\|_{\L^{2(\delta+1)}}^{2\delta}+\|z\|_{\L^{2(\delta+1)}}^{2\delta}\big)\bigg(\int_{0}^{T}\|v_n-v\|_{\L^2}^2\d t\bigg)^{\frac{1}{2\delta}}\nonumber\\&\qquad\times\bigg(\int_{0}^{T}\|v_n-v\|_{\L^{2(\delta+1)}}^{2(\delta+1)}\d t\bigg)^{\frac{\delta-1}{2\delta(\delta+1)}}
			\nonumber\\&\quad+\bigg(\int_{0}^{T}\|z_n-z\|_{\L^{\delta+1}}^{\delta+1}\d t\bigg)^{\frac{1}{\delta+1}}\bigg(\int_{0}^{T}\big(\|v_n+z_n\|_{\L^{2(\delta+1)}}^{2\delta}+\|v+z\|_{\L^{2(\delta+1)}}^{2\delta}\big)\d t\bigg)^{\frac{\delta}{\delta+1}}\bigg]\nonumber\\&\to 0\ \text{ as } n\to\infty.
		\end{align}
		By using the density of $\C([0,T];\C([0,1]))$ in $\mathrm{L}^2(0,T;\L^{2}(\mathcal{O}))$, we get  \begin{align}\label{2p20}
			\c_n(v_n+z_n)\xrightarrow{w}\c_n(v+z)\ \text{ in }\ \mathrm{L}^2(0,T;\L^2(\mathcal{O}))\ \text{ as }\ n\to\infty.
		\end{align}
		Thus one can pass to limit in the equation \eqref{2.7}, and use the convergences \eqref{LP1}, \eqref{2p18} and \eqref{2p20} to deduce that $v(\cdot)$ is a weak solution. 
		
		The steps (2), (3)  and (4) yield $v \in W^{1,\frac{2(\delta+1)}{2\delta+1}}(0,T;\H^{-1}(\1))$ and an application of Theorem 2, page 302, \cite{LCE} implies  $v \in \C([0,T];\H^{-1}(\1))$. Moreover, from Proposition 1.7.1, \cite{PCAM}, we have $v \in \C_w([0,T];\L^{2}(\1))$, where $\C_w([0,T];\L^{2}(\1))$ is the space of functions $u:[0,T]\to\L^{2}(\1)$ which are weakly continuous.
		\vskip 0.2 cm
	\noindent	\textbf{Step (6):} \emph{Energy equality}. 
		We need to show that $v \in \C([0,T];\L^{2}(\1))$ and it satisfies energy equality. Let us define $\mathcal{D}_T= \{\phi:\phi\in\C_0^{\infty}(\1_T)\}$, where $\1_T = \1\times[0,T)$. Note that $u(\cdot,T)=0$ for all $u\in\mathcal{D}_T$ and $\mathcal{D}_T$ is dense in $\L^q(0,T;\H^1(\1))$ (Lemma 2.6, \cite{GGP} for $q=2$). For $u \in\L^q(0,T;\X)$, $1\leq q<\infty$ and $T>h>0$, the mollifier $u_h$ (in the sense of Friederichs) of $u$ is defined by \begin{align*}
			u_h(t)=\int_{0}^{T}j_h(t-s)u(s)\d s,
		\end{align*}	where $j_h(\cdot)$ is an infinitely differentiable function having support in $(-h,h),$ which is even and positive, such that $\int_{-\infty}^{+\infty}j_h(s)\d s=1$. From Lemma 2.5, \cite{GGP}, we infer that for $u\in \L^q(0,T;\X),\; 1\leq q <\infty$, we have $u_h \in \C^k([0,T];\X)$ for all $k\geq 0$. Moreover 
		\begin{align}\label{2.21}
			\lim_{h\to0}\|u_h-u\|_{\L^q(0,T;\X)}=0.
		\end{align}Finally, if ${u_k}\in\L^q(0,T;\X)$ converges to $u$ in the norm of $\L^q(0,T;\X),$ then 
		\begin{align}\label{2.22}
			\lim_{k\to \infty}\|(u_k)_h-u_h\|_{\L^q(0,T;\X)}=0.
		\end{align}
		The weak solution of \eqref{2.4} can be written as 
		\begin{align}\label{2p21}
			\int_s^t\left\{\left(v,\frac{\partial\varphi}{\partial t}\right)+\langle\nu\A v+\alpha\B(v+z)-\beta c(v+z),\varphi\rangle\right\}\d\tau =(v(t),\varphi(t))-(v(s),\varphi(s)),
		\end{align}
		for all $s\in[0,t]$, $t<T$ and all $\varphi\in\mathcal{D}_T$.
		Let $\{v_k\}\in\mathcal{D}_T$ be a sequence converging to $v \in \L^{2\delta}(0,T;\H^1(\1))$. Let us choose $\varphi=(v_k)_h=:v_{k,h}$  in \eqref{2p21} ($s=0$), where $(\cdot)_{h}$ is mollification operator defined above,  for $0\leq t<T$, we obtain
		\begin{align}\label{1a}\nonumber
			&	\int_{0}^{t}\bigg(v,\frac{\partial v_{k,h}}{\partial t}\bigg)+\nu (\partial_{\xi} v,\partial_{\xi} v_{k,h})\bigg)\d s\nonumber\\&=(v(t),v_{k,h}(t))-(v(0),v_{k,h}(0))+\int_{0}^{t}\bigg(-\alpha \langle (v+z)^{\delta}\partial_{\xi} (v+z),v_{k,h} \rangle \nonumber\\& \quad+\beta(1+\gamma)\langle (v+z)^{\delta+1},v_{k,h}\rangle-\beta\gamma\langle (v+z),v_{k,h}\rangle-\beta\langle(v+z)^{2\delta+1},v_{k,h}\rangle\bigg)\d s. 	
		\end{align}
		Now with the help of \eqref{2.22}, one can take the limit as $k\to \infty$. To pass the limit in the first term of the right hand side of \eqref{1a} we use integration by parts and H\"older's inequality to  find % H\"older's inequality (with $\frac{2(\delta+1)}{\delta},\; 2,\; 2(\delta+1)$), and again using H\"older's inequality (with $2,\;2$)
		\begin{align*}
			\bigg|\int_{0}^{t}(( (v+z)^{\delta}&\partial_{\xi}(v+z),v_{k,h})-( (v+z)^{\delta}\partial_{\xi}(v+z),v_h)) \d s \bigg|  \\& \leq \frac{1}{\delta+1} \int_{0}^{t}\|(v+z)\|_{\L^{2(\delta+1)}}^{\delta+1}\|\partial_{\xi}(v_{k,h}-v_h)\|_{\L^2}\d s \\&\leq \frac{1}{\delta+1}\|v_{k,h}-v_h\|_{\L^2(0,T;\H^1)}\bigg(\int_{0}^{t}\|(v+z)\|_{\L^{2(\delta+1)}}^{2(\delta+1)}\d s\bigg)^{\frac{1}{2}} 
			\\& \to 0 \text{ as } k \to\infty.
		\end{align*} Using H\"older's inequality, %(with $2,\;2$)
		the second term in the right hand side of \eqref{1a} can be estimated as 
		\begin{align*}
			\bigg|\int_{0}^{t}(( (v+z)^{\delta+1},v_{k,h}&) -( (v+z)^{\delta+1},v_h))\d s \bigg|\\&\leq \int_{0}^{t} \|v+z\|^{\delta+1}_{\L^{2(\delta+1)}}\|v_{k,h}-v_h\|_{\L^2}\d s \\&\leq \|v_{k,h}-v_h\|_{\L^2(0,T;\L^2)}\bigg(\int_{0}^{t}\|(v+z)\|_{\L^{2(\delta+1)}}^{2(\delta+1)}\d s\bigg)^{\frac{1}{2}} \\& \to 0 \text{ as } k \to\infty.
		\end{align*}For the third term in the right hand side of \eqref{1a}, using H\"older's inequality, we have  %(with $2,\;2$)
		\begin{align*}
			\bigg|\int_{0}^{t}(( v+z,v_{k,h}) -( v+z,v_h))\d s\bigg|  &\leq C\|v_{k,h}-v_h\|_{\L^2(0,T;\L^2)}\bigg(\int_{0}^{t}\|v+z\|_{\L^2}^2\d s\bigg)^{\frac{1}{2}}
			\\& \to 0 \text{ as } k \to\infty.
		\end{align*}The final term in the right hand side of \eqref{1a} can be estimated using the Cauchy-Schwarz inequality, the embedding of $\H^1(\1)\subset \L^{\infty}(\1)$, interpolation inequality, H\"older's inequality %\big(with $\frac{2(\delta+1)}{2\delta+1},\; 2(\delta+1)$\big)
		and \eqref{2.22} as
		\begin{align*}
			\bigg|\int_{0}^{t}&( ((v+z)^{2\delta+1},v_{k,h})- ((v+z)^{2\delta+1},v_h))\d s \bigg| \\& \leq  \int_{0}^{t}\|v+z\|_{\L^{2\delta+1}}^{2\delta+1}\|v_{k,h}-v_h\|_{\L^{\infty}}\d s \\&\leq\int_0^t\|v+z\|_{\L^2}^{\frac{1}{\delta}}\|v+z\|_{\L^{2(\delta+1)}}^{\frac{(\delta+1)(2\delta-1)}{\delta}}\|v_{k,h}-v_h\|_{\H^1}\d s\\& \leq\sup_{s\in[0,t]} \|v+z\|_{\L^2}^{\frac{1}{\delta}}\left(\int_0^t\|v+z\|_{\L^{2(\delta+1)}}^{2(\delta+1)}\d s\right)^{\frac{2\delta-1}{2\delta}}\left(\int_0^t\|v_{k,h}-v_h\|_{\H^1}^{2\delta}\d s\right)^{\frac{1}{2\delta}}\\& \to 0,\; \text{ as } k\to \infty.
		\end{align*}	Passing limit $k\to\infty$ in \eqref{1a}, we obtain
		\begin{align}\label{1b}\nonumber
			-\int_{0}^{t}&\bigg(v,\frac{\partial v_h}{\partial t}\bigg)\d s+\nu\int_{0}^{t} (\partial_{\xi} v,\partial_{\xi}v_h)\d s+(v(t),v_h(t))\nonumber\\&=(v(0),v_h(0))+\int_{0}^{t}\bigg(-\alpha \langle (v+z)^{\delta}\partial_{\xi} (v+z),v_h \rangle +\beta(1+\gamma)\langle (v+z)^{\delta+1},v_h\rangle\nonumber\\& \quad-\beta\gamma\langle (v+z),v_h\rangle-\beta\langle(v+z)^{2\delta+1},v_h\rangle\bigg)\d s. 	
		\end{align}Since the kernel $j_h(s)$ in the definition of mollifier even in $(-h,h)$, we get 
		\begin{align*}
			\int_{0}^{t}\bigg(v,\frac{\partial v_{k,h}}{\partial t}\bigg)\d t =\int_{0}^{t}\int_{0}^{t}\frac{\d j_h(t-t')}{\d t}(v(t),v(t'))\d t\d t'=0.
		\end{align*}By using  \eqref{2.21} and similar arguments as above yield 
		\begin{align*}
			\lim_{h\to 0}\int_{0}^{t}(\partial_{\xi} v,\partial_{\xi} v_h)\d s&= \int_{0}^{t}(\partial_{\xi} v,\partial_{\xi} v)\d s,
			\\ \lim_{h\to 0}\int_{0}^{t}\langle\B(v+z),v_h\rangle\d s&=\int_{0}^{t}\langle\B(v+z),v\rangle\d s,
			\\ \lim_{h\to 0}\int_{0}^{t}\langle\c(v+z),v_h\rangle\d s &=\int_{0}^{t}\langle\c(v+z),v\rangle\d s.
		\end{align*} As $h\to 0$ in \eqref{1b}, we obtain 
		\iffalse
		\begin{align*}
			\|&v(t)\|_{\L^2}^2+\nu\int_{0}^{t}\|\partial_{\xi}v(s)\|_{\L^2}^2\d s+\beta\gamma\int_{0}^{t}(v(s)+z(s),v(s))\d s\\&+\beta\int_{0}^{t}((v(s)+z(s))^{2\delta+1},v(s))\d s+\alpha \int_{0}^{t}((v(s)+z(s))^{\delta}\partial_{\xi} (v(s)+z(s)),v(s))\d s \\&\quad \quad=\|v(0)\|_{\L^2}^2+\beta(1+\gamma)\int_{0}^{t}((v(s)+z(s))^{\delta+1},v(s))\d s.
		\end{align*}By rearranging the terms, we obtain
		\begin{align*}
			&\|v(t)\|_{\L^2}^2+\nu\int_{0}^{t}\|\partial_{\xi}v(s)\|_{\L^2}^2\d s+\beta\gamma\int_{0}^{t}(v(s)+z(s),v(s)-z(s)+z(s))\d s\\& \quad+\beta\int_{0}^{t}((v(s)+z(s))^{2\delta+1},v(s)-z(s)+z(s))\d s \\& =\|v(0)\|_{\L^2}^2+\beta(1+\gamma)\int_{0}^{t}((v(s)+z(s))^{\delta+1},v(s))\d s+\frac{\alpha}{1+\delta} \int_{0}^{t}((v(s)+z(s))^{\delta+1} ,\partial_{\xi}v(s))\d s,
		\end{align*}
		\fi
		\begin{align*}
			&\|v(t)\|_{\L^2}^2+\nu\int_{0}^{t}\|\partial_{\xi}v(s)\|_{\L^2}^2\d s+\beta\gamma\int_{0}^{t}\|v(s)+z(s)\|_{\L^2}^2\d s+\beta\int_{0}^{t}\|v(s)+z(s)\|_{\L^{2(\delta+1)}}^{2(\delta+1)}\d s  \\&
			=\|v(0)\|_{\L^2}^2+\beta(1+\gamma)\int_{0}^{t}((v(s)+z(s))^{\delta+1},v(s))\d s+\frac{\alpha }{1+\delta}\int_{0}^{t}((v(s)+z(s))^{\delta+1},\partial_{\xi} v(s))\d s\\&\quad+\beta\gamma \int_{0}^{t}(v(s)+z(s),z(s))\d s+\beta\int_{0}^{t}((v(s)+z(s))^{2\delta+1},z(s))\d s,
		\end{align*} and hence the energy equality is satisfied. 	We know that every weak solution is $\L^2$-weakly continuous in time, and all weak solutions satisfy the energy equality, and so all weak solutions belongs to $\C([0,T];\L^2(\mathcal{O}))$. 
		\vskip 0.2 cm
	\noindent	\textbf{Step (7):} \emph{Uniqueness}. Let $v_1$ and $v_2$ be the two weak solutions of the system \eqref{2.7} and the initial data $x$. Note that $w=v_1-v_2$ satisfies:
		\begin{equation}\label{U1}
			\left\{
			\begin{aligned}
				\frac{\partial w}{\partial t} - \nu \A w&= -\alpha\big[(v_1+z)^{\delta}\partial_{\xi}(v_1+z)^{\delta}-(v_2+z)^{\delta}\partial_{\xi}(v_2+z)^{\delta}\big]+\beta\big[(v_1+z)\\&\quad\times(1-(v_1+z)^{\delta})((v_1+z)^{\delta}-\gamma)-(v_2+z)(1-(v_2+z)^{\delta})((v_2+z)^{\delta}-\gamma)], \\
				w(0)&= 0,
			\end{aligned}
			\right.
		\end{equation}in $\H^{-1}(\1)$, for a.e. $t\in[0,T]$. Taking the inner product of first equation of above system \eqref{U1} with $w$, we obtain
		\begin{align}\label{U2}\nonumber
			\frac{1}{2}\frac{\d}{\d t} \|w\|_{\L^2}^2+\nu \|\partial_{\xi}w\|_{\L^2}^2&= \frac{\alpha}{\delta+1}((v_1+z)^{\delta+1}-(v_2+z)^{\delta+1},\partial_{\xi} w)+\beta ((v_1+z)(1-(v_1+z)^{\delta})\\&\quad\times((v_1+z)^{\delta}-\gamma)-(v_2+z)(1-(v_2+z)^{\delta})((v_2+z)^{\delta}-\gamma),w).
		\end{align}For the term $\frac{\alpha}{\delta+1}((v_1+z)^{\delta+1}-(v_2+z)^{\delta+1},\partial_{\xi} w)$, we use  Taylor's formula, H\"older's inequality, interpolation inequality, Gagliardo-Nirenberg inequality and Young's inequality to estimate it as 
		\begin{align}\label{U3}\nonumber
			&\frac{\alpha}{\delta+1}((v_1+z)^{\delta+1}-(v_2+z)^{\delta+1},\partial_{\xi} w) \nonumber\\&= \alpha (w(\theta(v_1+z)+(1-\theta)(v_2+z))^{\delta}, \partial_{\xi} w) \nonumber\\& \leq \alpha2^{\delta-1}(\|v_1+z\|_{\L^{2(\delta+1)}}^{\delta}+\|v_2+z\|_{\L^{2(\delta+1)}}^{\delta})\|w\|_{\L^{2(\delta+1)}}\|\partial_{\xi}w\|_{\L^2}\nonumber\\& \leq  \alpha2^{\delta-1}(\|v_1+z\|_{\L^{2(\delta+1)}}^{\delta}+\|v_2+z\|_{\L^{2(\delta+1)}}^{\delta})\|w\|_{\L^2}^{\frac{\delta+2}{2(\delta+1)}}\|\partial_{\xi}w\|_{\L^2}^{\frac{3\delta+2}{2(\delta+1)}}\nonumber\\& \leq 
			\frac{\nu}{2}\|\partial_{\xi}w\|_{\L^2}^2+c2^{\frac{4\delta^2+3\delta-2}{\delta+2}}\alpha^{\frac{4(\delta+1)}{\delta+2}}\bigg(\frac{\delta+2}{4(\delta+1)}\bigg)\bigg(\frac{3\delta+2}{2\nu(\delta+1)}\bigg)^{\frac{3\delta+2}{\delta+2}}\nonumber\\&\quad\times\big(\|v_1+z\|_{\L^{2(\delta+1)}}^{\frac{4\delta(\delta+1)}{\delta+2}}+\|v_2+z\|_{\L^{2(\delta+1)}}^{\frac{4\delta(\delta+1)}{\delta+2}}\big)\|w\|_{\L^2}^2. 
		\end{align}Let us take the term $\beta(1+\gamma)((v_1+z)^{\delta+1}-(v_2+z)^{\delta+1},w)$ and estimate it using  Taylor's formula and H\"older's inequality as
		\begin{align}\label{U4}\nonumber
			&\beta(1+\gamma)((v_1+z)^{\delta+1}-(v_2+z)^{\delta+1},w)\nonumber\\&= \beta(1+\gamma)(\delta+1)(w(\theta(v_1+z)+(1-\theta)(v_2+z))^{\delta},w) \nonumber\\& \leq \beta(1+\gamma)(\delta+1)2^{\delta-1}(\|(|v_1+z|^{\delta}+|v_2+z|^{\delta})w\|_{\L^2})\|w\|_{\L^2}\nonumber\\& \leq  \frac{\beta}{4} \||v_1+z|^{\delta}w\|_{\L^2}^2+\frac{\beta}{4} \||v_2+z|^{\delta}w\|_{\L^2}^2+\beta 2^{2\delta-1}(1+\delta)^2(1+\gamma)^2\|w\|_{\L^2}^2.
		\end{align}
		Also, we have
		\begin{align}\label{U5}
			-\beta\gamma((v_1+z)-(v_2+z),w) =-\beta\gamma(w,w)=-\beta\gamma\|w\|_{\L^2}^2.
		\end{align}In order to estimate the final term of right hand side of \eqref{U2}, we use the following formula (cf. \cite{MTM1})
		\begin{align}\label{U6}
			(x|x|^{2\delta}-y|y|^{2\delta},x-y) \geq \frac{1}{2}\||x|^{\delta}(x-y)\|_{\L^2}^2+\frac{1}{2}\||y|^{\delta}(x-y)\|_{\L^2}^2.
		\end{align} 
		Let us take the term $-\beta((v_1+z)^{2\delta+1}-(v_2+z)^{2\delta+1},w)$ and using the above formula \eqref{U6} to estimate it as
		\begin{align}\label{U7}
			-\beta((v_1+z)^{2\delta+1}-(v_2+z)^{2\delta+1},w) \leq -\frac{\beta}{2} \||v_1+z|^{\delta}w\|_{\L^2}^2-\frac{\beta}{2}\||v_2+z|^{\delta}w\|_{\L^2}^2.
		\end{align}Combining \eqref{U4}, \eqref{U5} and \eqref{U7}, we get 
		\begin{align}\label{U8}\nonumber
			&\beta(((1+\gamma)((v_1+z)^{\delta+1}-(v_2+z)^{\delta+1}))-\gamma((v_1+z)-(v_2+z))-((v_1+z)^{2\delta+1}\\&\quad-(v_2+z)^{2\delta+1}),w) \nonumber\\&\leq  -\frac{\beta}{4} \||v_1+z|^{\delta}w\|_{\L^2}^2-\frac{\beta}{4}\||v_2+z|^{\delta}w\|_{\L^2}^2-\beta\gamma\|w\|_{\L^2}^2+\beta 2^{2\delta-1}(1+\delta)^2(1+\gamma)^2\|w\|_{\L^2}^2.
		\end{align}
		Using \eqref{U3} and \eqref{U8} in \eqref{U2}, we obtain
		\begin{align}\label{U9}\nonumber
			&\frac{\d}{\d t} \|w\|_{\L^2}^2+\nu \|\partial_{\xi}w\|_{\L^2}^2+\frac{\beta}{2} \||v_1+z|^{\delta}w\|_{\L^2}^2+\frac{\beta}{2} \||v_2+z|^{\delta}w\|_{\L^2}^2+2\beta\gamma\|w\|_{\L^2}^2\nonumber\\& \leq c2^{\frac{2\delta(\delta+1)}{\delta+2}}\alpha^{\frac{4(\delta+1)}{\delta+2}}\bigg(\frac{\delta+2}{4(\delta+1)}\bigg)\bigg(\frac{3\delta+2}{2\nu(\delta+1)}\bigg)^{\frac{3\delta+2}{\delta+2}}\big(\|v_1+z\|_{\L^{2(\delta+1)}}^{\frac{4\delta(\delta+1)}{\delta+2}}\nonumber+\|v_2+z\|_{\L^{2(\delta+1)}}^{\frac{4\delta(\delta+1)}{\delta+2}}\big)\|w\|_{\L^2}^2\nonumber\\&\quad+\beta 2^{2\delta}(1+\delta)^2(1+\gamma)^2\|w\|_{\L^2}^2.
		\end{align}
		An application of Gronwall's inequality in \eqref{U9} yields 
		\begin{align}\label{U10}
			\|w(t)\|_{\L^2}^2 &\leq \|w(0)\|_{\L^2}^2\exp(2^{\delta}\beta(1+\delta)^2(1+\gamma)^2T)\nonumber\\&\quad \times \exp\bigg\{C(\alpha,\delta,\nu) \int_{0}^{T}\bigg(\|v_1(t)+z(t)\|_{\L^{2(\delta+1)}}^{\frac{4\delta(\delta+1)}{\delta+2}}+\|v_2(t)+z(t)\|_{\L^{2(\delta+1)}}^{\frac{4\delta(\delta+1)}{\delta+2}}\bigg)\d t\bigg\},
		\end{align} for all $t\in[0,T]$. For $1\leq \delta\leq 2$, the term appearing inside the exponential is finite and since $v_1(\cdot) \text{ and } v_2(\cdot)$ are weak solutions of the system.  Since $w(0)=0$, the uniqueness follows from \eqref{U10} for any $\nu,\delta,\alpha$ and $\delta\in[1,2]$. 
		
		From \eqref{U10}, we obtain the uniqueness of the weak solution with a restriction on $\delta\in[1,2]$. To remove this restriction, we estimate the  term $\frac{\alpha}{\delta+1}((v_1+z)^{\delta+1}-(v_2+z)^{\delta+1},\partial_{\xi} w)$, with the help of Taylor's formula, H\"older's and Young's inequalities as
		\begin{align}\label{U11}\nonumber
			&\frac{\alpha}{\delta+1}((v_1+z)^{\delta+1}-(v_2+z)^{\delta+1},\partial_{\xi} w) \\&= \alpha (w(\theta (v_1+z)+(1-\theta)(v_2+z))^{\delta},\partial_{\xi} w ) \nonumber\\& \leq 2^{\delta-1}\alpha(\||v_1+z|^{\delta}w\|_{\L^2}+\||v_2+z|^{\delta}w\|_{\L^2})\|\partial_{\xi}w\|_{\L^2}
			\nonumber\\& \leq \theta\nu\|\partial_{\xi}w\|_{\L^2}^2+\frac{2^{2\delta-2}\alpha^2}{2\theta\nu}\||v_1+z|^{\delta}w\|_{\L^2}^2+\frac{2^{2\delta-2}\alpha^2}{2\theta\nu}\||v_2+z|^{\delta}w\|_{\L^2}^2.	\end{align}
		Using \eqref{U4} and \eqref{U10} in \eqref{U2}, we obtain 
		\begin{align}\label{U12}\nonumber
			\frac{\d}{\d t} \|w\|_{\L^2}^2+\nu(1-\theta) \|\partial_{\xi}w\|_{\L^2}^2&+\bigg(\beta(1-\theta)-\frac{2^{2\delta-2}\alpha^2}{\theta\nu}\bigg)\bigg\{ \||v_1+z|^{\delta}w\|_{\L^2}^2+ \||v_2+z|^{\delta}w\|_{\L^2}^2\bigg\}\\+2\beta\gamma\|w\|_{\L^2}^2& \leq C(\beta,\delta,\gamma)\|w\|_{\L^2}^2.
		\end{align}For $\beta\nu>2^{2\delta-2}\alpha^2$, an application of Gronwall's inequality in \eqref{U12} gives
		\begin{align}\label{U13}
			\|w(t)\|_{\L^2}^2\leq \|w(0)\|_{\L^2}^2\exp\{C(\beta,\delta,\gamma)T\},
		\end{align}for all $t\in[0,T]$. Hence the uniqueness follows from \eqref{U13}, provided $\beta\nu>2^{2(\delta-1)}\alpha^2$.
	\end{proof}
	\begin{remark}
		Under additional regularity assumptions on $z$ and $x\in\H_0^1(\mathcal{O})$, one can prove the existence of a strong solution $v\in\mathrm{L}^{\infty}(0,T;\H_0^1(\mathcal{O}))\cap\mathrm{L}^2(0,T;\H^2(\mathcal{O}))$ to the problem \eqref{2.4} and hence from the estimate \eqref{U10}, it follows that the strong solution (in the deterministic sense) is unique for any $\nu,\alpha,\beta>0$, $\gamma\in(0,1)$ and $1\leq \delta<\infty$.
	\end{remark}
	Since $u=v+z$ and $z\in\C([0,T];\C([0,1]))$, $\mathbb{P}$-a.s.,  we have the following results:
	\begin{theorem}\label{thm2.6}
		For $x\in\L^2(\1)$,	under assumption \eqref{13}, there exists a strong solution $u$ to the equation \eqref{2.2} in the sense of Definition \ref{def2.3} such that 
		\begin{align}\label{2.42}
			u\in\C([0,T];\L^2(\mathcal{O}))\cap\mathrm{L}^2(0,T;\C([0,1]))\cap\L^{2(\delta+1)}(0,T;\L^{2(\delta+1)}(\mathcal{O})),\  \mathbb{P}\text{-a.s.}
		\end{align}
		For $\delta\in[1,2]$, the strong solution is unique for any $\nu,\alpha,\beta>0, \gamma\in(0,1)$ and for $2<\delta<\infty$, the strong solution is unique for $\beta\nu>2^{2(\delta-1)}\alpha^2$.
	\end{theorem}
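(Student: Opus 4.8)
The plan is to transfer the pathwise results of Theorem \ref{thm2.4} to the stochastic setting through the decomposition $u=v+z$, where $z$ is the stochastic convolution solving the linear problem \eqref{2p2} and $v$ is the weak solution of the random equation \eqref{2.4}. Under assumption \eqref{13} we have already recorded that $z\in\C([0,T];\C([0,1]))\cap\C(0,T;\L^2(\1))\cap\L^2(0,T;\H_0^1(\1))$, $\P$-a.s., and, $z$ being a Gaussian process, all of its moments are finite by Fernique's theorem. For each fixed $\omega$, Theorem \ref{thm2.4} supplies $v(\omega)\in\C([0,T];\L^2(\1))\cap\L^2(0,T;\H_0^1(\1))\cap\L^{2(\delta+1)}(0,T;\L^{2(\delta+1)}(\1))$, so that $u=v+z$ inherits the asserted regularity \eqref{2.42}: the $\C([0,T];\L^2(\1))$ and $\L^{2(\delta+1)}(0,T;\L^{2(\delta+1)}(\1))$ memberships are obtained by summing the corresponding properties of $v$ and $z$, while for the middle slot I would use the one-dimensional embedding $\H_0^1(\1)\hookrightarrow\C([0,1])$ to get $v\in\L^2(0,T;\C([0,1]))$ and note that $z\in\C([0,T];\C([0,1]))\subset\L^2(0,T;\C([0,1]))$.

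Regarding measurability and adaptedness: each Galerkin approximant $v_n$ constructed in Theorem \ref{thm2.4} depends measurably on the path of $z_n$ and is therefore $\mathscr{F}_t$-adapted, and the convergences \eqref{LP1} and \eqref{SC} preserve this property, so $v$ and hence $u$ are $\mathscr{F}_t$-adapted. In the unique cases ($\delta\in[1,2]$, or $\delta>2$ with $\beta\nu>2^{2(\delta-1)}\alpha^2$) this is immediate from the measurable dependence of $v$ on $z$. The $\Omega$-integrability demanded in Definition \ref{def2.3} then follows by taking expectations in the deterministic bound \eqref{214} and invoking $\E\sup_{t\in[0,T]}\|z(t)\|_{\L^{2(\delta+1)}}^{2(\delta+1)}<\infty$. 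Next I would verify the variational identity: the stochastic convolution satisfies, in the weak sense, $(z(t),\varphi)=-\nu\int_0^t\langle\A z(s),\varphi\rangle\d s+\int_0^t(\G\d\W(s),\varphi)$ for every $\varphi\in\H_0^1(\1)$, and adding this to the deterministic weak form of \eqref{2.4} for $v$ while using $-\nu\A v-\nu\A z=-\nu\A u$, $\B(v+z)=\B(u)$ and $\c(v+z)=\c(u)$ reproduces exactly the identity listed in Definition \ref{def2.3}.

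The main obstacle is the energy equality, for which I would apply the infinite-dimensional It\^o formula to $t\mapsto\|u(t)\|_{\L^2}^2$. Working in the Gelfand triple $\H_0^1(\1)\cap\L^{2(\delta+1)}(\1)\subset\L^2(\1)\subset\H^{-1}(\1)+\L^{\frac{2(\delta+1)}{2\delta+1}}(\1)$, so that both $\B(u)$ and the reaction term $\c(u)$ act in the dual, the It\^o correction produces precisely the term $\Tr(\G\G^*)t$ and the martingale $2\int_0^t(\G\d\W(s),u(s))$; using $(\B(u),u)=0$ from \eqref{2.1} and the computation of $(\c(u),u)$ already carried out in Subsection \ref{subsec2.3}, the remaining deterministic brackets assemble into the stated equality. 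The delicate point is checking the hypotheses of the It\^o formula — that $u$ lies in the intersection space with generalized derivative in the sum of the duals and that the nonlinearities have the required integrability — where the regularity from Theorem \ref{thm2.4} and the bound $\|\B(u)\|_{\L^2}\leq\|u\|_{\H_0^1}^{\delta+1}$ are essential; alternatively one may add the deterministic energy equality for $v$ from Step~6 of Theorem \ref{thm2.4} to It\^o's formula for $\|z(t)\|_{\L^2}^2$ and for the cross term $2(v(t),z(t))$, the latter being the most laborious computation.

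Finally, uniqueness is inherited directly: if $u_1,u_2$ are two strong solutions, then $v_i:=u_i-z$ are weak solutions of \eqref{2.4} with identical data, whence the pathwise uniqueness of Theorem \ref{thm2.4} — unconditional for $\delta\in[1,2]$ and under $\beta\nu>2^{2(\delta-1)}\alpha^2$ for $\delta>2$ — forces $v_1=v_2$ and thus $u_1=u_2$, $\P$-a.s. This yields the full statement, with the only genuinely technical content concentrated in the justification of the It\^o formula for the energy equality.
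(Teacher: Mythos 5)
Your proposal is correct and follows exactly the route the paper takes: the paper derives Theorem \ref{thm2.6} in one line from the decomposition $u=v+z$, invoking Theorem \ref{thm2.4} pathwise for $v$ and the regularity of the stochastic convolution $z$ under assumption \eqref{13}. You in fact supply considerably more detail than the paper does (adaptedness, the passage from the deterministic energy equality of Step~6 to the stochastic one via It\^o's formula for $\|z\|_{\L^2}^2$ and the cross term, and the inheritance of uniqueness), all of which is consistent with the paper's implicit argument.
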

	\begin{theorem}\label{thm2.7}
		For $x\in\L^2(\1)$,	under assumption \eqref{1.3}, there exists a mild solution $u$ to the equation \eqref{2.2} in the sense of Definition \ref{def2.1} such that \eqref{2.42} is satisfied. 
	\end{theorem}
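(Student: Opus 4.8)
The plan is to build the mild solution from the pathwise decomposition $u = v + z$ exactly as in Theorem \ref{thm2.6}, but to track carefully which regularity of the stochastic convolution $z$ is actually consumed. Under the weaker assumption \eqref{1.3} the process $z$ defined in \eqref{2p2} still admits a version with $\P$-a.s.\ paths in $\C([0,T]\times[0,1])$ (as recorded just after \eqref{2p2}), even though we no longer have $z \in \L^2(0,T;\H_0^1(\1))$. The point I would stress is that the proof of Theorem \ref{thm2.4} producing the weak solution $v$ of the random system \eqref{2.4} uses $z$ only through its continuity on $[0,T]\times[0,1]$ --- equivalently, its boundedness in every $\L^r(\1)$ --- and through the uniform convergence $z_n \to z$ of Lemma 3.3 in \cite{MTMSGBH}; the stronger bound $z \in \L^2(0,T;\H_0^1(\1))$ is never invoked in Steps (1)--(6). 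Hence Theorem \ref{thm2.4} applies verbatim under \eqref{1.3}, yielding for $\P$-a.e.\ $\omega$ a function
\[
v \in \C([0,T];\L^2(\1)) \cap \L^2(0,T;\H_0^1(\1)) \cap \L^{2(\delta+1)}(0,T;\L^{2(\delta+1)}(\1)).
\]

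Next I would promote this weak solution to a mild one, that is, verify that $v$ satisfies the integral identity \eqref{2.5}. The delicate term is the convective one, which I keep in divergence form $\B(v+z) = \frac{1}{\delta+1}\,\partial_\xi\big((v+z)^{\delta+1}\big)$ so as never to differentiate the merely H\"older-continuous $z$. Using the gradient smoothing estimate \eqref{1.5} for $\partial_\xi R(t-s)$ applied to $(v+z)^{\delta+1}$, together with $v + z \in \L^{2(\delta+1)}(0,T;\L^{2(\delta+1)}(\1))$ (so that $(v+z)^{\delta+1} \in \L^2(0,T;\L^2(\1))$ and the kernel singularity is integrable), the Duhamel integral $\int_0^t R(t-s)\B(v(s)+z(s))\d s$ is well defined and $\L^2(\1)$-continuous; the reaction term is controlled by the standard estimate \eqref{1.6}. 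Testing this mild identity against $\varphi \in \H_0^1(\1)$ and matching with the variational form satisfied by $v$ --- the usual equivalence of weak and mild solutions for the analytic semigroup $R(t) = e^{-\nu t\A}$ --- yields \eqref{2.5}. Adding back $z(t) = \int_0^t R(t-s)\G\d\W(s)$ turns \eqref{2.5} into the integral equation \eqref{2.3} for $u = v + z$, so $u$ is a mild solution in the sense of Definition \ref{def2.1}.

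It then remains to read off the regularity \eqref{2.42}. The inclusion $u \in \C([0,T];\L^2(\1))$ follows from $v \in \C([0,T];\L^2(\1))$ and $z \in \C([0,T];\C([0,1])) \subset \C([0,T];\L^2(\1))$; the inclusion $u \in \L^{2(\delta+1)}(0,T;\L^{2(\delta+1)}(\1))$ follows from the corresponding bound on $v$ plus the boundedness of $z$; and for $u \in \L^2(0,T;\C([0,1]))$ I would use the one-dimensional embedding $\H_0^1(\1) \hookrightarrow \C([0,1])$ to pass from $v \in \L^2(0,T;\H_0^1(\1))$, again adding the continuous $z$. Since $u \mapsto u - z$ is a pathwise bijection onto weak solutions of \eqref{2.4}, any uniqueness available for $v$ in the regimes $\delta \in [1,2]$, or $2 < \delta < \infty$ with $\beta\nu > 2^{2(\delta-1)}\alpha^2$, transfers to $u$.

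The step I expect to be the main obstacle is precisely the weak-to-mild identification of the convective term: because $z$ is only H\"older continuous under \eqref{1.3}, $\partial_\xi z$ need not belong to $\L^2(\1)$, so one cannot treat $\B(v+z)$ as an $\L^2(\1)$-valued forcing and must instead rely on the divergence structure together with the singular but integrable gradient kernel \eqref{1.5}. This is also the structural reason why \eqref{1.3} yields only a mild solution and not the strong solution of Theorem \ref{thm2.6}: the absence of $z \in \L^2(0,T;\H_0^1(\1))$ precludes the It\^o energy equality required by Definition \ref{def2.3}.
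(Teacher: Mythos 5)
Your proposal matches the paper's own (very terse) argument: Theorem \ref{thm2.7} is obtained there precisely by applying the pathwise result of Theorem \ref{thm2.4} --- whose proof indeed consumes only $z\in\C([0,T]\times[0,1])$ and the uniform convergence $z_n\to z$, both available under \eqref{1.3} --- and then setting $u=v+z$ to read off \eqref{2.42}. Your additional care with the weak-to-mild identification of the convective term addresses a step the paper leaves implicit (it simply declares the mild form \eqref{2.5} to be the definition of a weak solution of \eqref{2.4} while actually constructing $v$ in the variational form), so nothing essential is missing and the route is the same.
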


	\section{Existence of an Invariant Measure}\label{sec3}\setcounter{equation}{0}
	In this section, we discuss the existence of  an invariant measures for  solutions of the equation \eqref{2.2}. Let us first introduce some notations, which will be used in the upcoming sections. We denote the space of probability measures on $\L^2(\1)$ equipped with the Borel $\sigma$-field $\mathcal{B}$ by $\M_{1}(\L^2(\1))$, the space of signed $\sigma$-additive measures of bounded variation on $\L^2(\1)$ by  $\M_{b}(\L^2(\1))$,  the space of all bounded Borel measurable functions on $\L^2(\1)$ by $\mathrm{B}_b(\L^2(\1))$ and the space of all bounded continuous functions on $\L^2(\1)$ by $\C_b(\L^2(\1))$. On the space $\M_{b}(\L^2(\1))$, we consider $\sigma(\M_{b}(\L^2(\1)), \mathrm{B}_{b}(\L^2(\1)))$, the  $\tau$-topology of convergence against measurable and bounded functions which is much stronger than the usual weak convergence topology $\sigma(\M_{b}(\L^2(\1)),\C_{b}(\L^2(\1)))$ (\cite{MDD}, Section 6.2, \cite{ADOZ}). Let us denote	$\|\cdot\|_{\sup}$ for the supremum norm in $\C_{b}\ (\text{or } \mathrm{B}_b)$. We denote the duality relation between $\varrho \in \M_{b}(\L^2(\1))$ and $\psi \in \mathrm{B}_b(\L^2(\1))$  by $ \varrho(\psi) := \int_{E} \psi \d \varrho$.   In the sequel, we denote the law on $\C(\R^+;\L^2(\1))$ of the Markov process with $x\in\L^2(\1)$ as initial state by $\P_{x}$. We define $\P_{\varrho}(\cdot)=\int_{\L^2(\1)}\P_{x}\varrho(\d x)$, where $\varrho$ be any initial measure on $\L^2(\1)$.

	Let  $\mathrm{E}$ be any Borel subset of $\L^2(\1)$, and the transition probability measure $\mathrm{P}(t,x,\cdot)$ be  defined by $\mathrm{P}(t,x,B)=\mathbb{P}\{u(t,x)\in B\}$, for all $t>0, x \in \mathrm{E}$ and all Borelian sets $B \in \mathcal{B}(\mathrm{E})$, where $u(t,x)$ is the solution of the SGBH equation \eqref{2.2} with the initial condition $x\in\L^2(\1)$. Such a process is shown to exists and Markovian. We define  $\{\mathrm{P}_{t}\}_{t\geq 0},$ a Markov semigroup in the space $\C_{b}(\mathrm{E})$ corresponding to the strong solution of SGBH equation \eqref{2.2}, as 
	\begin{align*}
		(\mathrm{P}_{t}\varphi )(x) = \mathbb{E}[\varphi(u(t,x))], \ \text{ for all }\  \varphi \in \C_{b}(\mathrm{E}).
	\end{align*}
	A Markov semigroup $\mathrm{P}_t,\ t\geq 0$ is  \emph{Feller} if $\mathrm{P}_{t}:\C_{b}(\mathrm{E}) \to \C_{b}(\mathrm{E})$ for arbitrary $t>0 $.	Let us first consider the dual semigroup $\{\mathrm{P}^{*}_{t}\}_{t\geq0}$ in the space $\M_1(\mathrm{E})$, which is defined as 
	\begin{align*}
		\int_{\mathrm{E}} \varphi \d(\mathrm{P}^{*}_{t}\varrho) = \int_{\mathrm{E}}\mathrm{P}_{t}\varphi\d\varrho,
	\end{align*}
	for all $\varphi \in \C_{b}(\mathrm{E}) \text{ and } \varrho \in \M_1(\mathrm{E})$.
	A measure $\varrho \in \M_1(\mathrm{E})$ is called \emph{invariant} if $\mathrm{P}^{*}_{t}\varrho = \varrho $ for all $t\geq 0 $.	Under assumptions \eqref{1.3} and \eqref{13},  we prove the existence of  an invariant measure for the SGBH equation \eqref{2.2} for the following two different cases:
	\begin{itemize}
		\item[(i)] The noise coefficient has finite trace  (assumption \eqref{13})  and without any restriction on $\delta$ ($\beta\nu>2^{2(\delta-1)}\alpha^2$ for $2<\delta<\infty$).
		\item[(ii)] The general case (assumption \eqref{1.3}) with the restriction on $\delta\in[1,2)$.
	\end{itemize} 
	\vskip 0.2 cm 
	\noindent 
	\textbf{Case (i):} \emph{$\Tr(\G\G^*)<\infty$.}  We state and prove the existence of the invariant measure for this case in the following Theorem:
	\begin{theorem}\label{thEx1}
		Let us take the initial data $x \in \L^2(\1)$.  Then there exists an invariant measure for the system \eqref{2.2} with support in $\H_0^1(\1)$ ($\beta\nu>2^{2(\delta-1)}\alpha^2,$ for $2<\delta<\infty$). 
	\end{theorem}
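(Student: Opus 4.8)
The plan is to obtain the invariant measure through the classical Krylov--Bogoliubov averaging procedure. Its two ingredients are a uniform-in-time moment estimate, which produces tightness of the time-averaged measures via the \emph{compact} embedding $\H_0^1(\1)\subset\L^2(\1)$, together with the Feller property of the Markovian semigroup $\mathrm{P}_t$. Since Theorem \ref{thm2.6} furnishes a unique strong solution $u(\cdot,x)$ of \eqref{2.2} in the relevant parameter range (and the uniqueness estimates \eqref{U10} and \eqref{U13} yield continuous dependence on the initial datum, hence the Feller property of $\mathrm{P}_t$), the only genuine task is the a priori estimate. First I would take expectations in the energy equality of Definition \ref{def2.3}; because the stochastic integral $\int_0^t(\G\d\W(s),u(s))$ is a zero-mean martingale, it drops out and leaves an identity involving $\E\|u(t)\|_{\L^2}^2$, the dissipation $2\nu\int_0^t\E\|\partial_{\xi}u(s)\|_{\L^2}^2\d s$, the damping $2\beta\gamma\int_0^t\E\|u(s)\|_{\L^2}^2\d s$, the coercive term $2\beta\int_0^t\E\|u(s)\|_{\L^{2(\delta+1)}}^{2(\delta+1)}\d s$, the trace term $\Tr(\G\G^*)t$, and the sign-indefinite reaction contribution $2\beta(1+\gamma)\int_0^t\E(u^{\delta+1}(s),u(s))\d s$. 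Note that the advection term contributes nothing here, since $b(u,u,u)=0$ by \eqref{2.1}, so the condition $\beta\nu>2^{2(\delta-1)}\alpha^2$ enters only through uniqueness and not through this bound.

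The decisive manipulation is to absorb the reaction term. Using $(u^{\delta+1},u)\le\|u\|_{\L^{\delta+2}}^{\delta+2}$ together with the interpolation inequality $\|u\|_{\L^{\delta+2}}\le\|u\|_{\L^2}^{1-\theta}\|u\|_{\L^{2(\delta+1)}}^{\theta}$ (with the admissible exponent $\theta$) and Young's inequality, one bounds $2\beta(1+\gamma)\|u\|_{\L^{\delta+2}}^{\delta+2}$ by $\beta\|u\|_{\L^{2(\delta+1)}}^{2(\delta+1)}+C\|u\|_{\L^2}^2+C'$, the first summand being absorbed into the coercive term on the left and the $\|u\|_{\L^2}^2$ summand into the damping. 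This produces the differential inequality
\[
\frac{\d}{\d t}\E\|u(t)\|_{\L^2}^2+2\nu\,\E\|\partial_{\xi}u(t)\|_{\L^2}^2+\beta\gamma\,\E\|u(t)\|_{\L^2}^2\le C_0,
\]
where $C_0=C_0(\alpha,\beta,\gamma,\delta,\nu,\Tr(\G\G^*))$ is finite by assumption \eqref{13}. Discarding the nonnegative dissipation term and applying Gronwall's lemma gives the uniform bound $\sup_{t\ge0}\E\|u(t)\|_{\L^2}^2\le\|x\|_{\L^2}^2+C_0/(\beta\gamma)$, while integrating the inequality from $0$ to $T$ and retaining the dissipation term yields
\[
\frac{1}{T}\int_0^T\E\|\partial_{\xi}u(s)\|_{\L^2}^2\,\d s\le\frac{\|x\|_{\L^2}^2}{2\nu T}+\frac{C_0}{2\nu}.
\]
By the Poincar\'e inequality the left-hand side controls $\frac{1}{T}\int_0^T\E\|u(s)\|_{\H_0^1}^2\d s$, which is therefore bounded uniformly in $T\ge1$ by a constant $K$.

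With these bounds in hand, I would define the Krylov--Bogoliubov averages $R_T(x,\Gamma):=\frac{1}{T}\int_0^T\mathrm{P}(s,x,\Gamma)\,\d s$ for Borel $\Gamma\subset\L^2(\1)$. Since the ball $\{u:\|u\|_{\H_0^1}\le\varrho\}$ is relatively compact in $\L^2(\1)$ and, by Chebyshev's inequality, $R_T\bigl(\{\|u\|_{\H_0^1}>\varrho\}\bigr)\le\varrho^{-2}\,\frac{1}{T}\int_0^T\E\|u(s)\|_{\H_0^1}^2\d s\le K/\varrho^2$, the family $\{R_T\}_{T\ge1}$ is tight. Prokhorov's theorem then gives a subsequence $R_{T_n}$ converging weakly to some $\mu\in\M_1(\L^2(\1))$, and the Feller property together with the standard Krylov--Bogoliubov argument shows that $\mu$ is invariant, i.e. $\mathrm{P}_t^{*}\mu=\mu$ for all $t\ge0$. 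Finally, since $u\mapsto\|u\|_{\H_0^1}^2$ (set to $+\infty$ off $\H_0^1$) is lower semicontinuous on $\L^2(\1)$, the portmanteau theorem applied to its continuous truncations and monotone convergence give $\int_{\L^2(\1)}\|u\|_{\H_0^1}^2\,\mu(\d u)\le\liminf_n\int\|u\|_{\H_0^1}^2\,R_{T_n}(\d u)\le K<\infty$, whence $\mu(\H_0^1(\1))=1$, i.e. $\mu$ is supported in $\H_0^1(\1)$. I expect the main obstacle to be exactly the reaction-term absorption in the second paragraph: one must check that the interpolation exponent $\theta(\delta+2)$ stays strictly below $2(\delta+1)$ for every $\delta\ge1$ so that Young's inequality leaves a clean $\|u\|_{\L^2}^2$ remainder, and that the resulting constant $C_0$ is genuinely independent of time, which is what makes both the uniform and the averaged bounds work.
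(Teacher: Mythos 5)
Your proposal is essentially the paper's own argument: Itô's formula (the energy equality) for $\|u(t)\|_{\L^2}^2$, expectation to kill the martingale, absorption of the reaction term $2\beta(1+\gamma)(u^{\delta+1},u)$ into the coercive $\L^{2(\delta+1)}$ term, a uniform-in-$T$ bound on $\frac{1}{T}\int_0^T\E\|\partial_{\xi}u(s)\|_{\L^2}^2\d s$, Markov/Chebyshev, tightness via the compact embedding $\H_0^1(\1)\subset\L^2(\1)$, and Krylov--Bogoliubov. The only point to tighten is that the $C\|u\|_{\L^2}^2$ remainder from Young's inequality cannot in general be absorbed into the damping $2\beta\gamma\|u\|_{\L^2}^2$ (its constant may exceed $2\beta\gamma$); instead, as the paper does after \eqref{5.14}, one uses $\L^{2(\delta+1)}(\1)\subset\L^2(\1)$ and a further Young step to trade it for $\tfrac{\beta}{2}\|u\|_{\L^{2(\delta+1)}}^{2(\delta+1)}$ plus a constant, after which your differential inequality and all subsequent steps go through.
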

	\begin{proof}
		Applying infinite dimensional It\^o's formula to the process $\|u(\cdot)\|_{\L^2}^2$, we find
		\begin{align}\label{5.14}\nonumber
			&\|u(t)\|_{\L^2}^2+2\nu\int_{0}^{t}\|\partial_{\xi} u(s)\|_{\L^2}^2\d s +\beta\int_{0}^{t} \|u(s)\|_{\L^{2(\delta+1)}}^{2(\delta+1)}\d s \\&=\|x\|^{2}_{\L^2}  +\Tr(\G\G^{*})t+ \beta(1+\gamma^2)\int_{0}^{t}\|u(s)\|_{\L^2}^2\d s+2\int_{0}^{t}	( \G \d \W(s), u(s)) ,
		\end{align} for all $t\in[0,T]$, $\P$-a.s., where we have used \eqref{2.1}. Using $\L^{2(\delta+1)}(\1)\subset \L^2(\1)$, H\"older's and Young's inequalities  to estimate  the term $\beta(1+\gamma^2)\int_{0}^{t}\|u(s)\|_{\L^2}^2\d s$, we obtain
		\begin{align*}
			\beta(1+\gamma^2)\int_{0}^{t}\|u(s)\|_{\L^2}^2\d s &\leq \beta(1+\gamma^2)\bigg(\int_{0}^{t}\|u(s)\|_{\L^{2(\delta+1)}}^{2(\delta+1)}\d s\bigg)^{\frac{1}{\delta+1}}t^{\frac{\delta}{\delta+1}}
			\\& \leq \frac{\beta}{2}\int_{0}^{t}\|u(s)\|_{\L^{2(\delta+1)}}^{2(\delta+1)}\d s+ C (\beta,\delta)t,
		\end{align*}where the constant $C(\beta,\delta)=\big(\frac{2}{\beta(\delta+1)}\big)^{\frac{1}{\delta}}\frac{\delta}{\delta+1}$. Using the above estimate in \eqref{5.14}, we have 
		\begin{align}\label{5.15}\nonumber
			&\|u(t)\|_{\L^2}^2+2\nu\int_{0}^{t}\|\partial_{\xi} u(s)\|_{\L^2}^2\d s +\frac{\beta}{2}\int_{0}^{t} \|u(s)\|_{\L^{2(\delta+1)}}^{2(\delta+1)}\d s \\&\leq \|x\|^{2}_{\L^2}  +(\Tr(\G\G^{*})+C)t+ 2\int_{0}^{t}	( \G_{n} \d \W(s), u(s)) ,
		\end{align}for all $t\in[0,T]$, $\P$-a.s.
		Taking expectation in  \eqref{5.15}, we obtain
		\begin{align}\label{Expectation}
			\E\left[\|u(t)\|_{\L^2}^2+2\nu\int_{0}^{t}\|\partial_{\xi} u(s)\|_{\L^2}^2\d s +\frac{\beta}{2}\int_{0}^{t} \|u(s)\|_{\L^{2(\delta+1)}}^{2(\delta+1)}\d s \right]\leq \|x\|_{\L^2}^2+(\Tr(\G\G^*)+C)t,
		\end{align}where we have used the fact that the final term is a martingale. Thus, for all $t>T_0$, we have
		\begin{align*}
			\frac{2\nu}{t}\E\bigg[\int_{0}^{t}\|\partial_{\xi} u(s)\|_{\L^2}^2\d s\bigg]\leq \frac{1}{T_0}\|x\|_{\L^2}^2+\Tr(\G\G^*)+C.
		\end{align*}An application of Markov's inequality yields
		\begin{align}\label{3.1}\nonumber
			\lim_{M\to\infty}\sup_{T>T_0}\bigg[\frac{1}{T}\int_{0}^{T}\P\{\|\partial_{\xi}u(t)\|_{\L^2}^2>M\}\d t\bigg] &\leq 	\lim_{M\to\infty}\sup_{T>T_0}\frac{1}{M^2}\E\bigg[\frac{1}{T}\int_{0}^{T}\|\partial_{\xi} u(s)\|_{\L^2}^2\d s\bigg] \nonumber\\& \leq \lim_{M\to\infty}\sup_{T>T_0}\frac{1}{M^2}\bigg[\frac{1}{T_0}\|x\|_{\L^2}^2+\Tr(\G\G^*)+C\bigg] \nonumber\\& = 0.
		\end{align}
		Using the estimate \eqref{3.1} and  the compact embedding $\H_0^1 (\1)\subset \L^2(\1)$, it is clear from the standard argument that the sequence of probability measures $\mu_{t,x}(\cdot)=\frac{1}{t}\int_{0}^{t}\mathrm{P}_s(0,\cdot)\d s$ is tight. That is, for each $\epsilon>0$, there exist a compact subset $K \subset \L^2(\1)$ such that $\mu_{\epsilon}(K^c)\leq \epsilon$ for all $t>0$ and hence by Krylov-Bogoliubov theorem (see \cite{CHKH}), $\mu_{t,x} \to \mu$, weakly for $n\to\infty$ and the measure $\mu$ is an invariant measure for the transition semigroup $(\mathrm{P}_t)_{t\geq 0}$, and is defined as \begin{align*}
			\mathrm{P}_t \varphi(x)=\E[\varphi(u(t,x))],
		\end{align*}for all $\varphi \in \C_b(\L^2(\1))$, where $u(\cdot)$ is the unique strong solution of \eqref{2.2} with the initial data $x\in \L^2(\1)$.
	\end{proof}\noindent
	\vskip 0.2 cm 
	\noindent 
	\textbf{Case (ii):} \emph{The general assumption \eqref{1.3}.}  
	
	Let us  now write the problem in a different form. For any $\kappa>0$, we set $R_{\kappa}(t)=e^{-\kappa t}R(t), t\geq0$. Then the mild solution of \eqref{2.2} with the initial data $x=0$, is given by the integral form 
	\begin{align}\label{3.2}
		u(t)= -\alpha\int_{0}^{t}R_{\kappa}(t-s)\B(u(s))\d s +\beta\int_{0}^{t}R_{\kappa}(t-s)\c(u(s))\d s + z_{\kappa}(t),
	\end{align}where $z_{\kappa}(t)= \int_{0}^{t}R_{\kappa}(t-s)\G\d\W(s)$, and $\kappa>0$ will be fixed later.
	Now setting $v(t)=u(t)-z_{\kappa}$ and transform the problem associated with \eqref{3.2} into the initial value problem
	
	\begin{equation}\label{3.3}
		\left\{
		\begin{aligned} 
			v'(t)&=-\nu\A v(t)-\alpha \B(v(t)+z_{\kappa}(t))+\beta\c(v(t)+z_{\kappa}(t))+\kappa z_{\kappa}(t), \\
			v(0)&=0.
		\end{aligned}
		\right.
	\end{equation}Also note that the system \eqref{3.3} defines a transition semigroup $\mathrm{P}_t, t\geq 0$, on $\mathrm{B}_b(\L^2(\1)),$ which holds the Feller property, since the solution of \eqref{3.3} depends continuously on the initial data. With the help of general theory developed in Chapter 6, \cite{GDJZ}, in order to prove the existence of an invariant measure for \eqref{3.3}, it is sufficient to show that the family of measures 
	\begin{align*}
		\bigg\{\frac{1}{T}\int_{0}^{T}\mathrm{P}_s(0,\cdot)\d s \bigg\},
	\end{align*}is tight on $\L^2(\1)$. Before going to prove the tightness of the family of measures defined above, we require the following Lemma.
	\begin{lemma}\label{lemEx1}
		For any $\epsilon>0$, there exists $K_{\epsilon}$ such that for all $T>0$, \begin{align}\label{3.4}
			\frac{1}{T}\int_{0}^{T} \P(\|v(s)\|_{\L^2}^2>K_{\epsilon})\d s <\epsilon.
		\end{align}
	\end{lemma}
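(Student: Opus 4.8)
The plan is to bound the time-averaged second moment $\frac{1}{T}\int_0^T\E\|v(s)\|_{\L^2}^2\,\d s$ uniformly in $T$, from which the probability estimate \eqref{3.4} follows immediately by Markov's inequality. For each fixed $\omega$ the path $z_\kappa$ is a given continuous function and $v$ solves the deterministic equation \eqref{3.3}, so I would test \eqref{3.3} against $v(t)$ exactly as in Step (2) of the proof of Theorem \ref{thm2.4}. The only new term relative to the computation leading to \eqref{213} is $\kappa(z_\kappa,v)$, which I split by Young's inequality as $\kappa(z_\kappa,v)\le\frac{\nu\pi^2}{4}\|v\|_{\L^2}^2+C\|z_\kappa\|_{\L^2}^2$. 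The decisive structural point is that in \eqref{213} the quadratic feedback $C(\beta,\gamma)\|v\|_{\L^2}^2$ has already been absorbed into the super-quadratic dissipation $\frac{\beta}{4}\|v\|_{\L^{2(\delta+1)}}^{2(\delta+1)}$, so the full diffusion term survives; applying the Poincaré inequality $\nu\|\partial_\xi v\|_{\L^2}^2\ge\nu\pi^2\|v\|_{\L^2}^2$ then creates genuine dissipation in the $\L^2$ norm itself and yields
\begin{align*}
\frac{\d}{\d t}\|v\|_{\L^2}^2 + \frac{\nu\pi^2}{2}\|v\|_{\L^2}^2 + \frac{\beta}{4}\|v\|_{\L^{2(\delta+1)}}^{2(\delta+1)} \le C_1 + C_3\big(\|z_\kappa\|_{\L^{2(\delta+1)}}^{2(\delta+1)} + \|z_\kappa\|_{\L^2}^2\big),
\end{align*}
for a.e. $t$, with constants depending only on $\nu,\alpha,\beta,\gamma,\delta,\kappa$.

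Next I would integrate this inequality on $[0,T]$, discard the nonnegative terms $\|v(T)\|_{\L^2}^2$ and $\frac{\beta}{4}\int_0^T\|v\|_{\L^{2(\delta+1)}}^{2(\delta+1)}$, and use the initial condition $v(0)=0$. Dividing by $T$ and taking expectations gives
\begin{align*}
\frac{\nu\pi^2}{2}\cdot\frac{1}{T}\int_0^T\E\|v(s)\|_{\L^2}^2\,\d s \le C_1 + \frac{C_3}{T}\int_0^T\E\big(\|z_\kappa(s)\|_{\L^{2(\delta+1)}}^{2(\delta+1)} + \|z_\kappa(s)\|_{\L^2}^2\big)\,\d s.
\end{align*}
Because $z_\kappa(t)=\int_0^t R_\kappa(t-s)\G\,\d\W(s)$ is a centred Gaussian (Ornstein--Uhlenbeck) process and $R_\kappa(t)=e^{-\kappa t}R(t)$ combines the spectral gap of $\A$ with the exponential factor $e^{-\kappa t}$, its pointwise variance $\E|z_\kappa(t,\xi)|^2$ is bounded uniformly in $t$; hence, via the Gaussian moment identity $\E|z_\kappa(t,\xi)|^{2(\delta+1)}=c_\delta\,(\E|z_\kappa(t,\xi)|^2)^{\delta+1}$, the quantity $M:=\sup_{t\ge0}\E\big(\|z_\kappa(t)\|_{\L^{2(\delta+1)}}^{2(\delta+1)}+\|z_\kappa(t)\|_{\L^2}^2\big)$ is finite for $1\le\delta<2$. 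Consequently $\frac{1}{T}\int_0^T\E\|v(s)\|_{\L^2}^2\,\d s\le\Lambda:=\frac{2}{\nu\pi^2}(C_1+C_3M)$ uniformly in $T$.

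Finally, I would invoke Fubini's theorem together with Markov's inequality applied to the finite measure $\frac{1}{T}\,\d s\otimes\P$ on $[0,T]\times\Omega$:
\begin{align*}
\frac{1}{T}\int_0^T\P\big(\|v(s)\|_{\L^2}^2 > K\big)\,\d s \le \frac{1}{K}\cdot\frac{1}{T}\int_0^T\E\|v(s)\|_{\L^2}^2\,\d s \le \frac{\Lambda}{K},
\end{align*}
so that choosing $K_\epsilon:=\Lambda/\epsilon$ produces \eqref{3.4} for every $T>0$.

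The routine energy bookkeeping is not where the difficulty lies; the two delicate points are the net dissipativity and, above all, the uniform-in-time moment control of $z_\kappa$. For the former, one must note that the Poincaré constant $\nu\pi^2$ alone need not beat the linear feedback $C(\beta,\gamma)$, so it is essential that the super-quadratic term $\|v\|_{\L^{2(\delta+1)}}^{2(\delta+1)}$ (available because $\delta\ge1$) swallows the feedback, as in \eqref{213}, before Poincaré is applied. For the latter, under the weaker assumption \eqref{1.3} the convolution $z_\kappa$ need not take values in $\H_0^1(\1)$, so the finite-trace reasoning used in Case (i) is unavailable and everything rests instead on the uniform boundedness of the Gaussian moments of $z_\kappa$; verifying that $M<\infty$ is precisely the step where the restriction $\delta\in[1,2)$ is used, and it is the genuine crux of the lemma.
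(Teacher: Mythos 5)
Your argument is correct and reaches the stated bound, but it takes a slightly different route from the paper. You derive the dissipative inequality $\frac{\d}{\d t}\|v\|_{\L^2}^2+c\|v\|_{\L^2}^2\leq C+C\big(\|z_{\kappa}\|_{\L^2}^2+\|z_{\kappa}\|_{\L^{2(\delta+1)}}^{2(\delta+1)}\big)$ exactly as the paper does in \eqref{3.9} (including the key structural observation that the quadratic feedback must first be absorbed into the super-quadratic term $\|v+z_{\kappa}\|_{\L^{2(\delta+1)}}^{2(\delta+1)}$), but then you integrate in time, take expectations to bound $\frac{1}{T}\int_0^T\E\|v(s)\|_{\L^2}^2\,\d s$ uniformly in $T$, and finish with Chebyshev's inequality. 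The paper instead follows the Da Prato--Gatarek device: it sets $\zeta(t)=\log(\|v(t)\|_{\L^2}^2\vee K)$, multiplies \eqref{3.9} by $\|v(t)\|_{\L^2}^{-2}\chi_{\{\|v(t)\|_{\L^2}^2\geq K\}}$, and uses $\zeta(t)\geq\zeta(0)$ to extract $\int_0^t\P(\|v(s)\|_{\L^2}^2\geq K)\,\d s$ directly with a prefactor $1/K$. Both routes require exactly the same external input, namely $\sup_{t>0}\E\big(\|z_{\kappa}(t)\|_{\L^2}^2+\|z_{\kappa}(t)\|_{\L^{2(\delta+1)}}^{2(\delta+1)}\big)<\infty$, and both deliver the same $1/K$ decay, so in this setting the two are essentially interchangeable; the logarithmic trick is the more robust tool when the second moment of $\|v\|_{\L^2}^2$ itself cannot be controlled, which is not an issue here.

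One remark on your closing commentary: the claim that the restriction $\delta\in[1,2)$ enters through the finiteness of $M=\sup_{t}\E\big(\|z_{\kappa}(t)\|_{\L^2}^2+\|z_{\kappa}(t)\|_{\L^{2(\delta+1)}}^{2(\delta+1)}\big)$ is a misattribution. Since $z_{\kappa}(t,\xi)$ is a centred Gaussian with variance bounded uniformly in $(t,\xi)$, all of its even moments are uniformly bounded, so $M<\infty$ for every $\delta\geq1$; indeed the lemma itself carries no restriction on $\delta$. The constraint $\delta\in[1,2)$ is used only later, in the proof of Theorem \ref{thEx2}, where the regularity bootstrap forces the choice $\sigma<\frac{2-\delta}{4\delta}$. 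This does not affect the validity of your proof of the lemma.
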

	\begin{proof}
		Let  $v$ be the solution of \eqref{3.3}. Taking the inner product of first equation of the system  \eqref{3.3} with $v(\cdot)$, we get 
		\begin{align}\label{3.5}\nonumber
			\frac{1}{2}\frac{\d}{\d t}\|v\|_{\L^2}^2+\nu\|\partial_{\xi} v\|_{\L^2}^2&= \frac{\alpha}{\delta+1}((v+z_{\kappa})^{\delta+1},\partial_{\xi} v)+\beta(1+\gamma)((v+z_{\kappa})^{\delta+1},v)\\&\quad-\beta\gamma(v+z_{\kappa},v)-\beta((v+z_{\kappa})^{2\delta+1},v)+\kappa(z_{\kappa},v).
		\end{align} Using \eqref{2.1}, Taylor's formula, H\"older's % (with $2(\delta+1), \frac{2(\delta+1)}{\delta},2(\delta+1)$), 
		and	Young's inequalities, %(with $2,2$) (with $\frac{\delta+1}{\delta}, \delta+1$) 
		we estimate the term $\frac{\alpha}{\delta+1}((v+z_{\kappa})^{\delta+1},\partial_{\xi} v)$ as 
		
		\begin{align}\label{3.6}\nonumber
			&\frac{\alpha}{\delta+1}((v+z_{\kappa})^{\delta+1},\partial_{\xi} v) \\&\leq\frac{\alpha}{\delta+1}(v^{\delta+1},\partial_{\xi} v)+\alpha(z_{\kappa}(\theta v+(1-\theta)z_{\kappa})^{\delta},\partial_{\xi} v) \nonumber\\& 
			\leq \frac{\nu}{2}\|\partial_{\xi} v\|_{\L^2}^2+\frac{\alpha^2}{2\nu}\|z_{\kappa}\|_{\L^{2(\delta+1)}}^2\|v+z_{\kappa}\|_{\L^{2(\delta+1)}}^{2\delta} \nonumber\\& 
			\leq \frac{\nu}{2}\|\partial_{\xi} v\|_{\L^2}^2+ \frac{\beta}{8}\|v+z_{\kappa}\|_{\L^{2(\delta+1)}}^{2(\delta+1)}+\frac{1}{\delta+1}\bigg(
			\frac{\alpha^2}{2\nu}\bigg)^{\delta+1}\bigg(\frac{8\delta}{\beta(\delta+1)}\bigg)^{\delta}\|z_{\kappa}\|_{\L^{2(\delta+1)}}^{2(\delta+1)}.
		\end{align}
		%Let us consider the term $\beta(1+\gamma)((v+z_{\kappa})^{\delta+1},v)$, using H\"older's inequality, %(with $2,2$ and $2,2$), 
		%the embedding $\L^2(\1)\subset \L^{2(\delta+1)} (\1)$, and Young's inequality,% (with $\delta+1, \frac{\delta+1}{\delta}$), 
		Next, we take the term $\beta(1+\gamma)((v+z_{\kappa})^{\delta+1},v)-\beta\gamma(v+z_{\kappa},v)-\beta((v+z_{\kappa})^{2\delta+1},v)$ and estimate it using the embedding $\L^2(\1)\subset \L^{2(\delta+1)} (\1)$, H\"older's and Young's inequalities as
		\begin{align}\label{3.7}\nonumber
			&\beta(1+\gamma)((v+z_{\kappa})^{\delta+1},v+z_{\kappa}-z_{\kappa})-\beta\gamma(v+z_{\kappa},v)-\beta((v+z_{\kappa})^{2\delta+1},v+z_{\kappa}-z_{\kappa})\nonumber \\& \leq
			\beta(1+\gamma)\|v+z_{\kappa}\|_{\L^{2(\delta+1)}}^{\delta+1}\|v+z_{\kappa}-z_{\kappa}\|_{\L^2}-\frac{\beta\gamma}{2}\|v\|_{\L^2}^2+\frac{\beta\gamma}{2}\|z_{\kappa}\|_{\L^2}^2-\beta\|v+z_{\kappa}\|_{\L^{2(\delta+1)}}^{2(\delta+1)}\nonumber\\&\quad+\beta\|v+z_{\kappa}\|_{\L^{2(\delta+1)}}^{2\delta+1}\|z_{\kappa}\|_{\L^{2(\delta+1)}}
			\nonumber\\& 
			\leq -\frac{3\beta}{8}\|v+z_{\kappa}\|_{\L^{2(\delta+1)}}^{2(\delta+1)}+4\beta(1+\gamma)^2\|v+z_{\kappa}\|_{\L^2}^2+\bigg(4\beta(1+\gamma)^2+\frac{\beta\gamma}{2}\bigg)\|z_{\kappa}\|_{\L^2}^2\nonumber\\&\quad-\frac{\beta\gamma}{2}\|v\|_{\L^2}^2+\frac{1}{2(\delta+1)}\bigg(\frac{2\delta+1}{\beta(\delta+1)}\bigg)^{\frac{1}{2\delta+1}}\|z_{\kappa}\|_{\L^{2(\delta+1)}}^{2(\delta+1)} \nonumber\\& 
			\leq 
			-\frac{3\beta}{8}\|v+z_{\kappa}\|_{\L^{2(\delta+1)}}^{2(\delta+1)}+4\beta(1+\gamma)^2\|v+z_{\kappa}\|_{\L^{2(\delta+1)}}^2+\bigg(4\beta(1+\gamma)^2+\frac{\beta\gamma}{2}\bigg)\|z_{\kappa}\|_{\L^2}^2\nonumber\\&\quad-\frac{\beta\gamma}{2}\|v\|_{\L^2}^2+\frac{1}{2(\delta+1)}\bigg(\frac{2\delta+1}{\beta(\delta+1)}\bigg)^{\frac{1}{2\delta+1}}\|z_{\kappa}\|_{\L^{2(\delta+1)}}^{2(\delta+1)}
			\nonumber\\&	\leq 
			-\frac{\beta}{4}\|v+z_{\kappa}\|_{\L^{2(\delta+1)}}^{2(\delta+1)}+\frac{(2\beta(1+\gamma)^2)^{\delta\frac{\delta+1}{\delta}}}{\delta+1}\bigg(\frac{8}{\beta(\delta+1)}\bigg)^{\frac{1}{\delta}}+\bigg(4\beta(1+\gamma)^2+\frac{\beta\gamma}{2}\bigg)\|z_{\kappa}\|_{\L^2}^2\nonumber\\&\quad-\frac{\beta\gamma}{2}\|v\|_{\L^2}^2+\frac{1}{2(\delta+1)}\bigg(\frac{2\delta+1}{\beta(\delta+1)}\bigg)^{\frac{1}{2\delta+1}}\|z_{\kappa}\|_{\L^{2(\delta+1)}}^{2(\delta+1)}.
		\end{align}\iffalse
		Let us consider the term $-\beta\gamma(v+z_{\kappa},v)$, using H\"older's inequality we obtain
		\begin{align}\label{Ex7}
			-\beta\gamma(v+z_{\kappa},v)\leq -\frac{\beta\gamma}{2}\|v\|_{\L^2}^2+\frac{\beta\gamma}{2}\|z_{\kappa}\|_{\L^2}^2.
		\end{align}\fi Once again using Cauchy-Schwarz inequality and H\"older's inequality, we estimate the term $	\kappa(z_{\kappa},v)$ as
		\begin{align}\label{3.8}
			\kappa(z_{\kappa},v)\leq \frac{\beta\gamma}{4}\|v\|_{\L^2}^2+\frac{\kappa^2}{\beta\gamma}\|z_{\kappa}\|_{\L^2}^2.	
		\end{align}\iffalse Let us consider the term $-\beta((v+z_{\kappa})^{2\delta+1},v)$, using H\"older's inequality %(with $\frac{2(\delta+1)}{2\delta+1}, 2(\delta+1)$), 
		and Young's inequality, %(with $\frac{2(\delta+1)}{2\delta+1}, 2(\delta+1)$), 
		we obtain
		\begin{align}\label{Ex9}\nonumber
			&-\beta((v+z_{\kappa})^{2\delta+1},v+z_{\kappa}-z_{\kappa}) \nonumber \\&\leq 
			-\beta\|v+z_{\kappa}\|_{\L^{2(\delta+1)}}^{2(\delta+1)}+\beta\|v+z_{\kappa}\|_{\L^{2(\delta+1)}}^{2\delta+1}\|z_{\kappa}\|_{\L^{2(\delta+1)}}   \nonumber \\& \leq 
			-\frac{\beta}{2}\|v+z_{\kappa}\|_{\L^{2(\delta+1)}}^{2(\delta+1)}+\frac{\beta}{2(\delta+1)}\bigg(\frac{2\delta+1}{\delta+1}\bigg)^{2\delta+1}\|z_{\kappa}\|_{\L^{2(\delta+1)}}^{2(\delta+1)}.
		\end{align}\fi Substituting \eqref{3.6}-\eqref{3.8} in \eqref{3.5}, and using Poincar\'e inequality $\pi^2\|v\|_{\L^2}^2\leq \|\partial_{\xi} v\|_{\L^2}^2$, we find
		\begin{align}\label{3.9}\nonumber
			&\frac{\d}{\d t}\|v\|_{\L^2}^2+\bigg(\nu\pi^2+\frac{\beta\gamma}{2}\bigg)\| v\|_{\L^2}^2+\frac{\beta}{4}\|v+z_{\kappa}\|_{\L^{2(\delta+1)}}^{2(\delta+1)} \nonumber\\&\leq 
			C(\beta,\gamma,\kappa)\|z_{\kappa}\|_{\L^2}^2+C(\nu,\alpha,\beta,\delta)\|z_{\kappa}\|_{\L^{2(\delta+1)}}^{2(\delta+1)}+C(\beta,\gamma,\delta),
		\end{align}where $C(\beta,\gamma,\kappa)=\bigg(8\beta(1+\gamma)^2+\beta\gamma+\frac{2\kappa^2}{\beta\gamma}\bigg)$, $C(\beta,\gamma,\delta)=\frac{2\delta(2\beta(1+\gamma)^2)^{\frac{\delta+1}{\delta}}}{\delta+1}\bigg(\frac{8}{\beta(\delta+1)}\bigg)^{\frac{1}{\delta}}$, and $C(\nu,\alpha,\beta,\delta)=\bigg(\frac{1}{\delta+1}\bigg(
		\frac{\alpha^2}{\nu}\bigg)^{\delta+1}\bigg(\frac{8\delta}{2\beta(\delta+1)}\bigg)^{\delta}+\frac{1}{(\delta+1)}\bigg(\frac{2\delta+1}{\beta(\delta+1)}\bigg)^{\frac{1}{2\delta+1}}\bigg)$.
		
		Finally, we proceed with a similar argument as in the work Chapter 14, \cite{GDG}. We fix $K>1$ and define 
		\begin{align*}
			\zeta(t) = \log(\|v(t)\|_{\L^2}^2\vee K). 
		\end{align*}
		Then we have 
		\begin{align*}
			\zeta'(t) = \frac{1}{\|v(t)\|_{\L^2}^2}\chi_{\{\|v(t)\|_{\L^2}^2\geq K\}}\frac{\d}{\d t}\|v(t)\|_{\L^2}^2.
		\end{align*}
		On multiplying both sides of \eqref{3.9}, with  $\frac{1}{\|v(t)\|_{\L^2}^2}\chi_{\{\|v(t)\|_{\L^2}^2\geq K\}}$, we get
		\begin{align*}
			&\zeta'(t)+\bigg(\nu\pi^2+\frac{\beta\gamma}{2}\bigg)\chi_{\{\|v(t)\|_{\L^2}^2\geq K\}}\\&\leq \frac{1}{K}\big\{C(\beta,\gamma,\kappa)\|z_{\kappa}\|_{\L^2}^2+C(\nu,\alpha,\beta,\delta)\|z_{\kappa}\|_{\L^{2(\delta+1)}}^{2(\delta+1)}+C(\beta,\gamma,\delta) \big\}.
		\end{align*}Integrating the above inequality from $0$ to $t$ and taking the expectation on both sides, we find
		\begin{align*}
			&\E[\zeta(t)-\zeta(0)]+\bigg(\nu\pi^2+\frac{\beta\gamma}{2}\bigg)\int_{0}^{t}\P(\|v(s)\|_{\L^2}^2\geq K)\d s\\& \leq \frac{1}{K}\int_{0}^{t}\left\{C(\beta,\gamma,\kappa)\E\left[\|z_{\kappa}(s)\|_{\L^2}^2\right]+C(\nu,\alpha,\beta,\delta)\E\left[\|z_{\kappa}(s)\|_{\L^{2(\delta+1)}}^{2(\delta+1)}\right]\nonumber+C(\beta,\gamma,\delta) \right\}\d s.
		\end{align*}Using the fact $\zeta(t)-\zeta(0)= \zeta(t)-\log K=\log\left(\frac{\|v(t)\|_{\L^2}^2\vee K}{K}\right)\geq 0$, we obtain 
		\begin{align}\label{3.10}\nonumber
			&\bigg(\nu\pi^2+\frac{\beta\gamma}{2}\bigg)\frac{1}{t}\int_{0}^{t}\P(\|v(s)\|_{\L^2}^2\geq K)\d s \\& \leq \frac{1}{Kt}\int_{0}^{t}\big\{C(\beta,\gamma,\kappa)\E(\|z_{\kappa}(s)\|_{\L^2}^2)+C(\nu,\alpha,\beta,\delta)
			\E(\|z_{\kappa}(s)\|_{\L^{2(\delta+1)}}^{2(\delta+1)})+C(\beta,\gamma,\delta) \big\}\d s,
		\end{align}hence \eqref{3.4} holds by choosing $K$ sufficiently large.
	\end{proof}
	Now we state our main result of this section, that is, the existence of  the invariant measure for the noise with general  assumption \eqref{1.3}.
	\begin{theorem}\label{thEx2}
		Let $\mathrm{P}_{t}, t\geq 0$ be the transition semigroup corresponding to the solutions of the system \eqref{2.2}. Then there exists an invariant measure for the semigroup   $\mathrm{P}_{t}, t\geq 0$ for $\delta \in[1,2)$.
	\end{theorem}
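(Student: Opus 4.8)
The plan is to run the Krylov--Bogoliubov scheme that was already set up before the statement. As noted there, invoking the general theory of Chapter 6, \cite{GDJZ} it suffices to prove that the time-averaged family
\[
\mu_T(\cdot) = \frac{1}{T}\int_0^T \mathrm{P}_s(0,\cdot)\,\d s, \qquad T>0,
\]
is tight on $\L^2(\1)$; any weak limit point of $\mu_{T_n}$ along a sequence $T_n\to\infty$ is then invariant, because $\mathrm{P}_t$ is Feller (the solution of \eqref{3.3} depends continuously on the initial datum). Thus the whole argument reduces to producing, for each $\epsilon>0$, a set $K_\epsilon$ that is compact in $\L^2(\1)$ and satisfies $\sup_{T>0}\mu_T(K_\epsilon^c)<\epsilon$, and this is exactly where Lemma \ref{lemEx1} will be fed in.

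First I would fix a small $\sigma\in(0,\tfrac14)$ and take $K_\epsilon$ to be a closed ball of $\D(\A^\sigma)$, which embeds compactly into $\L^2(\1)$ since $\A^{-1}$ is compact. Writing $u=v+z_\kappa$ it then suffices to control the two pieces separately. For the Gaussian part, $z_\kappa$ inherits the $\C^\lambda$-Hölder regularity of the stochastic convolution $z$ for every $\lambda<\tfrac12$, so $z_\kappa(s)\in\D(\A^\sigma)$ with $\sup_{s\ge0}\E\|z_\kappa(s)\|_{\D(\A^\sigma)}^{2}<\infty$ (the damping $e^{-\kappa(t-s)}$ making the bound uniform in time), and Markov's inequality makes the laws of $\{z_\kappa(s)\}$ uniformly tight. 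For the deterministic part I would apply $\A^\sigma$ to the mild representation of $v$, writing the Burgers term in divergence form,
\[
v(t)= -\frac{\alpha}{\delta+1}\int_0^t \partial_\xi R_\kappa(t-s)\,(u(s))^{\delta+1}\,\d s + \beta\int_0^t R_\kappa(t-s)\,\c(u(s))\,\d s,
\]
and then estimate $\|\A^\sigma v(t)\|_{\L^2}$ using the smoothing bounds \eqref{1.5}--\eqref{1.7}, the exponential damping, and the $\L^{2(\delta+1)}$-information on $u$ coming from the integrated version of \eqref{3.9}. Moving the spatial derivative onto the semigroup costs only the integrable factor $(t-s)^{-\sigma-1/2}$ and leaves $(u)^{\delta+1}$ to be measured in $\L^2$, controlled by $\|u\|_{\L^{2(\delta+1)}}^{\delta+1}$; combining this with the time-averaged controls furnished by Lemma \ref{lemEx1} and \eqref{3.9} yields a bound on $\frac{1}{T}\int_0^T\P(\|v(s)\|_{\D(\A^\sigma)}>R)\,\d s$ analogous to \eqref{3.4}, so that the laws of $\{v(s)\}$ are tight as well.

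The main obstacle will be the reaction operator $\c(\cdot)$, whose top-order part is $u^{2\delta+1}$: the a priori information available from Lemma \ref{lemEx1} and \eqref{3.9} is only the $\L^2$- and $\L^{2(\delta+1)}$-control of $v$ (and hence of $u$), while $\|\c(u)\|_{\L^2}$ would demand the much stronger $\L^{2(2\delta+1)}$-norm. The way around this is to keep $u^{2\delta+1}$ in the low-integrability space $\L^{p_0}$ with $p_0=\frac{2(\delta+1)}{2\delta+1}<2$ and exploit the stronger $\L^{p_0}\to\L^2$ smoothing of $R_\kappa$ together with a Hölder inequality in time against $\int_0^T\|u\|_{\L^{2(\delta+1)}}^{2(\delta+1)}\,\d s$, with the interpolation and Gagliardo--Nirenberg inequalities of Section \ref{sec2} used to redistribute exponents. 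Carrying out this matching, one finds that the combined kernel remains integrable and the time average closes precisely when $\tfrac{\delta}{2}<1$, i.e. for $\delta\in[1,2)$, which leaves room to pick $\sigma>0$; at $\delta=2$ the exponents become critical and the scheme breaks down, consistent with the restriction in the statement. Once the uniform $\D(\A^\sigma)$-type bounds for both $v$ and $z_\kappa$ are established, tightness of $\{\mu_T\}$ follows by combining the two compact families and applying Markov's inequality, and the Krylov--Bogoliubov theorem then delivers the invariant measure.
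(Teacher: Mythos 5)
Your overall strategy coincides with the paper's: Krylov--Bogoliubov via tightness of the time averages, the compact embedding $\D(\A^{\sigma})\subset\L^{2}(\1)$, the splitting $u=v+z_{\kappa}$, smoothing estimates on the mild form of $v$ with the convective term in divergence form, and Lemma \ref{lemEx1} as the source of the time-averaged control. Your treatment of the reaction term (measuring $u^{2\delta+1}$ in $\L^{p_{0}}$ with $p_{0}=\tfrac{2(\delta+1)}{2\delta+1}$ and pairing in time against $\int\|u\|_{\L^{2(\delta+1)}}^{2(\delta+1)}\d s$) is a legitimate variant of the paper's ($\L^{1}$ plus interpolation) and yields the same type of threshold $\sigma<\tfrac{2-\delta}{4(\delta+1)}$, hence $\delta<2$.

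However, the step for the Burgers term fails as written. If you measure $u^{\delta+1}$ in $\L^{2}$, i.e.\ by $\|u\|_{\L^{2(\delta+1)}}^{\delta+1}$, the only time-integrated quantity available to absorb this power is $\int\|u\|_{\L^{2(\delta+1)}}^{2(\delta+1)}\d s$, which forces a H\"older (or Young) exponent $2$ on the kernel and produces $\int(t-s)^{-2\sigma-1}\d s=+\infty$ for every $\sigma\geq0$; the factor $(t-s)^{-\sigma-1/2}$ is integrable by itself but not after this pairing. You must first interpolate $\|u\|_{\L^{\delta+1}}^{\delta+1}\leq\|u\|_{\L^{2}}^{(\delta+1)/\delta}\|u\|_{\L^{2(\delta+1)}}^{(\delta+1)(\delta-1)/\delta}$, paying the worse $\L^{1}\to\L^{2}$ smoothing factor $(t-s)^{-\sigma-3/4}$ of \eqref{1.5}, so that Young's inequality leaves the integrable singularity $(t-s)^{-(\sigma+3/4)\frac{2\delta}{\delta+1}}$; this is exactly where the paper's condition $\sigma<\tfrac{2-\delta}{4\delta}$, hence $\delta<2$, arises --- the convection term is at least as binding as the reaction term, not a side issue. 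Second, the mild-form estimate must be run over a unit interval, writing $\A^{\sigma}v(t+1)=\A^{\sigma}R(1)v(t)+\int_{t}^{t+1}\cdots$ as in \eqref{3.12}: starting the integrals at $0$ leaves $\int_{0}^{t}\|u(s)\|_{\L^{2(\delta+1)}}^{2(\delta+1)}\d s$ on the right, which grows linearly in $t$ in expectation, so no fixed $K$ makes $\P\{\|\A^{\sigma}v(t)\|_{\L^{2}}>K\}$ small uniformly in $t$ and the time-averaged Markov argument does not close. With these two repairs your scheme becomes the paper's proof.
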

	\begin{proof}
		First we fix $\kappa>0$, for which the Lemma \ref{lemEx1} holds. We have the embedding $\D(\A^{\sigma})\subset \L^2(\1)$, is compact for any $\sigma>0$. In order to prove the tightness property, it is sufficient to show that,
		for any $\epsilon>0$ there exists $K>0$, such that for all $T>1$
		\begin{align}\label{3.11}
			\frac{1}{T}\int_{0}^{T}\P\left\{\|\A^{\sigma}u(t)\|_{\L^2}^2\geq K\right\}\d t <\epsilon.
		\end{align}
		Now onward we fix both $\kappa>0$ and $\sigma <\frac{2-\delta}{4\delta}$ for $\delta\in[1,2)$. For $\sigma\in[0,\frac{1}{4})$, it has been shown in  Lemma 14.4.1, \cite{GDJZ} that 
		\begin{align*}
			M=\sup_{t\geq1}\E[\|\A^{\sigma}z_{\kappa}(t)\|_{\L^2}^2]<+\infty.
		\end{align*}An application of Markov's inequality yields 
		\begin{align*}
			I_T&=\frac{1}{T}\int_{1}^{T}\P\left\{\|\A^{\sigma}z_{\kappa}(t)\|_{\L^2}^2\geq K\right\}\d t  \leq \frac{1}{TK}\int_{1}^{T}\E\left[\|\A^{\sigma}z_{\kappa}(t)\|_{\L^2}^2\right]\d t  \leq \frac{M}{TK}(T-1).
		\end{align*}For the chosen sufficiently large $K$, we can made $I_T$ small uniformly for $T\geq1$. To prove \eqref{3.11}, it is sufficient to show it for the process $u(\cdot)$ replaced by $v(\cdot)$, since $u(\cdot)= v(\cdot)+z(\cdot)$. This will be derived as  did in Theorem 6.1.2, \cite{GDJZ}  by exploiting the regularizing effect of \eqref{2.2}. 	For the mild solution $v(\cdot)$ of \eqref{3.3} and any $t>0$, we have	
		\begin{align*}
			\A^{\sigma}v(t+1)&=\A^{\sigma}R(1)v(t)-\alpha\A^{\sigma}\int_{t}^{t+1}R(t+1-s)\B(v(s)+z_{\kappa}(s))\d s\\&\quad+\beta\A^{\sigma}\int_{t}^{t+1}R(t+1-s)\c(v(s)+z_{\kappa}(s))\d s+\kappa\A^{\sigma}\int_{t}^{t+1}R(t+1-s)z_{\kappa}(s)\d s.
		\end{align*}Taking the $\L^2$-norm of above expression, we obtain
		\begin{align}\label{3.12}\nonumber
			\|\A^{\sigma}v(t+1)\|_{\L^2}&=\|\A^{\sigma}R(1)v(t)\|_{\L^2}+\alpha\bigg\|\A^{\sigma}\int_{t}^{t+1}R(t+1-s)\B(v(s)+z_{\kappa}(s))\d s\bigg\|_{\L^2}\nonumber\\&\quad+\beta\bigg\|\A^{\sigma}\int_{t}^{t+1} R(t+1-s)\c(v(s)+z_{\kappa}(s))\d s\bigg\|_{\L^2}\nonumber\\&\quad+\kappa\bigg\|\A^{\sigma}\int_{t}^{t+1}R(t+1-s)z_{\kappa}(s)\d s\bigg\|_{\L^2}.
		\end{align}
		Using semigroup property (see \eqref{1.5} and \eqref{1.7}), interpolation %(with $r=\delta+1, p=2, q=2(\delta+1)$ gives $\alpha=\frac{1}{\delta}$), 
		and Young's inequalities, %(with $p=\frac{2\delta}{\delta-1},q= \frac{2\delta}{\delta+1}$), 
		we estimate the term $\big\|\A^{\sigma}\int_{0}^{t}R(s)\partial_{\xi} u^{\delta+1}(s)\d s\big\|_{\L^2}$ as 
		\begin{align*}
			\bigg\|\A^{\sigma}\int_{0}^{t}R(s)\partial_{\xi} u^{\delta+1}(s)\d s\bigg\|_{\L^2}&\leq C\int_{0}^{t}s^{-(\sigma+\frac{3}{4})}\|u(s)\|_{\L^{\delta+1}}^{\delta+1}\d s\nonumber\\&\leq C\int_{0}^{t}s^{-(\sigma+\frac{3}{4})}\|u(s)\|_{\L^2}^{\frac{\delta+1}{\delta}}\|u(s)\|_{\L^{2(\delta+1)}}^{\frac{(\delta+1)(\delta-1)}{\delta}}\d s \\&\leq C\bigg( \int_{0}^{t}s^{-(\sigma+\frac{3}{4})\frac{2\delta}{\delta+1}}\|u(s)\|_{\L^2}^2\d s+\int_{0}^{t}\|u(s)\|_{\L^{2(\delta+1)}}^{2(\delta+1)}\d s \bigg)\\& \leq   C\bigg[t^{-(\sigma+\frac{3}{4})\frac{2\delta}{\delta+1}+1}\sup_{s\in[0,t]}\|u(s)\|_{\L^2}^2+\int_{0}^{t}\|u(s)\|_{\L^{2(\delta+1)}}^{2(\delta+1)}\d s\bigg].
		\end{align*}With the help of semigroup property (see \eqref{1.6} and \eqref{1.7}), the embedding $\L^{2(\delta+1)}(\1)\subset\L^{\delta+1}(\1)$, interpolation and Young's inequalities, %(with $p=2,q=2$),
		we estimate the term $\big\|\A^{\sigma}\int_{0}^{t}R(s)c(u(s))\d s \big\|_{\L^2}$ as 
		\begin{align*}
			&\bigg\|\A^{\sigma}\int_{0}^{t}R(s)c(u(s))\d s \bigg\|_{\L^2} \nonumber\\&\leq C\left((1+\gamma)\int_0^ts^{-(\sigma+\frac{1}{4})}\|u(s)\|_{\L^{\delta+1}}^{\delta+1}\d s+\gamma\int_0^ts^{-\sigma}\|u(s)\|_{\L^2}\d s+\int_0^ts^{-(\sigma+\frac{1}{4})}\|u(s)\|_{\L^{2\delta+1}}^{2\delta+1}\right)\\&\leq  C\bigg( \int_{0}^{t}s^{-(\sigma+\frac{1}{4})}\|u(s)\|_{\L^{2(\delta+1)}}^{\delta+1}\d s+t^{-2\sigma+\frac{1}{2}}+t\sup_{s\in[0,t]}\|u(s)\|_{\L^2}^2\\&\quad+\int_{0}^{t}s^{-(\sigma+\frac{1}{4})}\|u(s)\|_{\L^2}^\frac{1}{\delta}\|u(s)\|_{\L^{2(\delta+1)}}^{\frac{(\delta+1)(2\delta-1)}{\delta}}\d s\bigg)
			\\&\leq C\bigg( t^{-2\sigma+\frac{1}{2}}+\int_{0}^{t}\|u(s)\|_{\L^{2(\delta+1)}}^{\L^{2(\delta+1)}}\d s+(t+t^{-(\sigma+\frac{1}{4})2\delta+1})\sup_{s\in[0,t]}\|u(s)\|_{\L^2}^2\bigg).
		\end{align*}\iffalse 
		Using the semigroup property and Young's inequality, %(with $p=2,q=2$), 
		we estimate the term $\big\|\A^{\sigma}\int_{0}^{t}R(s)u(s)\d s \big\|_{\L^2}$ as 
		\begin{align*}
			\bigg\|\A^{\sigma}\int_{0}^{t}R(s)u(s)\d s \bigg\|_{\L^2} & \leq c\bigg(t^{-2\sigma+\frac{1}{2}}+t\sup_{s\in[0,t]}\|u(s)\|_{\L^2}^2\bigg). 
		\end{align*}Again using semigroup property, Interpolation inequality %(with $r=2\delta+1,p=2,q=2(\delta+1)$ gives $\alpha =\frac{1}{\delta(2\delta+1)}$), 
		and Young's inequality, %(with $p=2\delta,q = \frac{2\delta}{2(\delta-1)}$), 
		we estimate the term $\big\|\A^{\sigma}\int_{0}^{t}R(s)u^{2\delta+1}(s)\d s \big\|_{\L^2}$ as 
		\begin{align*}
			\bigg\|\A^{\sigma}\int_{0}^{t}R(s)u^{2\delta+1}(s)\d s \bigg\|_{\L^2} &\leq c \int_{0}^{t}s^{-(\sigma+\frac{1}{4})}\|u(s)\|_{\L^{2\delta+1}}^{2\delta+1}\d s \\&\leq  c\int_{0}^{t}s^{-(\sigma+\frac{1}{4})}\|u(s)\|_{\L^2}^\frac{1}{\delta}\|u(s)\|_{\L^{2(\delta+1)}}^{\frac{(\delta+1)(2\delta-1)}{\delta}}\d s \\& \leq  c\bigg( t^{-(\sigma+\frac{1}{4})2\delta+1}\sup_{s\in[0,t]}\|u(s)\|_{\L^2}^2 +\int_{0}^{t}\|u(s)\|_{\L^{2(\delta+1)}}^{2(\delta+1)}\d s\bigg).
		\end{align*} \fi
		Let us consider the final term $\big\|\A^{\sigma}\int_{0}^{t}R(s)z_{\kappa}(s)\d s\big\|_{\L^2}$. We estimate using semigroup property (see \eqref{1.7}) and Young's inequality as %(with $p=2,q=2$), 
		
		\begin{align*}
			\bigg\|\A^{\sigma}\int_{0}^{t}R(s)z_{\kappa}(s)\d s\bigg\|_{\L^2}
			& \leq C \bigg[\int_{0}^{t}\{s^{-2\sigma}+\|z_{\kappa}(s)\|_{\L^2}^2\}\d s\bigg] \\& \leq  C \bigg(t^{-2\sigma+1}+ t\sup_{s\in[0,t]}\|z_{\kappa}(s)\|_{\L^2}^2\bigg).
		\end{align*}In a similar fashion, we estimate the terms of \eqref{3.12} as
		\begin{align*}
			&	\|\A^{\sigma} R(1)v(t)\|_{\L^2} \leq K_1 \|v(t)\|_{\L^2},
			\\ 
			&\alpha\bigg\|\A^{\sigma}\int_{t}^{t+1}R(t+1-s)\B(v(s)+z_{\kappa}(s))\d s\bigg\|_{\L^2}\nonumber\\&\quad \leq C \bigg(\sup_{s\in[0,1]}\|(v+z_{\kappa})(t+s)\|_{\L^2}^2+\int_{t}^{t+1}\|(v+z_{\kappa})(s)\|_{\L^{2(\delta+1)}}^{2(\delta+1)}\d s\bigg), 
			\\ &\beta\bigg\|\A^{\sigma}\int_{t}^{t+1}R(t+1-s)\c(v(s)+z_{\kappa}(s))\d s\bigg\|_{\L^2} \nonumber\\&\quad\leq C\bigg(1+\int_{t}^{t+1}\|(v+z_{\kappa})(s)\|_{\L^{2(\delta+1)}}^{2(\delta+1)}\d s+\sup_{s\in[0,1]}\|(v+z_{\kappa})(t+s)\|_{\L^2}^2\bigg),\\
			&	\kappa\bigg\|\A^{\sigma}\int_{t}^{t+1}R(t+1-s)z_{\kappa}(s)\d s\bigg\|_{\L^2} \leq C\bigg( 1+\sup_{s\in[0,1]}\|z_{\kappa}(t+s)\|_{\L^2}^2\bigg),
		\end{align*} and the right hand sides of the above estimates are  finite provided $\sigma <\frac{2-\delta}{4\delta}$ and $\delta\in[1,2)$. 
		Using the above estimates in \eqref{3.12}, we obtain
		\begin{align}\label{3.13}\nonumber
			\|\A^{\sigma}v(t+1)\|_{\L^2} &\leq  K_1 \|v(t)\|_{\L^2}+C\sup_{s\in[0,1]}\|v(t+s)\|_{\L^2}^2+K_3\sup_{s\in[0,1]}\|z_{\kappa}(t+s)\|_{\L^2}^2\\&\quad+C\int_{t}^{t+1}\|(v+z_{\kappa})(s)\|_{\L^{2(\delta+1)}}^{2(\delta+1)}\d s+C,
		\end{align}
		for $\sigma <\frac{2-\delta}{4\delta}$ and $\delta\in[1,2)$. 
		Integrating \eqref{3.9} from $t$ to $t+s$ and taking  supremum over $s\in[0,1]$, we find
		\begin{align*}
			\sup_{s\in[0,1]}\|v(t+s)\|_{\L^2}^2 &\leq \|v(t)\|_{\L^2}^2+ C(\beta,\gamma,\kappa)\int_{t}^{t+1}\|z_{\kappa}(r)\|_{\L^2}^2\d r\\&\quad+C(\nu,\alpha,\beta,\delta)\int_{t}^{t+1}\|z_{\kappa}(r)\|_{\L^{2(\delta+1)}}^{2(\delta+1)}\d r+C(\beta,\gamma,\delta).
		\end{align*}Again integrating \eqref{3.9} from $t$ to $t+1$, we obtain 
		\begin{align*}
			\int_{t}^{t+1}\|(v+z_{\kappa})(r)\|_{\L^{2(\delta+1)}}^{2(\delta+1)}\d r &\leq  \frac{4}{\beta}\bigg\{\|v(t)\|_{\L^2}^2+ C(\beta,\gamma,\kappa)\int_{t}^{t+1}\|z_{\kappa}(r)\|_{\L^2}^2\d r\\&\quad+C(\nu,\alpha,\beta,\delta)\int_{t}^{t+1}\|z_{\kappa}(r)\|_{\L^{2(\delta+1)}}^{2(\delta+1)}\d r+C(\beta,\gamma,\delta)\bigg\}. 
		\end{align*}Using the above two estimates in \eqref{3.13}, we get
		\begin{align}\label{3.14}\nonumber
			\|\A^{\sigma}v(t+1)\|_{\L^2} &\leq  K_1 \|v(t)\|_{\L^2}+K_2\|v(t)\|_{\L^2}^2+K_3\sup_{s\in[0,1]}\|z_{\kappa}(t+s)\|_{\L^2}^2\nonumber\\&\quad+K_4\int_{t}^{t+1}\|z_{\kappa}(r)\|_{\L^2}^2\d r+K_5\int_{t}^{t+1}\|z_{\kappa}(r)\|_{\L^{2(\delta+1)}}^{2(\delta+1)}\d r+K_6,
		\end{align}where $K_i>0, \text{ for } i\in\{1,\ldots,6\}$ are constants. Using the fact $\P \big\{K_6\geq \frac{K}{6}\big\}=0$, since $K$ has been chosen sufficiently large so the left hand can't exceed to the right hand value.	Finally, we have 
		\begin{align*}
			&	\frac{1}{T}\int_{0}^{T}\P\left\{\|\A^{\sigma}v(t+1)\|_{\L^2}\geq K\right\}\d t \\&\leq  \frac{1}{T}\int_{0}^{T}\P\left\{K_1 \|v(t)\|_{\L^2}\geq \frac{K}{6}\right\}\d t+\frac{1}{T}\int_{0}^{T}\P\left\{K_2 \|v(t)\|_{\L^2}^2\geq \frac{K}{6}\right\}\d t \\&\quad+\frac{1}{T}\int_{0}^{T}\P\left\{K_3\sup_{s\in[0,1]}\|z_{\kappa}(t+s)\|_{\L^2}^2\geq \frac{K}{6}\right\}\d t+\frac{1}{T}\int_{0}^{T}\P\left\{K_4\int_{t}^{t+1}\|z_{\kappa}(r)\|_{\L^2}^2\d r\geq \frac{K}{6}\right\}\d t\\&\quad+\frac{1}{T}\int_{0}^{T}\P\left\{K_5\int_{t}^{t+1}\|z_{\kappa}(r)\|_{\L^{2(\delta+1)}}^{2(\delta+1)}\d r\geq \frac{K}{6}\right\}\d t.
		\end{align*}
		The first two terms in the right hand side of above inequality can be made arbitrarily small uniform in time $T$ with the help of Lemma \ref{lemEx1} for sufficiently large $K$. For the remaining terms of the above inequality we use Markov's inequality and the fact that
		\begin{align*}
			\sup_{t>0}\E\left\{ \|z_{\kappa}(t)\|_{\L^2}^2+\|z_{\kappa}(t)\|_{\L^{2(\delta+1)}}^{2(\delta+1)}\right\}<\infty,
		\end{align*} and hence \eqref{3.11} follows. 
	\end{proof}
	
	\iffalse 
	To prove the LDP for SGBH equation we use Wu's criterion for that we need two properties that the semigroup $\mathrm{P}_t,\; t>0$, associated with the solution of SGBH equation is strong Feller and irreducible. By these two properties of semigroup associated with the solution we can also show that existence of unique invariant with the help of Doob's theorem.
	\begin{theorem}
		Assume that the Markov semigroup $\mathrm{P}_{t}$ is irreducible and strong Feller. Then there exists at most one invariant measure $\nu$, which is ergodic and equivalent to each transition probability $\mathrm{P}(t,x,\cdot)$.
	\end{theorem}  
	\fi 
	\section{Irreducibility and Strong Feller}\label{sec4}\setcounter{equation}{0}
	In this section, we discuss two properties of the Markov semigroup $\mathrm{P}_t, t\geq 0$ associated with the solution of the SGBH equation \eqref{2.2}, namely irreducibility and strong Feller.  For any Borel subset  $\mathrm{E}$ of $\L^2(\1)$, a Markov semigroup $\mathrm{P}_t,\ t\geq 0$ is
	\begin{itemize}
		\item   \emph{irreducible } if $\mathrm{P}(t,x,B) >0, \text{ for all }\ t>0, \ x \in \mathrm{E}$ and any non-empty open subset $B\subset \mathrm{E}$,
		\item \emph{strong Feller} if $\mathrm{P}_{t}$ can be extended to the space $\mathrm{B}_b(\mathrm{E}) \text{ for any } t>0$, that is, $\mathrm{P}_{t}\varphi$ is continuous and bounded in $\mathrm{E}$ for all Borel bounded functions $\varphi$ in $\mathrm{E}$.
	\end{itemize}
	The above properties are essentially related to the uniqueness of  invariant measures. 
	\subsection{Irreducibility} Let us first show that the Markov semigroup $\mathrm{P}_t,\ t\geq 0$ is irreducible by using the ideas in \cite{GDJZ,EPJZ,RWJXLX}, etc. 
	\begin{proposition}\label{prop4.1}
		The transition semigroup $\mathrm{P}_t,\; t\geq 0$ on the space $\mathrm{B}_b(\L^2(\1))$ corresponding to the solution of SGBH equation \eqref{2.2} is irreducible for $\delta \in [1,2]$. 
	\end{proposition}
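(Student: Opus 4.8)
The plan is to establish the equivalent ball statement: for arbitrary $x,a\in\L^2(\1)$, $r>0$ and $T>0$,
$$\P\{\|u(T,x)-a\|_{\L^2}<r\}>0,$$
where $u(\cdot,x)$ is the solution of \eqref{2.2} with $u(0)=x$. This suffices because any non-empty open subset of $\L^2(\1)$ contains such a ball. Throughout I would work with the decomposition $u=v+z$ of Section \ref{sec2}, where $z(t)=\int_0^tR(t-s)\G\d\W(s)$ is the centred Gaussian stochastic convolution and $v$ solves the random equation \eqref{2.4}. The argument rests on three ingredients: approximate controllability of the deterministic skeleton by an \emph{admissible} forcing, a Girsanov change of measure turning that forcing into an extra drift of \eqref{2.2}, and the continuous dependence of $v$ on the path $z$.

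First I would treat the controlled deterministic problem
\begin{align*}
\bar y'(t)=-\nu\A\bar y(t)-\alpha\B(\bar y(t))+\beta\c(\bar y(t))+\G h(t),\qquad \bar y(0)=x,
\end{align*}
with control $h\in\L^2(0,T;\L^2(\1))$, and show approximate controllability: for every $\epsilon>0$ there exists $h$ with $\|\bar y(T)-a\|_{\L^2}<\epsilon$. A convenient route is to let the control act trivially on a short initial interval so that, by the smoothing of the analytic semigroup $R(\cdot)$, the state becomes smooth, then fix a reference path $\psi\in\C^1([T/2,T];\D(\A))$ joining this smoothed state to a smooth $a'$ with $\|a'-a\|_{\L^2}<\epsilon/2$, and read off the required forcing $\psi'+\nu\A\psi+\alpha\B(\psi)-\beta\c(\psi)$. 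Since this expression is smooth, it lies in $\D(\A^{\e/2})\subset\mathrm{Im}(\Q^{1/2})=\mathrm{Im}(\G)$ by assumption \eqref{1.3}, so an admissible control $h$ with $\G h$ equal to this forcing exists.

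Next I would transfer this controllability to positive probability. Shifting the cylindrical Wiener process $\W$ by $\int_0^\cdot h(s)\d s$ is a valid Cameron--Martin shift (deterministic $h$, so Novikov holds trivially), and by Girsanov's theorem the law of the solution $u^h$ of \eqref{2.2} with the added drift $\G h\,\d t$ is equivalent to the law of $u$; hence $\P\{\|u(T)-a\|_{\L^2}<r\}>0$ if and only if the same holds for $u^h$. Writing $u^h=v^h+z$, where $v^h$ solves the controlled random version of \eqref{2.4}, I would compare $v^h$ with the skeleton $\bar y$ (the $z\equiv0$ solution): subtracting the two equations and running the Gronwall argument from the uniqueness part (Step (7)) of the proof of Theorem \ref{thm2.4} yields a stability bound $\sup_{t\le T}\|v^h(t)-\bar y(t)\|_{\L^2}\le\omega\big(\|z\|_{\C([0,T];\L^2)}\big)$ with $\omega(s)\to0$ as $s\to0$. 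Since $z$ is centred Gaussian, $0$ belongs to its support, so $\P\{\|z\|_{\C([0,T];\L^2)}<\rho\}>0$ for every $\rho>0$; choosing $\rho$ small enough that $\omega(\rho)+\|\bar y(T)-a\|_{\L^2}<r$ gives $\P\{\|u^h(T)-a\|_{\L^2}<r\}>0$, and irreducibility follows.

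The hard part will be the stability estimate $z\mapsto v^h$, and this is exactly where the restriction $\delta\in[1,2]$ enters. Because the convective term $\B(u)=u^{\delta}\partial_{\xi}u$ is quasilinear, controlling the difference of two solutions forces the same exponential factor as in the uniqueness proof, whose integrand $\|v+z\|_{\L^{2(\delta+1)}}^{4\delta(\delta+1)/(\delta+2)}$ is integrable on $[0,T]$ only for $\delta\in[1,2]$ (cf.\ \eqref{U10}); for $\delta>2$ this Gronwall bound no longer closes and the continuous-dependence argument breaks down. A secondary, purely technical point is the admissibility of the reference forcing, i.e.\ that it lies in $\mathrm{Im}(\Q^{1/2})$, which is precisely guaranteed by the non-degeneracy assumption \eqref{1.3}.
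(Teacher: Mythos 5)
Your strategy is sound and it reaches the conclusion by a genuinely different, though closely related, route. The paper's proof first (Step 1) drives the deterministic system \eqref{4.1} \emph{exactly} to a target $b\in\H_0^1(\1)$ using an arbitrary control $\u\in\L^2(0,T;\L^2(\1))$, not required to be of the form $\G h$, and then (Step 2) compares the stochastic solution with the controlled trajectory through a Gronwall estimate whose right-hand side is governed by $\sup_{t\in[0,T]}\|z(t)-\mathfrak{z}(t)\|_{\L^{2(\delta+1)}}$, where $\mathfrak{z}$ is the deterministic convolution of the control; positivity is then extracted from the support theorem for the Gaussian convolution $z$, which is where the non-degeneracy \eqref{1.3} enters. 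You instead inject the non-degeneracy into the controllability step (admissible controls $\G h$, via $\D(\A^{\e/2})\subset\mathrm{Im}(\Q^{1/2})=\mathrm{Im}(\G)$), remove the control by a Cameron--Martin/Girsanov shift, and then only need the trivial fact that $0$ lies in the support of the centred Gaussian $z$. Since the Girsanov shift of the noise and the Cameron--Martin description of the support of $z$ are two faces of the same translation, the two arguments are equivalent in substance; yours is more modular (approximate controllability suffices and the small-ball step is immediate), while the paper's avoids constraining the control to $\mathrm{Im}(\G)$ and so needs less care in building the reference path. Two points in your sketch need tightening. First, the stability modulus cannot depend on $\|z\|_{\C([0,T];\L^2)}$ alone: the polynomial nonlinearities force the difference of the convolution terms to be measured in $\L^{2(\delta+1)}(\1)$ (compare \eqref{4.12}--\eqref{4.13}), so the small-ball event must be taken in $\C([0,T];\L^{2(\delta+1)}(\1))$ or in a Sobolev space embedding into it; this costs nothing, as $z$ is still a centred Gaussian there with $0$ in its support. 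Second, admissibility of the reference forcing requires $\psi'+\nu\A\psi+\alpha\B(\psi)-\beta\c(\psi)$ to lie in $\mathrm{Im}(\Q^{1/2})$ with $\Q^{-1/2}$ of it square-integrable in time, which is cleanest if the intermediate target and the path $\psi$ are taken in a finite-dimensional span of the eigenfunctions $e_k$ rather than merely smooth. Your identification of where $\delta\in[1,2]$ enters --- the integrability of the Gronwall exponent $\|v+z\|_{\L^{2(\delta+1)}}^{4\delta(\delta+1)/(\delta+2)}$ as in \eqref{U10} --- matches the paper exactly.
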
 
	\begin{proof}
		Let us prove the irreducibility of the transition semigroup $\mathrm{P}_t,\ t\geq 0$ in the following steps:
		\vskip 0.2 cm
		\noindent \textbf{Step 1:} \emph{Exact controllability result.} 
		Let us fix $T>0,\; a\in \L^2(\1)$ and $b\in\H_0^1(\1)$. We first show that there exist $\u \in \L^2(0,T;\L^2(\1))$ such that for the solution $x(t), t\in[0,T]$ of the control problem

		\begin{equation}\label{4.1}
			\left\{
			\begin{aligned} 
				\partial_t x(t,\xi)&= -\nu\A x(t,\xi)-\alpha \B(x(t,\xi))+\beta\c(x(t,\xi))+\u(t,\xi),\; t>0, \;\xi \in[0,1],
				\\ x(t,0) &= x(t,1)=0,\; t>0, \\
				x(0,\xi)&=a(\xi), \; \xi \in[0,1],
			\end{aligned}
			\right.
		\end{equation}
		one has \begin{align*}
			x(T,\xi) = b(\xi), \; \xi\in[0,1].
		\end{align*}
		Assume further that $a\in\H_0^1(\mathcal{O})$. Then there exists $\v\in\L^2(0,T;\L^2(\1))$, such that for the solution $z(t), \; t\in[0,T]$, of the linear problem associated to \eqref{4.1} (see Proposition 14.4.3, \cite{GDJZ}), 
		\begin{equation}\label{4.2}
			\left\{
			\begin{aligned} 
				\partial_t z(t,\xi)&= -\nu\A z(t,\xi)+\v(t,\xi),\; t>0,\; \xi \in[0,1],		\\  z(t,0) &= z(t,1)=0,\; t>0, \\
				z(0,\xi)&=a(\xi), \; \xi \in[0,1],
			\end{aligned}
			\right.
		\end{equation}one has \begin{align*}
			z(T,\xi) = b(\xi), \; \xi\in[0,1],
		\end{align*} and one can easily verify that \eqref{4.2}, $z\in\C([0,T];\H_0^1(\1))$.  Since $a\in\H_0^1(\mathcal{O})$, it is immediate that $z\in\L^{\infty}(0,T;\H_0^1(\mathcal{O}))\cap\mathrm{L}^2(0,T;\D(\A))$. Now, we define a control 
		\begin{equation*}
			\v (t)=\left\{
			\begin{aligned}
				&0,  &&\text{ for } 0\leq t\leq t_0,\\
				&\frac{b-z(t_0)}{T-t_0}-\A z(t), &&\text{ for } t_0 < t\leq T,
			\end{aligned}
			\right.
		\end{equation*} 
		where $0<t_0<T$. Since $z\in \L^2(0,T;\D(\A)) $, we obtain $\v \in \L^2(0,T;\L^2(\1))$. 
		For the above control, the solution of the problem \eqref{4.2} is given by 
		\begin{equation*}
			z(t) =\left\{
			\begin{aligned}
				&e^{-\nu\A t}a,  &&\text{ for } 0\leq t\leq t_0,\\
				&\frac{t-t_0}{T-t_0}b+\frac{T-t}{T-t_0}z(t_0), &&\text{ for } t_0 < t\leq T,		
			\end{aligned}
			\right.
		\end{equation*}and it can be easily seen that $z(T)=b$. Thus  the linear problem \eqref{4.2} is exactly controllable. 
		%Such a control $v(\cdot)$ exists, we can define the control $v(\cdot)$ by 
		%\begin{align*}
		%	v(r) = \begin{cases}
		%		0, \; &0<r\leq t_0,\\
		%		\frac{R(-T+r)}{T-t_0}a(\xi), \; &t_0<r\leq T.
		%	\end{cases}
		%\end{align*}It is easy to check this control $v(\cdot)$ take the solution $z(\cdot)$ of \eqref{4.2} to $a(\xi)$ at time $t=T$, also one can check that $v\in\L^2(0,T;\L^2(\1))$.  
		Let us  now define a control $\u(\cdot)$ as 
		\begin{align*}
			\u = -\frac{\alpha}{\delta+1}\partial_{\xi} z^{\delta+1}-\beta(1+\gamma)z^{\delta+1}+\beta\gamma z+\beta z^{2\delta+1}+\v, \; t>0,\; \xi\in[0,1].
		\end{align*}
		By direct substitution, one can prove that $z(\cdot)$ is the solution of \eqref{4.1}.
		
		Now we  show that the control $\u(\cdot) \in \L^2(0,T;\L^2(\1))$. Taking the $\L^2$-norm of the control $\u$, we get 
		\begin{align}\label{4.3}
			\|\u\|^2_{\L^2} &\leq C\left( \frac{\alpha^2}{(\delta+1)^2}\|\partial_{\xi}z^{\delta+1}\|_{\L^2}^2+\beta^2(1+\gamma)^2\|z\|_{\L^{2(\delta+1)}}^{2(\delta+1)}+\beta^2\gamma^2\|z\|_{\L^2}^2+\beta^2\|z\|_{\L^{2(2\delta+1)}}^{2(2\delta+1)}+\|\v\|_{\L^2}^2\right). 
		\end{align}%Let us consider the term $\frac{\alpha^2}{(\delta+1)^2}\|\partial_{\xi}z^{\delta+1}\|_{\L^2}^2$, using H\"older's inequality (with $p=\infty,\; q=1$), we obtain 
		Using H\"older's inequality, we estimate the term $\frac{\alpha^2}{(\delta+1)^2}\|\partial_{\xi}z^{\delta+1}\|_{\L^2}^2$ as
		\begin{align*}
			\frac{\alpha^2}{(\delta+1)^2}\|\partial_{\xi}z^{\delta+1}\|_{\L^2}^2 = \alpha^2(z^{\delta}\partial_{\xi} z,z^{\delta}\partial_{\xi} z) \leq\alpha^2\|z\|_{\L^{\infty}}^{2\delta}\|\partial_{\xi} z\|_{\L^2}^2.
		\end{align*}Substituting it in \eqref{4.3} and integrating the resultant from $0$ to $T$, we obtain 
		\begin{align*}
			\int_{0}^{T}	\|\u(s)\|^2_{\L^2}\d s  &\leq  C\bigg(
			\alpha^2\sup_{s\in[0,T]}\|z(s)\|_{\L^{\infty}}^{2\delta}\int_{0}^{T}\|\partial_{\xi} z(s)\|_{\L^2}^2\d s+\beta^2(1+\gamma)^2\int_{0}^{T}\|z(s)\|_{\L^{2(\delta+1)}}^{2(\delta+1)}\d s\\&\quad+\beta^2\gamma^2\int_{0}^{T}\|z(s)\|_{\L^2}^2\d s+\beta^2\int_{0}^{T}\|z(s)\|_{\L^{2(2\delta+1)}}^{2(2\delta+1)}\d s+\int_{0}^{T}\|\v(s)\|_{\L^2}^2\d s\bigg) \\& <\infty,
		\end{align*} so that  $\u\in\L^2(0,T;\L^2(\1))$. Thus  the exact controllability of the system \eqref{4.1} follows. 
		\iffalse 
		We can also prove the exact controllability of the system \eqref{4.1} as:
		we can to find a control $\u(\cdot)$ such that $x(T)=b$ for the initial data $a \in \H_0^1(\1)$. Taking the inner product of first equation of system \eqref{4.2} with $z(\cdot)$, using Cauchy-Schwarz, Young's and Poincar\'e inequalities and integrating from $0$ to $t$ to obtain
		\begin{align*}
			\frac{1}{2}\frac{\d}{\d t}\|z(t)\|_{\L^2}^2+\nu\|\partial_{\xi}z(t)\|_{\L^2}^2&= (\v(t), z(t))\\
			\frac{1}{2}\frac{\d}{\d t}\|z(t)\|_{\L^2}^2+\nu\|\partial_{\xi}z(t)\|_{\L^2}^2 &\leq \|\v(t)\|_{\L^2}\|z(t)\|_{\L^2} \leq \frac{\nu}{2}\|\partial_{\xi}z(t)\|_{\L^2}^2+\frac{1}{2\nu\pi^2}\|\v(t)\|_{\L^2}^2
			\\
			\frac{\d}{\d t}\|z(t)\|_{\L^2}^2+\nu\|\partial_{\xi}z(t)\|_{\L^2}^2 &\leq \frac{1}{\nu\pi^2}\|\v(t)\|_{\L^2}^2 \\
			\|z(t)\|_{\L^2}^2+\nu\int_{0}^{t}\|\partial_{\xi}z(s)\|_{\L^2}^2\d s &\leq \|a\|_{\L^2}^2+\frac{1}{\nu\pi^2}\int_{0}^{t}\|\v(s)\|_{\L^2}^2\d s,
		\end{align*}this gives $z \in \L^{\infty}(0,T;\L^2(\1))\cap \L^2(0,T;\H_0^1(\1))$. Again taking the inner product of first equation of system \eqref{4.2} with $\A z(\cdot)$, and calculating in the similar fashion to deduce that 
		\begin{align*}
			\|\partial_{\xi}z(t)\|_{\L^2}^2+\nu \int_{0}^{t}\|\A z(s)\|_{\L^2}^2\d s\leq \|\partial_{\xi}a\|_{\L^2}^2+\frac{1}{\nu}\int_{0}^{t}\|\v(s)\|_{\L^2}^2\d s,
		\end{align*}which gives us $z\in \L^{\infty}(0,T;\H_0^1(\1))\cap \L^2(0,T;\D(\A))$, since $a \in \H_0^1(\1)$.\\
		\fi 
		Taking the inner product with $x(\cdot)$ to the first equation in \eqref{4.1}, we find 
		\begin{align*}
			&\frac{1}{2}\frac{\d}{\d t}\|x(t)\|_{\L^2}^2+\nu\|\partial_{\xi}x(t)\|_{\L^2}^2+\beta\gamma\|x(t)\|_{\L^2}^2+\beta\|x(t)\|_{\L^{2(\delta+1)}}^{2(\delta+1)}\nonumber\\&\quad=\beta(1+\gamma)(x^{\delta+1}(t),x(t))+(x(t),\v(t)).
		\end{align*}
		It follows that, if $\v=0$, the for a.e. $t\in[0,T]$, we have 
		\begin{align*}
			\frac{1}{2}\frac{\d}{\d t}\|x(t)\|_{\L^2}^2+\nu\|\partial_{\xi}x(t)\|_{\L^2}^2+\frac{\beta}{2}\|x(t)\|_{\L^{2(\delta+1)}}^{2(\delta+1)}\leq \frac{\beta}{2}(1+\gamma^2)\|x(t)\|_{\L^2}^2.
		\end{align*}
		From the above estimate it is immediate that for a.e. $t\in[0,T]$, we get $x(t)\in\H_0^1(\mathcal{O})$. Thus, it is enough to define $\v(s)=0$ in $[0,\tilde{t}]$, where $\tilde{t}\in(0,T)$ is a moment such that $x(\tilde{t})\in\H_0^1(\mathcal{O})$ and for the remaining part of the interval $[\tilde{t},T],$ we use the first part of the proof. 
		
		\vskip 0.2 cm
		\noindent \textbf{Step 2:} \emph{Irreducibility.} 	In order to prove our result, we need to estimate the $\L^2$-distance between the solution $u(\cdot)$ to equation  \eqref{2.2} with $x(0)=a\in\L^2(\1)$, and the function $x(\cdot)$.	From Theorems \ref{thm2.6} and \ref{thm2.7}, for any $a\in\L^2(\1)$,  we infer that $u(t,a)\in\H_0^1(\1)$, for a.e. $t\in[0,T]$, $\mathbb{P}$-a.s. Since $u(\cdot)$ is a Markov process in $\L^2(\1)$, for any $b\in\L^2(\1)$, $T>0$, $\eta>0$, (cf. Theorem 2.3, \cite{RWJXLX})
		\begin{align}
			\mathbb{P}\left\{\|u(T,a)-b\|_{\L^2}<\eta \right\}&=\int_{\H_0^1}\mathbb{P}\left\{\|u(T,a)-b\|_{\L^2}<\eta\big|u(t,a)=v\right\}\mathbb{P}\left\{u(t,a)\in\d v\right\}\nonumber\\&=\int_{\H_0^1}\mathbb{P}\left\{\|u(T-t,v)-b\|_{\L^2}<\eta\right\}\mathbb{P}\left\{u(t,a)\in\d v\right\},
		\end{align}
		for a.e. $t\in[0,T]$. In order to prove that $	\mathbb{P}\left\{\|u(T,a)-b\|_{\L^2}<\eta\right\}>0$, it is sufficient to show that $	\mathbb{P}\left\{\|u(T,a)-b\|_{\L^2}<\eta\right\}>0,$ for any $T>0$ and $a\in\H_0^1(\1)$.  
		
		Let us rewrite the control problem \eqref{4.1} as 
		\begin{equation}\label{4.4}
			\left\{
			\begin{aligned}
				\d \mathfrak{z}(t)+\nu\A \mathfrak{z}(t)\d t&= \u\d t,\; \mathfrak{z}(0)=0,
				\\ \d y(t)+\nu\A y(t)\d t&= \{-\alpha \B(y(t)+\mathfrak{z}(t))+\beta \c(y(t)+\mathfrak{z}(t))\}\d t,\; y(0)=a,		
			\end{aligned}
			\right.
		\end{equation}and the stochastic problem \eqref{2.2} as 
		\begin{equation}\label{4.5}
			\left\{
			\begin{aligned}
				\d z(t)+\nu\A z(t)\d t &=\G\d\W, \; z(0)=0,
				\\ \d v(t)+\nu\A v(t)\d t& =\{-\alpha\B(v(t)+z(t))+\beta\c(v(t)+z(t))\}\d t, \; v(0)=a,
			\end{aligned}
			\right.
		\end{equation}
		where we have  set $x(t)=y(t)+\mathfrak{z}(t)$ and $u(t)=v(t)+z(t)$. Now we subtract second equation of the system \eqref{4.4} from second equation of system \eqref{4.5} to obtain
		\begin{align*}
			\frac{\d}{\d t } (v(t)-y(t))+\nu\A(v(t)-y(t))&= -\alpha (\B(v(t)+z(t)))-\B(y(t)+\mathfrak{z}(t))\\&\quad+\beta (\c(v(t)+z(t))-\c(y(t)+\mathfrak{z}(t))),
		\end{align*} for a.e., $0\leq t \leq T$. Taking the inner product with $v(t)-y(t)$, we get
		\begin{align}\label{4.6}\nonumber
			\|v(t)-y(t)\|_{\L^2}^2&+2\nu\int_{0}^{t}\|\partial_{\xi}(v(s)-y(s))\|_{\L^2}^2\d s \nonumber\\&=-2\alpha \int_{0}^{t}(\B(v(s)+z(s))-\B(y(s)+\mathfrak{z}(s)),v(s)-y(s))\d s\nonumber\\&\quad+2\beta\int_{0}^{t}(\c(v(s)+z(s))-\c(y(s)+\mathfrak{z}(s)),v(s)-y(s))\d s.
		\end{align} Using integration by parts, Taylor's formula, Gagliardo-Nirenberg's inequality, interpolation inequality, Young's and H\"older's inequalities, we estimate the term $-\alpha \int_{0}^{t}(\B(v(s)+z(s))-\B(y(s)+\mathfrak{z}(s)),v(s)-y(s))\d s$ as	
		\iffalse Take first term of the right hand side of \eqref{4.6}, using integration by parts, Taylor's formula and  H\"older's inequality, (with $2(\delta+1),\; \frac{2(\delta+1)}{\delta},\; 2$),for $\theta\in[0,1]$, we obtain \fi 
		\begin{align}\label{4.7}\nonumber
				-&\alpha \int_{0}^{t}(\B(v(s)+z(s))-\B(y(s)+\mathfrak{z}(s)),v(s)-y(s))\d s \nonumber\\& \leq 
			2^{\delta-1}\alpha  \int_{0}^{t}\|v(s)+z(s)-y(s)-\mathfrak{z}(s)\|_{\L^{2(\delta+1)}}(\|v(s)+z(s)\|_{\L^{2(\delta+1)}}^\delta+\|y(s)+\mathfrak{z}(s)\|_{\L^{2(\delta+1)}}^\delta)\nonumber\\&\quad\times\|\partial_{\xi}(v(s)-y(s))\|_{\L^2}\d s 
			\nonumber\\& \leq 
			2^{\delta-1}\alpha  \int_{0}^{t}\|v(s)-y(s)\|_{\L^{2(\delta+1)}}(\|v(s)+z(s)\|_{\L^{2(\delta+1)}}^\delta+\|y(s)+\mathfrak{z}(s)\|_{\L^{2(\delta+1)}}^\delta)\nonumber\\&\qquad\times \|\partial_{\xi}(v(s)-y(s))\|_{\L^2}\d s+2^{\delta-1}\alpha  \int_{0}^{t}\|z(s)-\mathfrak{z}(s)\|_{\L^{2(\delta+1)}}(\|v(s)+z(s)\|_{\L^{2(\delta+1)}}^\delta\nonumber\\&\quad+\|y(s)+\mathfrak{z}(s)\|_{\L^{2(\delta+1)}}^\delta)\|\partial_{\xi}(v(s)-y(s))\|_{\L^2}\d s \nonumber\\& 
			\leq 2^{\delta-1}\alpha\int_{0}^{t}\|\partial_{\xi}(v(s)-y(s))\|_{\L^2}^{\frac{3\delta+2}{2(\delta+1)}} \|v(s)-y(s)\|_{\L^2}^{\frac{\delta+2}{2(\delta+1)}} (\|v(s)+z(s)\|_{\L^{2(\delta+1)}}^\delta\nonumber\\&\quad+\|y(s)+\mathfrak{z}(s)\|_{\L^{2(\delta+1)}}^\delta)\d s 
			+
			\frac{\nu}{3}\int_{0}^{t}\|\partial_{\xi}(v(s)-y(s))\|_{\L^2}^2\d s+\frac{2^{2(\delta-1)}3\nu\alpha^2}{4}\nonumber	\\& \qquad\times\int_{0}^{t}\|z(s)-\mathfrak{z}(s)\|_{\L^{2(\delta+1)}}^2(\|v(s)+z(s)\|_{\L^{2(\delta+1)}}^{2\delta}+\|y(s)+\mathfrak{z}(s)\|_{\L^{2(\delta+1)}}^{2\delta})\d s 
			\nonumber\\&\leq \frac{2\nu}{3}\int_{0}^{t}\|\partial_{\xi}(v(s)-y(s))\|_{\L^2}^2\d s+\frac{\delta+2}{4(\delta+1)}\bigg(\frac{3(3\delta+2)}{4\nu(\delta+1)}\bigg)^{\frac{\delta+2}{3\delta+2}}\int_{0}^{t}\big(\|v(s)+z(s)\|_{\L^{2(\delta+1)}}^{\frac{4\delta(\delta+1)}{\delta+2}}\nonumber\\&\quad+\|y(s)+\mathfrak{z}(s)\|_{\L^{2(\delta+1)}}^{\frac{4\delta(\delta+1)}{\delta+2}}\big)\|v(s)-y(s)\|_{\L^2}^2\d s+\frac{2^{2(\delta-1)}3\nu\alpha^2}{4}\int_{0}^{t}(\|v(s)+z(s)\|_{\L^{2(\delta+1)}}^{2\delta}\nonumber\\&\quad+\|y(s)+\mathfrak{z}(s)\|_{\L^{2(\delta+1)}}^{2\delta})\|z(s)-\mathfrak{z}(s)\|_{\L^{2(\delta+1)}}^2\d s .
		\end{align}		Let us take the second term in the right hand side of \eqref{4.6} and rewrite it as 
		\begin{align}\label{4.8}\nonumber
			&	\beta\int_{0}^{t}(\c(v(s)+z(s))-\c(y(s)+\mathfrak{z}(s)),v(s)-y(s))\d s\nonumber\\& = \beta\int_{0}^{t}((1+\gamma)(v(s)+z(s))^{\delta+1}-\gamma(v(s)+z(s))-(v(s)+z(s))^{2\delta+1}\nonumber\\&\quad-\{(1+\gamma)(y(s)+\mathfrak{z}(s))^{\delta+1}-\gamma(y(s)+\mathfrak{z}(s))-(y(s)+\mathfrak{z}(s))^{2\delta+1}\},v(s)-y(s))\d s. 
		\end{align}
		Using Taylor's formula, Young's and H\"older's inequalities, we estimate the term $\beta(1+\gamma)\int_{0}^{t}((v(s)+z(s))^{\delta+1}-(y(s)+\mathfrak{z}(s))^{\delta+1},v(s)-y(s))\d s$ as
		\begin{align}\label{4.9}\nonumber
			&\beta(1+\gamma)\int_{0}^{t}((v(s)+z(s))^{\delta+1}-(y(s)+\mathfrak{z}(s))^{\delta+1},v(s)-y(s))\d s\nonumber\\&  = \beta(1+\gamma)\bigg(\int_{0}^{t}((v(s)+z(s))^{\delta+1}-(y(s)+\mathfrak{z}(s))^{\delta+1},v(s)+z(s)-(y(s)+\mathfrak{z}(s))\d s\nonumber\\&\quad-\int_{0}^{t}((v(s)+z(s))^{\delta+1}-(y(s)+\mathfrak{z}(s))^{\delta+1}, z(s)-\mathfrak{z}(s))\d s\bigg)\nonumber\\& \leq\beta(1+\gamma)(\delta+1)2^{\delta-1}\int_{0}^{t}\big(\||v(s)+z(s)|^{\delta}(v(s)+z(s)-y(s)-\mathfrak{z}(s))\|_{\L^2}\nonumber\\&\quad+\||y(s)+\mathfrak{z}(s)|^{\delta}(v(s)+z(s)-y(s)-\mathfrak{z}(s))\|_{\L^2}\big))\big(\|v(s)+z(s)-y(s)-\mathfrak{z}(s)\|_{\L^2}\nonumber\\&\quad+\|z(s)-\mathfrak{z}(s)\|_{\L^2}\big)\d s
			\nonumber \\&\leq\frac{\beta}{4} \int_{0}^{t}\big(
			\||v(s)+z(s)|^{\delta}(v(s)+z(s)-y(s)-\mathfrak{z}(s))\|_{\L^2}^2\nonumber\\&\quad+\||y(s)+\mathfrak{z}(s)|^{\delta}(v(s)+z(s)-y(s)-\mathfrak{z}(s))\|_{\L^2}^2\big)\d s+2^{2\delta}\beta(1+\gamma)^2(\delta+1)^2\nonumber\\&\qquad\times\int_{0}^{t}\big(\|v(s)+z(s)-y(s)-\mathfrak{z}(s))\|_{\L^2}^2+\|z(s)-\mathfrak{z}(s)\|_{\L^2}^2\big)\d s.
		\end{align}		Let us consider the second term of the right hand side of \eqref{4.8}. Using Cauchy-Schwarz and Young's inequalities, we obtain  
		\begin{align}\label{4.10}\nonumber
			\bigg|&-\beta\gamma\int_{0}^{t}(v(s)+z(s)-y(s)-\mathfrak{z}(s),v(s)-y(s))\d s\bigg|    \nonumber\\&\leq \beta\gamma\int_{0}^{t}(\|v(s)-y(s)\|_{\L^2}^2+\|z(s)-\mathfrak{z}(s)\|_{\L^2}\|v(s)-y(s)\|_{\L^2})\d s \nonumber\\& \leq 
			\frac{3\beta\gamma}{2}\int_{0}^{t}\|v(s)-y(s)\|_{\L^2}^2\d s +\frac{2}{\beta\gamma} \int_{0}^{t}\|z(s)-\mathfrak{z}(s)\|_{\L^2}^2\d s.
		\end{align}Using Taylor's formula, \eqref{U6}, H\"older's and Young's   inequalities, we estimate the term $-\beta\int_{0}^{t}((v(s)+z(s))^{2\delta+1}-(y(s)+\mathfrak{z}(s))^{2\delta+1},v(s)-y(s))\d s$ as
		\begin{align}\label{4.11}\nonumber
			-&\beta\int_{0}^{t}((v(s)+z(s))^{2\delta+1}-(y(s)+\mathfrak{z}(s))^{2\delta+1},v(s)+z(s)-y(s)-\mathfrak{z}(s)-z(s)+\mathfrak{z}(s))\d s
			\nonumber\\& \leq -\frac{\beta}{2}\int_{0}^{t}\||v(s)+z(s)|^{\delta}(v(s)+z(s)-y(s)-\mathfrak{z}(s))\|_{\L^2}^2\d s\nonumber\\&\quad-\frac{\beta}{2}\int_{0}^{t}\||y(s)+\mathfrak{z}(s)|^{\delta}(v(s)+z(s)-y(s)-\mathfrak{z}(s))\|_{\L^2}^2\d s\nonumber\\&\quad+2^{2\delta-1}\beta(2\delta+1)\int_{0}^{t}\|v(s)+z(s)-y(s)-\mathfrak{z}(s)\|_{\L^{2(\delta+1)}}\big(\|v(s)+z(s)\|_{\L^{2(\delta+1)}}^{2\delta}\nonumber\\&\qquad+|y(s)+\mathfrak{z}(s)\|_{\L^{2(\delta+1)}}^{2\delta}\big)\|z(s)-\mathfrak{z}(s)\|_{\L^2}\d s 
			\nonumber\\&\leq -\frac{\beta}{2}\int_{0}^{t}\||v(s)+z(s)|^{\delta}(v(s)+z(s)-y(s)-\mathfrak{z}(s))\|_{\L^2}^2\d s\nonumber\\&\quad-\frac{\beta}{2}\int_{0}^{t}\||y(s)+\mathfrak{z}(s)|^{\delta}(v(s)+z(s)-y(s)-\mathfrak{z}(s))\|_{\L^2}^2\d s\nonumber\\&\quad +\frac{\beta}{2}\int_{0}^{t}\|v(s)+z(s)-y(s)-\mathfrak{z}(s))\|_{\L^{2(\delta+1)}}^{2(\delta+1)}\d s+\frac{2(\delta+1)}{2\delta+1}\big(2^{2\delta-1}\beta(2\delta+1)\big)^{\frac{2(\delta+1}{2\delta+1})}\nonumber\\&\qquad\times\frac{1}{(\beta(\delta+1))^{\frac{1}{\delta+1}}}\int_{0}^{t}\big(\|v(s)+z(s)\|_{\L^{2(\delta+1)}}^{\frac{4\delta(\delta+1)}{2\delta+1}}+\|y(s)+\mathfrak{z}(s)\|_{\L^{2(\delta+1)}}^{\frac{4\delta(\delta+1)}{2\delta+1}}\big)\|z(s)-\mathfrak{z}(s)\|_{\L^{2(\delta+1)}}^{\frac{2(\delta+1)}{2\delta+1}}\d s 	\nonumber\\&\leq -\frac{\beta}{2}\int_{0}^{t}\||v(s)+z(s)|^{\delta}(v(s)+z(s)-y(s)-\mathfrak{z}(s))\|_{\L^2}^2\d s\nonumber\\&\quad-\frac{\beta}{2}\int_{0}^{t}\||y(s)+\mathfrak{z}(s)|^{\delta}(v(s)+z(s)-y(s)-\mathfrak{z}(s))\|_{\L^2}^2\d s\nonumber\\&\quad +\frac{\beta}{2}\int_{0}^{t}\|v(s)+z(s)-y(s)-\mathfrak{z}(s))\|_{\L^{2(\delta+1)}}^{2(\delta+1)}\d s+ \frac{\beta}{2}\int_{0}^{t}\|z(s)-\mathfrak{z}(s)\|_{\L^{2(\delta+1)}}^{2(\delta+1)}\d s\nonumber\\&\quad +C(\beta,\delta)\int_{0}^{t}\big(\|v(s)+z(s)\|_{\L^{2(\delta+1)}}^{2(\delta+1)}+\|y(s)+\mathfrak{z}(s)\|_{\L^{2(\delta+1)}}^{2(\delta+1)}\big)\d s.
		\end{align}		Using the estimates \eqref{4.7}-\eqref{4.11} in \eqref{4.6}, we get
		\begin{align}\label{4.12}\nonumber
			\|&v(t)-y(t)\|_{\L^2}^2+\frac{2\nu}{3}\int_{0}^{t}\|\partial_{\xi}(v(s)-y(s))\|_{\L^2}^2\d s+\frac{\beta}{2}\int_{0}^{t}\big(\||v(s)+z(s)|^{\delta}\nonumber\\&\qquad\times(v(s)+z(s)-y(s)-\mathfrak{z}(s))\|_{\L^2}^2+\||y(s)+\mathfrak{z}(s)|^{\delta}(v(s)+z(s)-y(s)-\mathfrak{z}(s))\|_{\L^2}^2\big)\d s
			\nonumber\\&\leq \int_{0}^{t}\big\{ C(\nu,\delta)  \big(\|v(s)+z(s)\|_{\L^{2(\delta+1)}}^{\frac{4\delta(\delta+1)}{\delta+2}}+\|y(s)+\mathfrak{z}(s)\|_{\L^{2(\delta+1)}}^{\frac{4\delta(\delta+1)}{\delta+2}}\big) +C(\beta,\gamma,\delta) \big\} \|v(s)-y(s)\|_{\L^2}^2\d s\nonumber\\&\quad +\int_{0}^{t}\big\{C(\beta,\gamma,\delta)\|z(s)-\mathfrak{z}(s)\|_{\L^2}^2+C(\beta)\|z(s)-\mathfrak{z}(s)\|_{\L^{2(\delta+1)}}^{2(\delta+1)}\big\}\d s\nonumber\\&\quad+C(\nu,\alpha,\delta)\int_{0}^{t}\big(\|v(s)+z(s)\|_{\L^{2(\delta+1)}}^{2\delta}+\|y(s)+\mathfrak{z}(s)\|_{\L^{2(\delta+1)}}^{2\delta}\big)\|z(s)-\mathfrak{z}(s)\|_{\L^{2(\delta+1)}}^2\d s\nonumber\\&\quad+C(\beta,\delta)\int_{0}^{t}\big(\|v(s)+z(s)\|_{\L^{2(\delta+1)}}^{2(\delta+1)}+\|y(s)+\mathfrak{z}(s)\|_{\L^{2(\delta+1)}}^{2(\delta+1)}\big)\d s.
		\end{align}
		Assuming 
		\begin{align*}
			\sup_{s\in[0,T]}\|z(s)\|_{\L^{2(\delta+1)}}\leq\upgamma, 
		\end{align*}
		an application of Gronwall's inequality in \eqref{4.12} yields
		\begin{align}\label{4.13}\nonumber
			&\|v(t)-y(t)\|_{\L^2}^2 \\&\leq C(\nu,\alpha,\beta,\gamma,\upgamma,\delta,T)\bigg\{\sup_{t\in[0,T]}\|z(t)-\mathfrak{z}(t)\|_{\L^{2(\delta+1)}}^2+\sup_{s\in[0,T]}\|z(t)-\mathfrak{z}(t)\|_{\L^{2(\delta+1)}}^{2(\delta+1)}\bigg\},
		\end{align}provided $2\delta^2\leq 2\delta+4$, that is, $\delta\in[1,2]$. 
		
		Since $\A$ is an analytic semigroup satisfying  \eqref{1.7},  from Theorem 5.25, \cite{DaZ}, for $0<\vartheta<\frac{\e}{4}+\frac{\delta-1}{4(\delta+1)}<\frac{\delta}{2(\delta+1)}$, we infer that  $z\in\C([0,T];\mathrm{W}^{\vartheta,2(\delta+1)}(\1))$, $\mathbb{P}$-a.s., and since $\mathrm{W}^{\vartheta,2(\delta+1)}(\1)\subset\H^{\frac{\delta}{2(\delta+1)}+\vartheta}(\1)$ (Sobolev's inequality) implies that  $z\in\C([0,T];\mathrm{H}^{\frac{\delta}{2(\delta+1)}+\vartheta}(\1))$, $\mathbb{P}$-a.s, for $0<\vartheta<\frac{\e}{4}+\frac{\delta-1}{4(\delta+1)}$. Since $a\in\H_0^1(\1)$, it is immediate that $a\in\mathrm{H}^{\frac{\delta}{2(\delta+1)}+\vartheta}(\1)$.	Note that the support of the distribution of the processes $z\in\C([0,T];\H^{\frac{\delta}{2(\delta+1)}+\vartheta}(\1))$ is the closure of the set of functions $\int_0^tR(t-s)w(s)\d s, \ t\in[0,T],\ w\in\L^2(0,T;\H^{\frac{\delta}{2(\delta+1)}+\vartheta}(\1))$. Since $\H^{\frac{\delta}{2(\delta+1)}+\vartheta}(\1)\subset \L^{2(\delta+1)}(\1)$,   for arbitrary $\eta>0$, we have $$0<\mathbb{P}\left\{\sup_{t\in[0,T]}\|z(t)-\mathfrak{z}(t)\|_{\H^{\frac{\delta}{2(\delta+1)}+\vartheta}}<\eta\right\}\leq \mathbb{P}\left\{\sup_{t\in[0,T]}\|z(t)-\mathfrak{z}(t)\|_{\L^{2(\delta+1)}}<\eta\right\}.$$ Let us fix $$\upgamma=\eta+\sup_{t\in[0,T]}\|\mathfrak{z}(t)\|_{\L^{2(\delta+1)}}.$$ Then, we have 
		\begin{align*}
			0&<\mathbb{P}\left\{\sup_{t\in[0,T]}\|z(t)-\mathfrak{z}(t)\|_{\L^{2(\delta+1)}}<\eta\right\}\nonumber\\&\leq \mathbb{P}\left\{\sup_{t\in[0,T]}\|z(t)-\mathfrak{z}(t)\|_{\L^{2(\delta+1)}}<\eta\ \text{ and }\ \sup_{t\in[0,T]}\|z(t)\|_{\L^{2(\delta+1)}} \leq\upgamma \right\}.
		\end{align*}
		Let us now consider 
		\begin{align}\label{4.14}\nonumber
			&\P\left\{\|u(T)-b\|_{\L^2}^2<\eta\right\} \nonumber\\&= \P\left\{\|v(T)+z(T)-y(T)+y(T)-\mathfrak{z}(t)+\mathfrak{z}(t)-b\|_{\L^2}^2<\eta\right\} \nonumber\\& = \P\left\{\|v(T)-y(T)+z(T)-\mathfrak{z}(t)+x(T)-b\|_{\L^2}^2 <\eta\right\} \nonumber \\& \geq \P\left\{\|v(T)-y(T)\|_{\L^2}^2<\frac{\eta}{4}, \; \|z(T)-\mathfrak{z}(t)\|_{\L^2}^2 <\frac{\eta}{4},\;\|x(T)-b\|_{\L^2}^2 <\frac{\eta}{2} \right\} \nonumber\\& = 
			\P\left\{\|v(T)-y(T)\|_{\L^2}^2<\frac{\eta}{4},\; \|z(T)-\mathfrak{z}(t)\|_{\L^2}^2 <\frac{\eta}{4}\right\} \nonumber \\& \geq \P\left\{\sum_{j\in D}\sup_{t\in[0,T]}\|z(t)-\mathfrak{z}(t)\|_{\L^{2(\delta+1)}}^{j}<C(\nu,\alpha,\beta,\gamma,\upgamma,\delta,T,\eta)\right\} \nonumber\\&\geq \P\left\{\sum_{j\in D}\sup_{t\in[0,T]}\|z(t)-\mathfrak{z}(t)\|_{\L^{2(\delta+1)}}^{j}<C(\nu,\alpha,\beta,\gamma,\upgamma,\delta,T,\eta)\ \text{ and }\ \sup_{t\in[0,T]}\|z(t)\|_{\L^{2(\delta+1)}} \leq\upgamma  \right\} \nonumber \\& >0, 
		\end{align}where $ j\in D=\big\{ 2, 2(\delta+1) \big\}$,
		since $\eta>0$ can be chosen arbitrary so that the required result holds. 	Hence the transition semigroup $\mathrm{P}_t, t\geq0$, is  irreducible.
	\end{proof}

	\subsection{Strong Feller property}
	Let $u(\cdot,x)$ be the mild solution of \eqref{2.2}, which has been established in \cite{MTMSGBH}. We show that the corresponding transition semigroup $\mathrm{P}_t,\; t\geq0$, has the strong Feller property on the space $\L^2(\1)$. To prove this property, we take a modified version of SGBH equation. For this purpose, we define a cut-off function and a mollifier as, for any $R>0$ 
	\begin{equation}\label{4.15} \Phi(r)=
		\left\{
		\begin{aligned} 
			&1 \ \text{ for }\  r\in[0,R], 
			\\& 0 \ \text{ for } \ r\in [R+1,\infty),
		\end{aligned}
		\right.
	\end{equation}which is a $\C^1$ function defined on $[0,\infty)$. Now, we define a mollifier 
	\begin{align}\label{4.16}
		M_R(x)= x \Phi(\|x\|_{\L^2}),\; x\in\L^2(\1).
	\end{align}Also note that for any $x\in\L^2(\1)$, $M_R \in \C_b^1(\L^2(\1))$ and \begin{align*}
		\D_xM_R(x) = \Phi(\|x\|_{\L^2})\I+\frac{\Phi'(\|x\|_{\L^2})}{\|x\|_{\L^2}}x \otimes x, \; x\in \L^2(\1).
	\end{align*}
	\begin{proposition}\label{prop3.4}
		If $M_R$ is a mollifier defined by \eqref{4.16}, then the modified SGBH equation
		\begin{equation}\label{4.17}
			\left\{
			\begin{aligned}
				\d u&= \big\{-\nu\A u-\alpha\B(M_R(u))+\beta \c(M_R(u))\}\d t +\G\d\W(t),\\
				u(0)&=x,
			\end{aligned}
			\right.
		\end{equation}has a unique mild solution on  the time interval $[0,T]$. Moreover the transition semigroup $\mathrm{P}_t^R,\; t\geq 0$, has the strong Feller property.
	\end{proposition}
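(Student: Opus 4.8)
The plan is to treat \eqref{4.17} as a semilinear equation with a globally Lipschitz, bounded drift and to read off the strong Feller property from a Bismut--Elworthy--Li type derivative formula, exploiting the non-degeneracy assumption \eqref{1.3}. First I would record that the mollifier bounds the state: by \eqref{4.15}--\eqref{4.16} one has $\|M_R(x)\|_{\L^2}\leq R+1$ for every $x\in\L^2(\1)$, and $M_R\in\C_b^1(\L^2(\1))$ with $\|\D_xM_R(x)\|_{\mathcal{L}(\L^2)}$ bounded uniformly in $x$. Composing with the local Lipschitz bounds \eqref{BLL} and \eqref{CLL} restricted to the ball $\{\|\cdot\|_{\L^2}\leq R+1\}$, the truncated drift $F_R(\cdot):=-\alpha\B(M_R(\cdot))+\beta\c(M_R(\cdot))$ becomes a \emph{globally} Lipschitz and bounded map. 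A contraction-mapping argument in $\C([0,T];\L^2(\1))$ based on the stochastic convolution $z$ and the smoothing estimates \eqref{1.6}--\eqref{1.7}, carried out exactly as in Theorem \ref{thrm2.2} (cf. \cite{MTMSGBH}), then produces a unique mild solution $u(\cdot,x)$ on the whole interval $[0,T]$; because the Lipschitz constant is now uniform in the state, no stopping-time localisation is needed and the transition semigroup $\mathrm{P}_t^R$ is well defined.

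For the strong Feller property I would follow Chapter 7 of \cite{GDJZ}. The uniform $\C_b^1$ character of $F_R$ yields that $x\mapsto u(t,x)$ is G\^ateaux differentiable in $\L^2(\1)$, with derivative $\eta^h(t):=\D_xu(t,x)h$ solving the linearised mild equation $\eta^h(t)=R(t)h+\int_0^tR(t-s)\D F_R(u(s))\eta^h(s)\d s$. Gronwall's inequality together with the smoothing bounds then gives $\sup_{t\in[0,T]}\E\|\eta^h(t)\|_{\L^2}^2\leq C\|h\|_{\L^2}^2$, as well as the short-time estimate $\E\|\A^{\frac{\e}{2}}\eta^h(s)\|_{\L^2}^2\lesssim s^{-\e}\|h\|_{\L^2}^2$ coming from \eqref{1.7}. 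For $\varphi\in\C_b^1(\L^2(\1))$ one then has the derivative representation
\begin{align*}
\langle\D(\mathrm{P}_t^R\varphi)(x),h\rangle=\frac{1}{t}\,\E\left[\varphi(u(t,x))\int_0^t\big(\Q^{-\frac12}\eta^h(s),\d\W(s)\big)\right],
\end{align*}
whose validity hinges on the non-degeneracy \eqref{1.3}.

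The key point, and the main obstacle, is to make sense of $\Q^{-1/2}\eta^h(s)$ and to control its square integrability near $s=0$. Assumption \eqref{1.3}, i.e. $\D(\A^{\frac{\e}{2}})\subset\mathrm{Im}(\Q^{\frac12})$ for some $0<\e<1$, is equivalent (by a closed graph argument) to the boundedness of $\Q^{-1/2}\A^{-\frac{\e}{2}}$, so that $\|\Q^{-1/2}\eta^h(s)\|_{\L^2}\leq\|\Q^{-1/2}\A^{-\frac{\e}{2}}\|\,\|\A^{\frac{\e}{2}}\eta^h(s)\|_{\L^2}$; combined with the smoothing bound this gives $\E\int_0^t\|\Q^{-1/2}\eta^h(s)\|_{\L^2}^2\d s\leq C\int_0^ts^{-\e}\d s<\infty$ since $\e<1$, which legitimises the stochastic integral and hence the derivative formula.

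From the formula one obtains the gradient estimate $\|\D(\mathrm{P}_t^R\varphi)\|_{\sup}\leq C(t)\|\varphi\|_{\sup}$, uniform over $\varphi\in\C_b^1(\L^2(\1))$. Approximating an arbitrary $\varphi\in\mathrm{B}_b(\L^2(\1))$ by $\C_b^1$ functions in the topology of bounded pointwise convergence and passing to the limit using this bound shows that $\mathrm{P}_t^R\varphi$ is bounded and (Lipschitz) continuous for every $\varphi\in\mathrm{B}_b(\L^2(\1))$. Hence $\mathrm{P}_t^R$ is strong Feller. The delicate step throughout is the uniform control of the linearised flow $\eta^h$ in $\D(\A^{\frac{\e}{2}})$ near $s=0$, which is precisely where the image inclusion \eqref{1.3} and the restriction $\e<1$ are used.
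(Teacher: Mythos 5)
Your overall strategy coincides with the paper's: truncate, solve the truncated equation, differentiate the flow via the first variation equation, bound the derivative by Gronwall, and invoke the Bismut--Elworthy formula (Lemma 7.1.3 and Theorem 7.1.1 of \cite{GDJZ}) together with the non-degeneracy of $\G$ to get the gradient estimate and hence strong Feller. The extra detail you supply on $\Q^{-1/2}\A^{-\e/2}$ being bounded under \eqref{1.3} is exactly what those cited results encapsulate, so that part is fine.

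There is, however, one step that would fail as written: the claim that $F_R(u)=-\alpha\B(M_R(u))+\beta\c(M_R(u))$ is a \emph{globally Lipschitz and bounded} map on $\L^2(\1)$. It is not: $\B(v)=v^{\delta}\partial_{\xi}v$, and cutting off the $\L^2$-norm of the state controls neither $\partial_{\xi}M_R(u)$ nor the higher Lebesgue norms entering \eqref{BLL}; the bound $\|\B(u)\|_{\L^2}\leq\|u\|_{\H_0^1}^{\delta+1}$ needs $\H^1$-information that $M_R$ does not provide. Consequently a plain contraction in $\C([0,T];\L^2(\1))$ with a bounded Lipschitz drift does not apply, and the linearised equation for $\eta^h$ cannot be closed by the ordinary Gronwall lemma with the term $\D F_R(u(s))\eta^h(s)$ treated as an $\L^2$-valued perturbation. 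The repair is the one the paper uses in \eqref{4.18}--\eqref{4.21}: write the convective term in divergence form, $\B(v)=\frac{1}{\delta+1}\partial_{\xi}(v^{\delta+1})$, so that the first variation drift is $-\alpha\partial_{\xi}\bigl(M_R^{\delta}(u)M_R'(u)U^R\bigr)$, and use the $\L^1\to\L^2$ smoothing $\|\partial_{\xi}R(t)f\|_{\L^2}\leq Ct^{-3/4}\|f\|_{\L^1}$ of \eqref{1.5} together with the cut-off bounds \eqref{4.21}. This turns both the fixed-point map and the derivative estimate into integral inequalities with the integrable singular kernel $(t-s)^{-3/4}$, which are handled by a short-time contraction (extended by iteration) and a singular Gronwall lemma, respectively. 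Your claimed smoothing bound $\E\|\A^{\e/2}\eta^h(s)\|_{\L^2}^2\lesssim s^{-\e}\|h\|_{\L^2}^2$ likewise has to be derived through this singular-kernel mechanism rather than from a bounded-drift Gronwall argument. With that correction your argument reproduces the paper's proof.
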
\noindent
	\begin{proof} We prove  Proposition \ref{prop3.4} in two steps. In step 1, we put a remark on the existence and uniqueness of the mild solution to the system \eqref{4.17}. In step 2, we discuss the proof of strong Feller property of the corresponding transition semigroup.
		\vskip 0.2 cm
		\noindent
		\textbf{Step 1}: \emph{The existence and uniqueness of mild solution:} The proof of existence and uniqueness of solutions to \eqref{4.17}	is similar to that of Theorem \ref{thm2.6}, provided one can derive an a-priori bound of the form \eqref{214} (see \cite{MTMSGBH} also).
		\vskip 0.2 cm
		\noindent
		\textbf{Step 2}: \emph{Strong Feller property:}
		For $0<R<\infty$, we denote the directional derivative by $U^R(t)$ at $x$ in the direction of $h$ of the mapping $x\mapsto u^R(t,x)$ (where $u^R(\cdot)$ is the solution of the system \eqref{4.18}), that is, \begin{align*}
			U^R(t) = \big[\D_x u^R(t,x)\big]\cdot h ,
		\end{align*}for given $x,h \in \L^2(\1)$. Note that it is also the derivative of the mapping $x\mapsto u^R(t,x)= v^R(t,x)+z(t)$. Thus, $U^R$ is the solution of the first variation equation associated with the system \eqref{4.17} and is given by 
		\begin{equation}\label{4.18}
			\left\{
			\begin{aligned}
				\frac{\d U^R}{\d t}&=-\nu \A U^R-\alpha \partial_{\xi} \big(M_R^{\delta}(u)M_R'(u)U^R\big)+\beta(1+\gamma)(1+\delta)M_R^{\delta}(u)M_R'(u)U^R\\&\;\quad -\beta\gamma M_R'(u)U^R-\beta(2\delta+1)M_R^{2\delta}(u)M_R'(u)U^R ,\\
				U^R(0)&=h.
			\end{aligned}
			\right.
		\end{equation}We consider the mild  solution of \eqref{4.18}, and taking the $\L^2$-norm to find 
		\begin{align}\label{4.19}
			\|U^R(t)\|_{\L^2} &\leq \|R(t)h\|_{\L^2}+\alpha\int_{0}^{t}\|R(t-s)\partial_\xi\big(M_R^{\delta}(u)M_R'(u)U^R\big)\|_{\L^2}\d s \nonumber\\&\quad+\beta(1+\gamma)(1+\delta)\int_{0}^{t}\|R(t-s)M_R^{\delta}(u)M_R'(u)U^R\|_{\L^2}\d s \nonumber\\&\quad+\beta\gamma\int_{0}^{t}\|R(t-s)M_R'(u)U^R\|_{\L^2}\d s\nonumber\\&\quad+\beta(2\delta+1)\int_{0}^{t}\|R(t-s)M_R^{2\delta}(u)M_R'(u)U^R\|_{\L^2}\d s.
		\end{align} Applying the semigroup property (see \eqref{1.6} and \eqref{1.5}) on the terms of the right hand side of \eqref{4.19}, we get
		\begin{align*}
			\|R(t-s)\partial_\xi\big(M_R^{\delta}(u)M_R'(u)U^R\big)\|_{\L^2} &\leq C(t-s)^{-\frac{3}{4}}\|M_R^{\delta}(u)M'_R(u)U^R\|_{\L^1},\\
			\|R(t-s)M_R^{\delta}(u)M_R'(u)U^R\|_{\L^2} &\leq C(t-s)^{-\frac{1}{4}}\|M_R^{\delta}(u)M_R'(u)U^R\|_{\L^1},\\
			\|R(t-s)M_R'(u)U^R\|_{\L^2} &\leq C\|M_R'(u)U^R\|_{\L^2},\\
			\|R(t-s)M_R^{2\delta}(u)M_R'(u)U^R\|_{\L^2}& \leq C(t-s)^{-\frac{1}{4}}\|M_R^{2\delta}(u)M_R'(u)U^R\|_{\L^1}.
		\end{align*}Using the above estimates in \eqref{4.19}, we obtain 
		\begin{align}\label{4.20}\nonumber
			\|U^R(t)\|_{\L^2} &\leq \|h\|_{\L^2}+\alpha C\int_{0}^{t}(t-s)^{-\frac{3}{4}}\|M_R^{\delta}(u)M'_R(u)U^R\|_{\L^1}\d s \\&\quad+C\beta(1+\gamma)(1+\delta)\int_{0}^{t}(t-s)^{-\frac{1}{4}}\|M_R^{\delta}(u)M_R'(u)U^R\|_{\L^1}\d s \nonumber\\&\quad+C\beta\gamma \int_{0}^{t}\|M_R'(u)U^R\|_{\L^2}\d s +\beta(2\delta+1)\int_{0}^{t}(t-s)^{-\frac{1}{4}}\|M_R^{2\delta}(u)M_R'(u)U^R\|_{\L^1}\d s.
		\end{align} Now, using H\"older's inequality, for $x,z\in\L^2(\1)$, we have 
		\begin{equation}\label{4.21}
			\left\{
			\begin{aligned}
				\|M_R'(x)z\|_{\L^2} &\leq C\|z\|_{\L^2}, \\
				\|M_R^{\delta}(x)M'_R(x)z\|_{\L^1}&\leq \|M_R^{\delta}(x)\|_{\L^2}\|M_R'(x)z\|_{\L^2} \leq C\|z\|_{\L^2}, \\
				\|M_R^{2\delta}(x)M'_R(x)z\|_{\L^1} &\leq \|M_R^{2\delta}(x)\|_{\L^2}\|M_R'(x)z\|_{\L^2}\leq C\|z\|_{\L^2},
			\end{aligned}
			\right.
		\end{equation}where the constant $C$ depends on the choice of the cut-off function $\Phi_R$. Thus, one can conclude that the solution of \eqref{4.18} exists and belongs to $\C([0,T];\L^2(\1)),$ provided $T$ is sufficiently small. Substituting the bounds \eqref{4.21} in \eqref{4.20} and an application of Gronwall's lemma gives
		%Taking the inner product with $U^R(\cdot)$, using bounds from \eqref{4.19} and an application of Gronwall's lemma gives  
		\begin{align*}
			\sup_{t\in[0,T]} \|U^R(t,x)\|_{\L^2} \leq C_T\|h\|_{\L^2}, \; \text{ for } h,x \in \L^2(\1),
		\end{align*}where $C_T$ is a non-random constant.
		From Bismut-Elworthy formula (Lemma 7.1.3, \cite{GDJZ}) and Theorem 7.1.1, \cite{GDJZ}, we obtain that the strong Feller property holds for a short time  and then with the help of semigroup property, we can extend that interval to $[0,+\infty)$.
	\end{proof}
	
	\begin{proposition}\label{prop4.3}
		The transition semigroup $\mathrm{P}_t,\; t\geq 0$, associated to the solution $u(\cdot)$ of \eqref{2.2} has the strong Feller property.
	\end{proposition}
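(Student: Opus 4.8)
The plan is to transfer the strong Feller property already established for the truncated dynamics in Proposition \ref{prop3.4} to the genuine semigroup $\mathrm{P}_t$ by a localization argument built on the mollifier $M_R$ together with an exit-time estimate. The crucial structural fact is that, since $M_R(x)=x$ whenever $\|x\|_{\L^2}\leq R$ (recall \eqref{4.15}--\eqref{4.16}), the solution $u(\cdot,x)$ of \eqref{2.2} and the solution $u^R(\cdot,x)$ of the modified equation \eqref{4.17} coincide up to the first time the trajectory leaves the ball of radius $R$ in $\L^2(\1)$. The whole difficulty of the problem has thus been absorbed into Proposition \ref{prop3.4}, and what remains is a comparison between $\mathrm{P}_t$ and $\mathrm{P}_t^R$.

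First I would fix $t>0$, $\varphi\in\mathrm{B}_b(\L^2(\1))$, and introduce the stopping time
\begin{align*}
	\tau_R^x=\inf\left\{s\geq0:\|u(s,x)\|_{\L^2}>R\right\}.
\end{align*}
On the event $\{\tau_R^x>t\}$ the two solutions agree, so that $u(t,x)=u^R(t,x)$, and writing $\mathrm{P}_t^R$ for the transition semigroup of \eqref{4.17} one obtains
\begin{align*}
	\left|\mathrm{P}_t\varphi(x)-\mathrm{P}_t^R\varphi(x)\right|
	&\leq\E\left[\left|\varphi(u(t,x))-\varphi(u^R(t,x))\right|\chi_{\{\tau_R^x\leq t\}}\right]\\
	&\leq 2\|\varphi\|_{\sup}\,\P\left(\tau_R^x\leq t\right).
\end{align*}
Thus the argument reduces to controlling the exit probability $\P(\tau_R^x\leq t)=\P(\sup_{s\in[0,t]}\|u(s,x)\|_{\L^2}>R)$ uniformly for $x$ in bounded subsets of $\L^2(\1)$.

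The uniform exit estimate I would derive from the energy equality of Definition \ref{def2.3}, exactly as in the derivation of \eqref{5.14}--\eqref{Expectation}, but upgraded to a supremum-in-time bound. Applying the It\^o formula to $\|u(\cdot)\|_{\L^2}^2$, estimating the nonlinear terms as in \eqref{5.15}, and treating the martingale $2\int_0^t(\G\d\W(s),u(s))$ with the Burkholder--Davis--Gundy inequality, I expect a bound of the form
\begin{align*}
	\E\left[\sup_{s\in[0,t]}\|u(s,x)\|_{\L^2}^2\right]\leq C_t\left(1+\|x\|_{\L^2}^2\right),
\end{align*}
with $C_t$ independent of $x$. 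Markov's inequality then gives $\sup_{\|x\|_{\L^2}\leq\rho}\P(\tau_R^x\leq t)\leq C_t(1+\rho^2)/R^2\to0$ as $R\to\infty$, for every fixed $\rho>0$.

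Finally I would combine the two ingredients. Given $x_0$ and a sequence $x_n\to x_0$ in $\L^2(\1)$, the points $x_n$ remain in a ball of some radius $\rho$; for fixed $R$, Proposition \ref{prop3.4} yields $\mathrm{P}_t^R\varphi(x_n)\to\mathrm{P}_t^R\varphi(x_0)$, while the comparison estimate bounds $|\mathrm{P}_t\varphi(x_n)-\mathrm{P}_t^R\varphi(x_n)|$ by $2\|\varphi\|_{\sup}\sup_{\|x\|_{\L^2}\leq\rho}\P(\tau_R^x\leq t)$, uniformly in $n$. Letting $n\to\infty$ and then $R\to\infty$ shows $\mathrm{P}_t\varphi(x_n)\to\mathrm{P}_t\varphi(x_0)$, that is, $\mathrm{P}_t\varphi\in\C_b(\L^2(\1))$, which is the strong Feller property. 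The main obstacle is precisely the uniform-in-initial-datum supremum moment bound: the estimates of Section \ref{sec3} are phrased in expectation at a fixed time, so the genuinely new work is the passage to $\E[\sup_{s\leq t}\|u(s,x)\|_{\L^2}^2]$ via the Burkholder--Davis--Gundy inequality, while keeping every constant independent of $x$ on bounded sets.
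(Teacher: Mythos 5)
Your localization scheme is exactly the one the paper uses: compare $\mathrm{P}_t$ with the semigroup $\mathrm{P}_t^R$ of the truncated equation \eqref{4.17}, bound the difference by $2\|\varphi\|_{\sup}\,\P(\tau^R_x\leq t)$ using that $u$ and $u^R$ agree up to the exit time from the ball of radius $R$, and then let $R\to\infty$ using the continuity of $\mathrm{P}_t^R\varphi$ from Proposition \ref{prop3.4}. The only place where you genuinely diverge is the uniform exit estimate. The paper does not apply It\^o's formula or the Burkholder--Davis--Gundy inequality at all: it invokes the pathwise (deterministic, $\omega$-by-$\omega$) bound \eqref{214} for $v^R=u^R-z$, which dominates $\sup_{s\leq t}\|u^R(s,x)\|_{\L^2}^2$ by an a.s.\ finite random variable depending only on $M$ and $\int_0^t\|z(s)\|_{\L^{2(\delta+1)}}^{2(\delta+1)}\d s$, independent of $x$ on $\{\|x\|_{\L^2}\leq M\}$ and of $R$; the convergence \eqref{4.22} then follows directly, with no moment computation. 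Your route via $\E\bigl[\sup_{s\leq t}\|u(s,x)\|_{\L^2}^2\bigr]\leq C_t(1+\|x\|_{\L^2}^2)$ and Markov's inequality is also valid, but only under the finite-trace assumption \eqref{13}: the It\^o expansion of $\|u\|_{\L^2}^2$ produces the term $\Tr(\G\G^*)t$, which may be infinite under the weaker assumption \eqref{1.3}, whereas the decomposition $u=v+z$ only needs the a.s.\ regularity of the stochastic convolution $z$ and therefore covers both noise regimes in which the paper asserts the strong Feller property. So either restrict your claim to the setting \eqref{13}, or replace the BDG moment bound by the already-available pathwise estimate \eqref{214}; with that adjustment the proof is complete and coincides with the paper's.
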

	\begin{proof}
		For $0<R<\infty$, let $\mathrm{P}_t^R,\; t\geq 0$, be the associated semigroup to the solution $u^R(t,x)$ of the system \eqref{4.17}. Let us define 
		\begin{align*}
			\tau_{x}^R = \inf\{t\geq 0:\|u^R(t,x)\|_{\L^2}>R\}.
		\end{align*} It is clear form the definition of cut-off function \eqref{4.15} and \eqref{4.16} that 
		\begin{align*}
			u(t,x)=u^R(t,x),\  \text{ for all } \ t\leq \tau_{x}^R, \; x\in\L^2(\1).
		\end{align*}Let $\psi$ be any arbitrary function in $\B_b(\L^2(\1))$, then 
		\begin{align*}
			\big|\mathrm{P}_t^R\psi(x)-\mathrm{P}_t\psi(x)\big| &= \big|\E[\psi(u^R(t,x))]-\E[\psi(u(t,x))]\big|
			\\& = \big|\E[\psi(u^R(t,x))-\psi(u(t,x))]\chi_{\{\tau_{x}^R\leq t\}}\big|
			\\& \leq 2\sup_{z\in\L^2(\1)}|\psi(z)|\P\{\tau_{x}^R\leq t\}.
		\end{align*}We know that the functions $\mathrm{P}_t^R\psi$ are continuous for all $R>0 \text{ and } t>0$. Therefore it is enough to prove that for any $M>0$ and $t>0$,
		\begin{align}\label{4.22}
			\lim_{R\to\infty}\sup_{\|x\|_{\L^2}\leq M}\P \{\tau_{x}^R\leq t\}=0.
		\end{align}We have already set in section \ref{sec2} that $u(t)=v(t)+z(t),\; t\geq 0$, and a similar formulation corresponding to the system \eqref{4.17}  gives $u^R(t)=v^R(t)+z(t),\; t\geq 0$ and\begin{align*}
			\sup_{0\leq s\leq t}\|u^R(s,x)\|_{\L^2}^2 &\leq 2\sup_{0\leq s\leq t}\|v^R(s,x)\|_{\L^2}^2+2\sup_{0\leq s\leq t}\|z(s,x)\|_{\L^2}^2.
		\end{align*}
		Note that the estimate \eqref{214} is valid for the process $u^R(\cdot)$ also. Thus, if  $\|x\|_{\L^2}\leq M$, then we get
		\begin{align*}
			\|u^R(s,x)\|_{\L^2}^2 &\leq 
			C(\alpha,\beta,\gamma,\delta,\nu)\left(M^2+\int_0^t\|z(s)\|_{\L^{2(\delta+1)}}^{2(\delta+1)}\d s\right)+2\sup_{0\leq s\leq t}\|z(s,x)\|_{\L^2}^2.
		\end{align*}Since the right hand side of the above inequality is finite and independent of $x$, the equality \eqref{4.22} holds.
	\end{proof}

	\begin{theorem}\label{thm4.4}
		There exists a unique invariant measure $\mu$  for the transition semigroup $\mathrm{P}_t,\ t\geq 0$, corresponding to solutions of  \eqref{2.2} ($\delta\in[1,2)$ under assumption \eqref{1.3} and $\delta\in[1,2]$ under assumption \eqref{13}). Moreover $\mu$ is ergodic and strongly mixing.
	\end{theorem}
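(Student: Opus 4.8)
The plan is to deduce Theorem \ref{thm4.4} directly from Doob's theorem by assembling the three structural facts already established: existence of an invariant measure, irreducibility, and the strong Feller property. First I would fix the parameter ranges so that all three ingredients are simultaneously available. Under assumption \eqref{1.3}, existence of an invariant measure is guaranteed by Theorem \ref{thEx2} for $\delta\in[1,2)$, irreducibility by Proposition \ref{prop4.1} for $\delta\in[1,2]$, and the strong Feller property by Proposition \ref{prop4.3}; the intersection of these ranges forces $\delta\in[1,2)$. Under assumption \eqref{13}, existence comes from Theorem \ref{thEx1} for all $\delta\geq1$ (with $\beta\nu>2^{2(\delta-1)}\alpha^2$ when $\delta>2$), while irreducibility again restricts to $\delta\in[1,2]$, so the admissible range becomes $\delta\in[1,2]$. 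These are precisely the intervals recorded in the statement.

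Next I would invoke the regularity consequence of combining strong Feller with irreducibility. By Proposition \ref{prop4.3} the semigroup $\mathrm{P}_t$ maps $\mathrm{B}_b(\L^2(\1))$ into $\C_b(\L^2(\1))$ for every $t>0$, and by Proposition \ref{prop4.1} it is irreducible on $\L^2(\1)$. A standard argument (Khasminskii's theorem, see \cite{GDJZ}) then shows that all transition probabilities $\mathrm{P}(t,x,\cdot)$, for $t>0$ and $x\in\L^2(\1)$, are mutually equivalent, i.e. the semigroup is regular.

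With regularity in hand, Doob's theorem (see \cite{GDJZ}) applies: a regular Markov semigroup possessing an invariant measure $\mu$ admits exactly one such measure, and $\mu$ is equivalent to every $\mathrm{P}(t,x,\cdot)$ with $t>0$. The same result yields that $\mathrm{P}(t,x,\cdot)$ converges to $\mu$ in total variation as $t\to\infty$; this convergence is exactly the strong mixing property, and ergodicity follows as its standard consequence. Combining the existence statement from Theorems \ref{thEx1} and \ref{thEx2} with the uniqueness, strong mixing, and ergodicity just obtained completes the argument.

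Since every substantive estimate has already been carried out elsewhere in the paper -- the exact-controllability argument behind Proposition \ref{prop4.1}, the Bismut--Elworthy gradient bound behind Proposition \ref{prop4.3}, and the tightness arguments of Theorems \ref{thEx1} and \ref{thEx2} -- no genuine analytic obstacle remains at this stage; the theorem is essentially a corollary. The only point demanding care is the bookkeeping of the $\delta$-ranges, so that the hypotheses of Doob's theorem (existence together with strong Feller and irreducibility) hold on a common interval, which is exactly what pins the final statement to $\delta\in[1,2)$ under \eqref{1.3} and $\delta\in[1,2]$ under \eqref{13}.
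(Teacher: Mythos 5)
Your proposal is correct and follows essentially the same route as the paper: existence from Theorems \ref{thEx1} and \ref{thEx2}, then Doob's theorem applied to the strong Feller (Proposition \ref{prop4.3}) and irreducible (Proposition \ref{prop4.1}) semigroup to get uniqueness, with ergodicity and strong mixing as standard consequences. Your additional bookkeeping of the admissible $\delta$-ranges and the remark on regularity via mutual equivalence of transition probabilities are consistent with, and slightly more explicit than, the paper's argument.
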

	\begin{proof}
		From Theorems \ref{thEx1} and  \ref{thEx2}, we infer the existence of an invariant measure $\mu$  for the transition semigroup $\mathrm{P}_t,\ t\geq 0$, corresponding to solutions of  \eqref{2.2}. Since the semigroup is strong Feller (Proposition \ref{prop4.3}) and irreducible (Proposition \ref{prop4.1}), the uniqueness of invariant measures follows by Doob's theorem (Theorem 4.2.1, \cite{GDJZ}). Since $\mu$ is a unique invariant measure, ergodicity follows from Theorem 3.2.6, \cite{GDJZ} and strongly mixing is an immediate consequence of Theorem 4.2.1, \cite{GDJZ}. 
	\end{proof}

	\section{Large Deviation Principle}\label{sec5}\setcounter{equation}{0}
	In this section, we prove the LDP w.r.t. the topology $\tau$ and Donsker-Varadhan LDP of the occupation measure for the SGBH equation \eqref{1p1} with $x\in\L^2(\1)$ under the assumption \eqref{13}.  Our goal is to derive the LDP of the occupation measure $L_t$ of the solution $u(\cdot)$ to the system \eqref{1p1}, where the occupation measure is defined as
	\begin{align*}
		L_t(A):= \frac{1}{t}\int_{0}^{t}\delta_{u(s)}(A)\d s, \text{ for all } A \in \mathcal{B}(\L^2(\1)),
	\end{align*}where $\delta_a$ denotes the Dirac measure concentrated at point $a$, and $\mathcal{B}(\L^2(\1))$ represents the Borelian $\sigma$-field in $\L^2(\1)$. The Donsker-Varadhan LDP for stochastic Burgers equation, 2D stochastic Navier-Stokes equations, stochastic convective Brinkman-Forchheimer equations,  is obtained in  \cite{MG2,MG1,AKMTM}, respectively, and we follow these works to obtain Donsker-Varadhan LDP  for our problem. All the results obtained in this section are true  for $\delta\in[1,2]$ with  any $\nu,\alpha,\beta>0, \gamma\in(0,1)$  (see Theorem \ref{thm2.6}). Let us state our main result of this section:
	\begin{theorem}\label{thrm5.1}
		Assume that $\Tr(\G\G^*)<\infty$ and \eqref{13} holds. Let $ 0<\lambda_0 <\frac{\pi^2\nu}{2\|\Q\|_{\mathcal{L}(\L^2(\1))}} $, where $\|\Q\|_{\mathcal{L}(\L^2(\1))}$ is the norm of $\Q:=\G\G^*$ as an operator in $\L^2(\1)$ and 
		\begin{align}\label{5.1}
			\Psi(x)=e^{\lambda_0\|x\|_{\L^2}^2},\;\; \mathcal{M}_{\lambda_0,R}:= \bigg\{\varrho \in \M_1(\L^2(\1)): \int_{\L^2(\1)}\Psi(x)\varrho(\d x)\leq R \bigg\}.
		\end{align}
		The family $ \P_{\varrho}(L_{T} \in \cdot) \text{ as } T \rightarrow +\infty  $ satisfies the LDP w.r.t. the topology $\tau$, with speed $T$ and rate function $\J$ uniformly for any initial measure $\varrho$ in $\mathcal{M}_{ \lambda_{0},R}, \text{ where } R >1$ is any fixed number. Here the rate function $ \J:\M_{1}(\L^2(\1)) \rightarrow [0,+\infty] $ is the level-2 entropy of Donsker-Varadhan (defined in \eqref{5.7} below). Moreover, we have 
		\begin{itemize}
			\item[(i)] $\J$ is a good rate function on $\M_{1}(\L^2(\1))$ equipped with the topology $\tau$ of the convergence against bounded and Borelian functions, that is, $[J\leq a]$ is $\tau$-compact for every $a\in\R^+$.
			\item[(ii)] For all open sets $\G \text{ in } \M_{1}(\L^2(\1))$ w.r.t. the topology $\tau$,
			\begin{align}\label{5.2}
				\liminf_{T\to\infty} \frac{1}{T} \log \inf_{\varrho \in \mathcal{M}_{ \lambda_{0},R}}\P_{\varrho}\left\{L_{T}\in \G\right\} \geq -\inf_{\G} \J.\end{align}
			\item[(iii)] For all closed sets $\F \text{ in } \M_{1}(\L^2(\1))$ w.r.t. the topology $\tau$,
			\begin{align}\label{5.3}
				\limsup_{T\to\infty} \frac{1}{T} \log \sup_{\varrho \in \mathcal{M}_{ \lambda_{0},R}}\P_{\varrho}\left\{L_{T}\in \F\right\}\leq -\inf_{\F} \J.\end{align} 
		\end{itemize}
		Furthermore, we have 
		\begin{align}\label{5.4}
			\J(\varrho) < +\infty \Rightarrow \varrho\ll \mu, \quad \varrho(\H_0^1(\mathcal{O})) =1\  \text{ and }\  \int_{\H_0^1} \|\partial_{\xi}x\|_{\L^2}^{2}\d\varrho < +\infty, 
		\end{align} 
		where $\mu$ is the unique invariant probability measure of $u(t,\cdot)$.
	\end{theorem}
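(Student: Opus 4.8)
The plan is to deduce Theorem~\ref{thrm5.1} from the abstract criterion of Wu \cite{LW1}: if a Markov process is strong Feller and topologically irreducible and satisfies the \emph{hyper-exponential recurrence} to some compact set, then its occupation measures obey a large deviation principle with respect to the $\tau$-topology, with the level-2 Donsker--Varadhan entropy $\J$ of \eqref{5.7} as rate function, uniformly over initial laws having a controlled exponential moment (that is, over $\mathcal{M}_{\lambda_0,R}$). The two structural hypotheses are already in hand for $\delta\in[1,2]$: strong Feller in Proposition~\ref{prop4.3}, irreducibility in Proposition~\ref{prop4.1}, and the existence and uniqueness of the invariant measure $\mu$ in Theorem~\ref{thm4.4}. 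Hence the assertions (i)--(iii), together with the uniformity over $\mathcal{M}_{\lambda_0,R}$, will transcribe Wu's theorem once two analytic inputs are supplied, namely the exponential moment estimate of Proposition~\ref{prop4.10} and the hyper-exponential recurrence to a compact subset of $\L^2(\1)$.

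First I would establish the exponential estimate. Applying the infinite-dimensional It\^o formula to $\Psi(u(t))=e^{\lambda_0\|u(t)\|_{\L^2}^2}$ and using the energy balance of Definition~\ref{def2.3} (the convective term drops out by \eqref{2.1} and the reaction term is handled as in \eqref{5.14}), while bounding the quadratic variation of the stochastic integral by $4(u,\Q u)\,\d t\le 4\|\Q\|_{\mathcal{L}(\L^2(\1))}\|u\|_{\L^2}^2\,\d t$, the drift of $\Psi(u)$ is controlled by
\[
\Psi(u)\Big[\lambda_0\big(-2\nu\|\partial_\xi u\|_{\L^2}^2+2\lambda_0\|\Q\|_{\mathcal{L}(\L^2(\1))}\|u\|_{\L^2}^2\big)+\lambda_0\Tr(\Q)+C\Big].
\]
Invoking the Poincar\'e inequality $\|\partial_\xi u\|_{\L^2}^2\ge\pi^2\|u\|_{\L^2}^2$, the bracket is dominated by $-2\lambda_0(\nu\pi^2-\lambda_0\|\Q\|_{\mathcal{L}(\L^2(\1))})\|u\|_{\L^2}^2+\lambda_0\Tr(\Q)+C$, and the hypothesis $0<\lambda_0<\frac{\pi^2\nu}{2\|\Q\|_{\mathcal{L}(\L^2(\1))}}$ forces $\nu\pi^2-\lambda_0\|\Q\|_{\mathcal{L}(\L^2(\1))}>\tfrac12\nu\pi^2>0$. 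This yields a drift inequality of Foster--Lyapunov type, $\mathcal{L}\Psi\le-c\,\Psi+C'$ for the generator $\mathcal{L}$ of the process, whence $\sup_{t\ge0}\E_x[\Psi(u(t))]\le\Psi(x)+C'/c$ uniformly for $x\in\mathcal{M}_{\lambda_0,R}$; this is precisely Proposition~\ref{prop4.10}.

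The genuinely delicate point is the hyper-exponential recurrence, and I expect it to be the main obstacle. The Lyapunov function $\Psi$ only measures the $\L^2$-norm, so its sublevel sets are balls of $\L^2(\1)$, which are \emph{not} compact; the exponential estimate alone therefore does not produce recurrence to a compact set. To repair this I would couple the exponential $\L^2$-control with the smoothing effect of the analytic semigroup $R(t)$ already exploited in Theorem~\ref{thEx2}: the bounds \eqref{1.6}--\eqref{1.7} upgrade the solution, after a unit time lapse, into a bounded set of $\D(\A^\sigma)$ (respectively of $\H_0^1(\1)$), which embeds compactly into $\L^2(\1)$ by the compact embedding $\H_0^1(\1)\subset\L^2(\1)$. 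The recurrence to such a compact ball then reduces to showing that the associated return times carry exponential moments of every order, uniformly in the starting point; this follows by inserting the drift inequality $\mathcal{L}\Psi\le-c\Psi+C'\mathbf{1}_K$ into the standard exponential-martingale and optional-stopping argument and combining it with the smoothing estimate. Balancing the exponential growth permitted by $\lambda_0$ against the regularity gain is exactly where the quantitative restriction on $\lambda_0$ is consumed, and this is the crux of the proof.

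Finally, for the regularity conclusion \eqref{5.4} I would argue from the variational description of $\J$. Since $\mu$ is the unique invariant measure and the process is irreducible and strong Feller, any $\varrho$ with $\J(\varrho)<+\infty$ is automatically absolutely continuous with respect to $\mu$, a general property of the Donsker--Varadhan entropy in the ergodic strong-Feller setting. To obtain the energy bound I would test the infimum defining $\J$ in \eqref{5.7} with $u=e^{\epsilon\|x\|_{\L^2}^2}$, for which $\mathcal{L}u/u=\epsilon\,\mathcal{L}(\|x\|_{\L^2}^2)$ plus a nonnegative correction of order $\epsilon^2$; together with the drift bound $\mathcal{L}(\|x\|_{\L^2}^2)\le-2\nu\|\partial_\xi x\|_{\L^2}^2+C$ and a small enough $\epsilon$ to absorb the correction, this gives $\J(\varrho)\ge 2\nu\epsilon\int\|\partial_\xi x\|_{\L^2}^2\,\d\varrho-C'$. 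Consequently $\int_{\H_0^1}\|\partial_\xi x\|_{\L^2}^2\,\d\varrho<+\infty$, and in particular $\varrho(\H_0^1(\1))=1$, which completes the argument.
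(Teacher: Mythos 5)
Your overall architecture is the right one (Wu's hyper-exponential recurrence criterion plus strong Feller, irreducibility and the exponential moment bound), but there is a genuine gap at exactly the point you flag as the crux. The exponential estimate you derive via the Foster--Lyapunov inequality $\mathcal{L}\Psi\le -c\Psi+C'$ only controls $\sup_t\E_x\big[e^{\lambda_0\|u(t)\|_{\L^2}^2}\big]$; this is \emph{not} Proposition \ref{prop4.10}. The whole point of that proposition is that the dissipation integral is kept \emph{inside} the exponential: applying It\^o to $e^{\lambda_0 Z_n(t)}$ with $Z_n(t)=\|u_n(t)\|_{\L^2}^2+\nu\int_0^t\|\partial_\xi u_n\|_{\L^2}^2\d s+\tfrac{\beta}{2}\int_0^t\|u_n\|_{\L^{2(\delta+1)}}^{2(\delta+1)}\d s$, the quadratic variation term $2\lambda_0\|\G^*u_n\|_{\L^2}^2$ is absorbed by only \emph{half} of the retained $-\nu\|\partial_\xi u_n\|_{\L^2}^2$ via Poincar\'e (this is where $\lambda_0<\pi^2\nu/(2\|\Q\|_{\mathcal{L}(\L^2)})$ is used), leaving $\E^x\exp\big(\lambda_0\nu\int_0^t\|\partial_\xi u(s)\|_{\L^2}^2\d s\big)\le e^{\lambda_0\|x\|_{\L^2}^2+\lambda_0 t(\Tr\Q+C)}$. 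With this stronger bound the recurrence to the compact set $K=\{x\in\H_0^1:\|\partial_\xi x\|_{\L^2}\le M\}$ is elementary: $\P_\varrho\{\tau_K^{(1)}>n\}\le\P_\varrho\{L_n(K^c)\ge 1-\tfrac1n\}$, $L_n(K^c)\le M^{-2}L_n(\|\partial_\xi x\|_{\L^2}^2)$, and Chebyshev with the exponential moment gives $\P_\varrho\{\tau_K^{(1)}>n\}\le\varrho(e^{\lambda_0\|\cdot\|^2})e^{-n\lambda_0(M^2/2-\Tr\Q-C)}$. Since $M$ may be taken arbitrarily large, one obtains $\E^\varrho[e^{\lambda\tau_K^{(1)}}]<\infty$ for \emph{every} $\lambda>0$, which is what hyper-exponential recurrence demands. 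No parabolic smoothing and no optional stopping are needed.

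Your proposed repair---upgrading the solution into a bounded set of $\D(\A^\sigma)$ after unit time and running an exponential-martingale/optional-stopping argument with $\mathcal{L}\Psi\le-c\Psi+C'\mathbf{1}_K$---would not close the gap as sketched: a single drift inequality of that form yields exponential moments of the return time only up to the fixed order $c$, whereas Wu's criterion requires them for all $\lambda>0$ (with $K=K_\lambda$ allowed to vary), and you do not explain how the order would be pushed arbitrarily high. Moreover the smoothing bounds of the type used in Theorem \ref{thEx2} control $\|\A^\sigma u(t+1)\|_{\L^2}$ only in terms of pathwise quantities over $[t,t+1]$ whose exponential moments are precisely what is at issue. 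For the final assertion \eqref{5.4}, your generator computation with $e^{\epsilon\|x\|_{\L^2}^2}$ is heuristic for this SPDE; the paper instead obtains $\varrho\ll\mu$ from Lemma \ref{lem4.5} and the energy bound by testing the Legendre-dual description of $\J$ against the truncations $V_n=(\lambda_0\nu\|\partial_\xi x\|_{\L^2}^2)\wedge n$, whose Cram\'er functional is finite exactly because of the integrated exponential estimate \eqref{5.21}---again the same stronger estimate you are missing.
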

	\begin{corollary}\label{cor4.2}
		Let $(\mathbb{B}, \|\cdot\|_{\mathbb{B}})$ be a separable Banach space, and $\psi: \H_0^1(\1) \to \mathbb{B}$ be a measurable function, bounded on balls $\{ x:\|\A^{\frac{1}{2}}x\|_{\L^2(\1)} \leq R\}$ and satisfying 
		\begin{align}\label{5.5} \lim_{\|\A^{\frac{1}{2}}x\|_{\L^2} \to \infty}  \frac{\|\psi(x)\|_{\mathbb{B}}}{\|\A^{\frac{1}{2}}x\|^{2}_{\L^2}} = 0.
		\end{align}
		Then $\mathbb{P}_{\varrho}(L_{T}(\psi) \in \cdot)$ satisfies the Donsker-Varadhan LDP on $\mathbb{B}$ with speed $T$ and the rate function $\I_{\psi}$ given by $$ \I_{\psi}(y) = \inf \{\J(\varrho): \J(\varrho) < +\infty,\ \varrho(\psi) = y\}, \ \text{ for all }\ y \in \mathbb{B},$$
		uniformly over initial distributions $\varrho \in \mathcal{M}_{\lambda_{0}, R}$ (for any $R >1$).
	\end{corollary}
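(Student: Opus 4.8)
The plan is to derive the Corollary from Theorem \ref{thrm5.1} via the contraction principle applied to bounded truncations of $\psi$, combined with an exponential approximation that removes the truncation in the large-deviation limit. Recall that $\|\A^{\frac12}x\|_{\L^2}^2=\|\partial_{\xi}x\|_{\L^2}^2$, and that by \eqref{5.4} every $\varrho$ in the effective domain $\{\J<+\infty\}$ is carried by $\H_0^1(\1)$ with $\int_{\H_0^1}\|\partial_{\xi}x\|_{\L^2}^2\d\varrho<+\infty$. The boundedness of $\psi$ on balls together with \eqref{5.5} gives, for each $\epsilon>0$, an affine bound
\begin{align}\label{eq:affine}
	\|\psi(x)\|_{\mathbb{B}}\le\epsilon\,\|\partial_{\xi}x\|_{\L^2}^2+C_\epsilon,\qquad x\in\H_0^1(\1),
\end{align}
so that the linear functional $\varrho\mapsto\varrho(\psi)=\int_{\H_0^1}\psi\,\d\varrho\in\mathbb{B}$ is well defined and finite on $\{\J<+\infty\}$.

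First I would set $\psi_N(x):=\psi(x)\chi_{\{\|\A^{\frac12}x\|_{\L^2}\le N\}}$, bounded Borel maps into $\mathbb{B}$. Since the $\tau$-topology is convergence tested against bounded Borel scalar functions, $\varrho\mapsto\langle b^*,\varrho(\psi_N)\rangle$ is $\tau$-continuous for every $b^*\in\mathbb{B}^*$; hence $\varrho\mapsto\varrho(\psi_N)$ is continuous from $(\M_1(\L^2(\1)),\tau)$ into $\mathbb{B}$ with its weak topology. Applying the contraction principle to the uniform LDP of Theorem \ref{thrm5.1} yields, uniformly over $\varrho\in\mathcal{M}_{\lambda_0,R}$, a weak-topology LDP on $\mathbb{B}$ for $\P_\varrho(L_T(\psi_N)\in\cdot)$ with good rate function
\begin{align}\label{eq:ItN}
	\I_{\psi_N}(y)=\inf\{\J(\varrho):\J(\varrho)<+\infty,\ \varrho(\psi_N)=y\}.
\end{align}

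The crux is the exponential approximation estimate
\begin{align}\label{eq:ea}
	\limsup_{N\to\infty}\limsup_{T\to\infty}\frac1T\log\sup_{\varrho\in\mathcal{M}_{\lambda_0,R}}\P_\varrho\Big\{\|L_T(\psi)-L_T(\psi_N)\|_{\mathbb{B}}>\epsilon\Big\}=-\infty.
\end{align}
Fixing, for $\eta>0$, $N$ so large that \eqref{5.5} gives $\|\psi(x)\|_{\mathbb{B}}\le\eta\|\partial_{\xi}x\|_{\L^2}^2$ on $\{\|\A^{\frac12}x\|_{\L^2}>N\}$, one has the pathwise bound $\|L_T(\psi)-L_T(\psi_N)\|_{\mathbb{B}}\le\eta\,\tfrac1T\int_0^T\|\partial_{\xi}u(s)\|_{\L^2}^2\d s$, so \eqref{eq:ea} reduces to a uniform super-exponential bound on the time-averaged dissipation. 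This I would supply from the exponential estimates of Proposition \ref{prop4.10} (using the energy equality of Definition \ref{def2.3} and the constraint $\int e^{\lambda_0\|x\|_{\L^2}^2}\d\varrho\le R$ with $\lambda_0<\tfrac{\pi^2\nu}{2\|\Q\|_{\mathcal{L}(\L^2(\1))}}$), via the Chebyshev bound
\begin{align*}
	\sup_{\varrho\in\mathcal{M}_{\lambda_0,R}}\P_\varrho\Big\{\tfrac1T\!\int_0^T\!\|\partial_{\xi}u(s)\|_{\L^2}^2\d s>\tfrac{\epsilon}{\eta}\Big\}\le e^{-\lambda T\epsilon/\eta}\sup_{\varrho\in\mathcal{M}_{\lambda_0,R}}\E_\varrho\exp\Big(\lambda\!\int_0^T\!\|\partial_{\xi}u(s)\|_{\L^2}^2\d s\Big),
\end{align*}
letting $T\to\infty$ and then $\eta\downarrow0$.

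Finally, the same dissipation bound combined with \eqref{eq:affine} yields exponential tightness of $\{L_T(\psi)\}$ in the norm topology of $\mathbb{B}$; since the weak and norm topologies coincide on norm-compact sets, the weak-topology LDPs for $L_T(\psi_N)$ upgrade to norm-topology LDPs, and \eqref{eq:ea} lets me invoke the theorem on large deviations under exponentially good approximations (Theorem 4.2.23, \cite{DZ}) to conclude the uniform LDP for $\P_\varrho(L_T(\psi)\in\cdot)$ on $\mathbb{B}$. Identification of the rate function as $\I_\psi(y)=\inf\{\J(\varrho):\J(\varrho)<+\infty,\ \varrho(\psi)=y\}$ follows by passing $\varrho(\psi_N)\to\varrho(\psi)$ inside \eqref{eq:ItN} along $\J$-bounded minimising sequences, which is justified by the dominated-convergence consequence of \eqref{eq:affine}. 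I expect the main obstacle to be the exponential approximation \eqref{eq:ea}: one must confirm that Proposition \ref{prop4.10} furnishes an exponential moment of the time-averaged dissipation that is uniform in both $T$ and $\varrho\in\mathcal{M}_{\lambda_0,R}$ with a decay rate dominating the Chebyshev prefactor, a point where the interplay between the admissible range of $\lambda_0$, the trace condition \eqref{13}, and the energy equality must be handled with care.
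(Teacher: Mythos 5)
Your overall architecture is exactly the one the paper intends: the paper's own proof of Corollary \ref{cor4.2} is a two-line deferral to Proposition \ref{prop4.10} and to Gourcy's arguments in \cite{MG2,MG1}, and what you write out --- truncation $\psi_N=\psi\chi_{\{\|\A^{1/2}x\|_{\L^2}\le N\}}$, contraction from the $\tau$-topology LDP of Theorem \ref{thrm5.1}, and an exponentially good approximation whose superexponential estimate comes from the exponential moment \eqref{5.21} of the time-averaged dissipation --- is precisely that argument. Your Chebyshev computation is correct and uniform over $\mathcal{M}_{\lambda_0,R}$: with $\lambda=\lambda_0\nu$ the bound \eqref{5.21} integrated against $\varrho$ gives the factor $\varrho(e^{\lambda_0\|\cdot\|_{\L^2}^2})\le R$, so $\tfrac1T\log\sup_{\varrho}\P_{\varrho}\{\cdot\}\le-\lambda_0\nu\epsilon/\eta+\lambda_0(\Tr(\Q)+C)+\tfrac{\log R}{T}\to-\infty$ as $T\to\infty$ and then $\eta\downarrow0$; this is the ``point to be handled with care'' you flag, and it goes through.

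There is, however, one step that fails as written: the claim that the dissipation bound together with \eqref{eq:affine} yields exponential tightness of $\{L_T(\psi)\}$ in the \emph{norm} topology of $\mathbb{B}$. Those estimates only control $\|L_T(\psi)\|_{\mathbb{B}}$, i.e.\ they confine $L_T(\psi)$ to closed balls, and closed balls are not compact when $\mathbb{B}$ is infinite dimensional; nothing in the hypotheses (measurability plus boundedness on balls of $\H_0^1(\1)$) forces $\psi$ to have precompact range on bounded sets. Consequently your upgrade of the weak-topology LDPs to norm-topology LDPs is not justified by this route. The standard repair --- and what Gourcy's argument actually uses --- is to observe that the detour through the weak topology is unnecessary: a bounded Borel map $\psi_N$ into a separable Banach space is a uniform limit of countably valued simple functions $s=\sum_k b_k\chi_{A_k}$, for each of which $\varrho\mapsto\varrho(s)=\sum_k b_k\varrho(A_k)$ is continuous from $(\M_1(\L^2(\1)),\tau)$ into $(\mathbb{B},\|\cdot\|_{\mathbb{B}})$, and the approximation error $\|\varrho(\psi_N)-\varrho(s)\|_{\mathbb{B}}\le\|\psi_N-s\|_{\sup}$ is uniform in $\varrho$; hence $\varrho\mapsto\varrho(\psi_N)$ is already $\tau$-to-norm continuous. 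The contraction principle then delivers the norm-topology LDP for $L_T(\psi_N)$ with the good rate function \eqref{eq:ItN} directly, exponential tightness of each approximant follows from goodness of its rate function on the Polish space $\mathbb{B}$, and your superexponential estimate \eqref{eq:ea} together with the exponentially-good-approximation theorem and your dominated-convergence identification of $\I_\psi$ completes the proof. With that single step replaced, the proposal is sound and coincides with the paper's intended proof.
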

	\begin{example}Let us provide some examples of the assumptions $\Tr(\Q)<\infty$ and \eqref{13} (cf. \label{rem4.2}\cite{MG2,MG1}). 
		\begin{itemize}	
			\item[(i)]  We know that an $\L^2(\1)$-valued  cylindrical Wiener process $\{\W(t) : 0\leq t\leq T\}$  on $(\Omega,\mathscr{F},\{\mathscr{F}_t\}_{t\geq 0},\mathbb{P})$  can be expressed as  $\W(t)=\sum\limits_{k=1}^\infty  \beta_k(t)e_k,$ where $\beta_k(t), k\in \mathbb{N}$ are independent, one dimensional Brownian motions on the space $(\Omega,\mathscr{F},\{\mathscr{F}_t\}_{t\geq 0},\mathbb{P})$ (cf. \cite{DaZ}). Let us define $\G e_k=\sigma_k e_k$, for $k=1,2,\ldots$, so that $$\G\W(t)=\sum_{k=1}^{\infty}\sigma_k\beta_k(t)e_k.$$ We also know that the eigenvalues of  the Laplacian $\lambda_k \sim k^{2}$. Thus the condition given in \eqref{13} becomes  $$\frac{c}{k}\leq \sigma_k\leq \frac{C}{k^{\frac{1}{2}+\e}},$$ for any two positive constants $c \text{ and } C$ and some $\e>0$.
			\item[(ii)] An another example of noise for which our assumptions hold is $\G:=\A^{-\beta}\F,$ where $\F$ is any linear bounded and invertible operator on $\L^2(\1)$ and $\frac{1}{4}<\beta<\frac{1}{2}$. 
		\end{itemize}
	\end{example}
	\begin{remark}
		1. The class \eqref{5.1} of permissible initial distributions for the uniform LDP is sufficiently rich. For example, choosing $R$ large enough, it accommodates all the Dirac probability measure $\delta_{x}$ with $x$ in any ball of $\L^2(\1)$. 
		
		2. The LDP w.r.t. the topology $\tau$ is stronger than that w.r.t. the usual weak convergence topology as in Donsker-Varadhan \cite{MDD}.
		
		3. The assumption \eqref{13} plays an important role in Theorem \ref{thrm5.1}. If the noise acts only a finite number of modes (that is, $\sigma_k=0$ after a finite number $N$), in the first part of Example \ref{rem4.2} as in Kolmogorov's turbulence theory, we believe that the LDP w.r.t. the $\tau$-topology is false.
	\end{remark}
	
	\iffalse 
	\subsection{Solutions of the equation and their properties}\vspace{.4cm}
	
	First we will specify the different notion of solution. In general, we can define the solution of stochastic differential equations in infinite dimension spaces which gives the rigorous meaning i.e., the first one is the variational \cite{BLR} and the second through semigroup theory \cite{DaZ}. As in deterministic evolution equations, we can define the solution in two ways \emph{strong} and \emph{mild} solution. 
	\fi

	The existence  of mild solution of our problem \eqref{2.2} is proved in Theorem \ref{thm2.7} and strong solution is established in Theorem \ref{thm2.6}.  We have already defined the transition semigroup corresponding to the solution of \eqref{2.2} by 
	\begin{align*} 
		\mathrm{P}_t\psi(x)=\E[\psi(u(t,x))]=\E^{x}[\psi(u(t))],\ \text{ for all }\ \psi\in\mathrm{B}_b(\L^2(\1)).
	\end{align*}
	We have already proved that the transition semigroup $\mathrm{P}_t,\; t\geq 0$ is irreducible, satisfies the strong Feller property (Propositions \ref{prop4.1}, \ref{prop4.3}), and it admits a unique invariant measure $\mu$ (Theorems \ref{thEx1}, \ref{thm4.4}). 	
	\iffalse 
	With the help of sections \ref{sec3} and \ref{sec4} we can state a Lemma as,
	\begin{lemma}\label{lem4.4}\noindent
		\begin{itemize}
			\item[(i)] 
			The transition semigroup $\mathrm{P}_t,\; t\geq 0$ associated with the solution $u(\cdot)$ of \eqref{2.2} satisfies the strong Feller property.
			\item[(ii)] $\mathrm{P}_t,\;t>0$, is an irreducible semigroup, i.e., we can go from one state to another state within an open ball $B$ contained in $\L^2(\1)$ with a non-zero probability.
			\item[(iii)] In particular, the transition semigroup $\mathrm{P}_t,\; t\geq 0$ associated with the solution $u(\cdot)$ of \eqref{2.2} admits a unique invariant measure $\mu$, which charges all non-empty open subsets of $\L^2(\1)$.
		\end{itemize}
	\end{lemma}
	\fi 
	\subsection{General results on LDP}
	In this section, we provide some necessary notations, basic definitions and give some results from \cite{LW2} on large deviations for the Markov process.
	Consider the $\L^2(\1)$-valued continuous Markov process, $$(\Omega,\{\mathscr{F}_{t}\}_{t\geq 0},\mathscr{F},\{u_{t}\}_{t\geq 0}, \{\mathbb{P}_{x}\}_{x \in \L^2(\1)}),$$ 
	whose semigroup of Markov transition kernel is denoted by $\{\P_{t}(x,\d \y)\}_{t\geq 0}$, where 
	\begin{itemize}
		\item  $\Omega = \C(\R^{+};\L^2(\1))$ is the space of continuous functions from $\R^{+}$ to $\L^2(\1)$ equipped with the compact convergence topology,
		\item  the natural filtration is $\mathscr{F}_{t} = \sigma\{u(s): 0 \leq s \leq t\}$ for any $t\geq 0$ and $\mathscr{F} = \sigma\{u(s): 0 \leq s\}$.
		\item $\P_{x}\{u(0)=x\}=1$,
	\end{itemize}
	As usual, we denote the law of Markov process with the initial state $x \in \L^2(\1) $ by $ \mathbb{P}_{x}$, and for any initial measure $\varrho $ on $\L^2(\1)$, we define $\mathbb{P}_{\varrho}(\cdot)=\int_{\L^2(\1)}\mathbb{P}_{x}(\cdot)\varrho (\d x)$. 
	The empirical measure of level-3 is given by $$ R_{t} := \frac{1}{t}\int_{0}^{t} \delta_{\theta_{s}u} \d s, $$ where $(\theta_{s}u)(t)= u(s+t),$ for all $t,s \geq 0 $ are the shifts on $\Omega$. Therefore, $R_{t}$ is a random element of $\M_{1}(\Omega)$, the space of probability measures on $\Omega$.	The level-3 entropy functional of Donsker-Varadhan $H:\M_{1}(\Omega) \to [0,+\infty]$ is defined by 
	\begin{equation}\label{5.6}
		\begin{aligned}
			H(Q):= \left\{\begin{array}{cl}
				\mathbb{E}^{\bar{Q}}h_{\mathscr{F}^{0}_{1}} \left(\bar{Q}_{\omega(-\infty,0]};\mathbb{P}_{\omega(0)}\right), &  \text{ if } Q \in \M^{s}_{1}(\Omega), \\ 
				+\infty , & \text{ otherwise},
			\end{array}\right. \end{aligned}
	\end{equation}
	where \begin{itemize}
		\item $\M^{s}_{1}(\Omega)$ is the subspace of $\M_{1}(\Omega)$, whose element are moreover stationary; 
		\item $\bar{Q}$ is the unique stationary extension of $Q \in \M^{s}_{1}(\Omega)$ to $\bar{\Omega} := \C(\R;\L^2(\1))$;  $\mathscr{F}^{s}_{t} = \sigma \{u(r): s \leq r \leq t \}$, for all $\ s,  t \in \R, \ s \leq t$;
		\item $\bar{Q}_{\omega(-\infty,t]}$ is the regular conditional distribution of $\bar{Q}$ knowing $\mathscr{F}^{-\infty}_{t}$;
		\item   $h_{\mathcal{G}}(\varrho,\mu)$ is the usual relative entropy or Kullback information of $\varrho $ w.r.t. $\mu$ restricted to the $\sigma $-field $\mathcal{G} $, is given by 
		\begin{equation*}
			\begin{aligned}h_{\mathcal{G}}(\varrho;\mu):=\left\{ \begin{array}{cl}		\int \frac{\d\varrho}{\d\mu}\big|_{\mathcal{G}} \log \big(\frac{\d\varrho}{\d\mu}\big|_{\mathcal{G}}\big)\d\mu, & \text{ if } \varrho \ll \mu \text{ on } \mathcal{G}, \\
					+\infty, & \text{ otherwise}.
				\end{array}\right. \end{aligned}
		\end{equation*}
	\end{itemize}
	The level-2 entropy functional $\J: \M_{1}(\L^2(\1)) \rightarrow [0,\infty],$ which governs the LDP in our main Theorem \ref{thrm5.1} is 
	\begin{align}\label{5.7}
		\J(\mu) = \inf \{H(Q): Q \in \M^{s}_{1}(\Omega) \text{ and }  Q_{0}= \mu \}, \quad \mu \in \M_{1}(\L^2(\1)),\end{align}
	where $ Q_{0}(\cdot) =Q(u(0) \in \cdot)$ is the marginal law at $t=0$.
	As introduced in \cite{LW2}, we define the restriction of the Donsker-Varadhan entropy to the $\mu$ component, by 
	\begin{equation*}
		\begin{aligned}
			H_{\mu}(Q):= \left\{ \begin{array}{cl}H(Q), &\text{ if } Q_0 \ll \mu,
				\\ \infty , &\text{ otherwise},
			\end{array}\right.
		\end{aligned}
	\end{equation*} and for level-2 entropy functional 
	\begin{equation*}
		\begin{aligned}
			\J_{\mu}(\varrho):= \left\{ \begin{array}{cl}\J(\varrho), & \text{ if }\varrho\ll \mu,
				\\ \infty , & \text{ otherwise },
			\end{array}\right.
		\end{aligned}
	\end{equation*}
	A proof of the following result is available as Lemma 3.1, \cite{MG2}, and hence we omit it here.
	\begin{lemma}\label{lem4.5} 
		For our system $\J(\varrho)<\infty \Rightarrow \varrho \ll \mu $. Moreover, $\J=\J_{\mu}$ on $\M_1(\L^2(\1))$ and $[\J=0]=\{\mu\}$.
	\end{lemma}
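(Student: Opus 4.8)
The plan is to establish the three assertions in turn, observing that the identity $\J=\J_\mu$ will follow formally from the first implication. I would lean on two structural inputs. The first is the consequence of Doob's theorem already secured in this paper through the strong Feller property (Proposition \ref{prop4.3}) and the irreducibility (Proposition \ref{prop4.1}): for every $t>0$ and every $x\in\L^2(\1)$ the transition probabilities $\mathrm{P}(t,x,\cdot)$ are mutually equivalent and equivalent to the unique invariant measure $\mu$ of Theorem \ref{thm4.4}. The second is the pair of standard facts about the level-3 Donsker--Varadhan entropy \eqref{5.6}: that $H$ is a good rate function (lower semicontinuous with compact sublevel sets in $\M^{s}_{1}(\Omega)$), and that $H(Q)=0$ holds if and only if $Q=\P_\nu$, the law of the stationary Markov process associated with some invariant probability measure $\nu$.

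For the implication $\J(\varrho)<+\infty\Rightarrow\varrho\ll\mu$, suppose $\J(\varrho)<+\infty$. By \eqref{5.7} there is $Q\in\M^{s}_{1}(\Omega)$ with $Q_0=\varrho$ and $H(Q)<+\infty$. I would unpack \eqref{5.6}: finiteness of the conditional Kullback information $h_{\mathscr{F}^{0}_{1}}(\bar Q_{\omega(-\infty,0]};\P_{\omega(0)})$ forces, after integrating against $\bar Q$ and using the chain rule for relative entropy together with the fact that $h_{\mathcal G}<+\infty$ entails absolute continuity on $\mathcal G$, the two-time joint law $Q_{0,1}(\d x,\d y)$ to be absolutely continuous with respect to $\varrho(\d x)\,\mathrm{P}(1,x,\d y)$. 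Passing to the time-$1$ marginal gives $Q_1\ll\varrho\mathrm{P}_1$, where $\varrho\mathrm{P}_1:=\int_{\L^2(\1)}\varrho(\d x)\,\mathrm{P}(1,x,\cdot)$. By stationarity of $Q$ one has $Q_1=Q_0=\varrho$, while the equivalence $\mathrm{P}(1,x,\cdot)\sim\mu$ gives $\varrho\mathrm{P}_1\ll\mu$. Chaining these, $\varrho=Q_1\ll\varrho\mathrm{P}_1\ll\mu$, which is the desired absolute continuity. The equality $\J=\J_\mu$ on $\M_{1}(\L^2(\1))$ is then immediate: on $\{\J<+\infty\}$ we have just shown $\varrho\ll\mu$, so $\J_\mu(\varrho)=\J(\varrho)$, whereas on $\{\J=+\infty\}$ both functionals equal $+\infty$ irrespective of whether $\varrho\ll\mu$.

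For $[\J=0]=\{\mu\}$ I would first verify $\J(\mu)=0$ by taking $Q=\P_\mu$, the law of the stationary Markov process with marginal $\mu$; this $Q$ lies in $\M^{s}_{1}(\Omega)$ with $Q_0=\mu$ and $H(\P_\mu)=0$, since by the Markov property its conditional future given the past coincides $\P_\mu$-a.s. with the reference kernel $\P_{\omega(0)}$, whence $0\le\J(\mu)\le H(\P_\mu)=0$. Conversely, if $\J(\varrho)=0$, choose $Q_n\in\M^{s}_{1}(\Omega)$ with $Q_{n,0}=\varrho$ and $H(Q_n)\to0$; goodness of $H$ makes $\{Q_n\}$ eventually contained in a compact sublevel set, so a subsequence converges to some $Q\in\M^{s}_{1}(\Omega)$ with $Q_0=\varrho$ (the time-$0$ marginal being continuous) and, by lower semicontinuity, $H(Q)\le\liminf_n H(Q_n)=0$. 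Hence $Q=\P_\nu$ for some invariant $\nu$, and uniqueness of the invariant measure (Theorem \ref{thm4.4}) yields $\varrho=Q_0=\nu=\mu$.

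I expect the genuinely delicate step to be the passage from finiteness of the level-3 conditional entropy in \eqref{5.6} to the absolute continuity $Q_{0,1}\ll\varrho(\d x)\,\mathrm{P}(1,x,\d y)$ of the two-time marginal, since this requires conditioning on the full past $\mathscr{F}^{-\infty}_{t}$ and disintegrating the stationary extension $\bar Q$. This is precisely the content treated in Lemma 3.1 of \cite{MG2}, on which the present statement is modelled, and I would import that argument verbatim. The remaining ingredients, namely the equivalence $\mathrm{P}(1,x,\cdot)\sim\mu$ furnished by Doob's theorem and the characterisation $H(Q)=0\Leftrightarrow Q=\P_\nu$, are routine once the strong Feller and irreducibility properties established in Section \ref{sec4} are in hand.
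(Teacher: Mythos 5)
The paper offers no proof of this lemma at all --- it simply points to Lemma 3.1 of \cite{MG2} --- so your proposal is being measured against the argument you are reconstructing. Your treatment of the first claim is correct and is exactly the standard route: $H(Q)<\infty$ forces $\bar{Q}_{\omega(-\infty,0]}\ll\P_{\omega(0)}$ on $\mathscr{F}^{0}_{1}$ almost surely, hence $Q_1\ll\varrho\mathrm{P}_1\ll\mu$ by Doob's theorem, while stationarity gives $Q_1=Q_0=\varrho$. The identity $\J=\J_{\mu}$ is then immediate, and $\J(\mu)\le H(\P_{\mu})=0$ is also fine.

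The gap is in the converse direction of $[\J=0]=\{\mu\}$. You extract a minimizing sequence $Q_n$ and pass to a limit using ``goodness of $H$'' (compact sublevel sets), which you present as a standard structural fact about the level-3 Donsker--Varadhan entropy. It is not: on a non-compact state space such as $\L^2(\1)$, inf-compactness of the rate function is a genuinely system-dependent property --- it is precisely assertion (i) of Theorem \ref{thrm5.1}, and in this paper it is obtained only from the hyper-exponential recurrence via Lemma \ref{lem5.1}. (Note also that fixing the marginals $Q_{n,0}=\varrho$ does not by itself make the sequence of path laws tight in $\M_1(\C(\R^{+};\L^2(\1)))$, since tightness there requires control of the modulus of continuity.) Invoking goodness here is at best an unacknowledged forward reference to a later result. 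The step can be repaired with no compactness at all: restricting the conditional entropy in \eqref{5.6} to $\sigma(\omega(0),\omega(1))\subset\mathscr{F}^{0}_{1}$ and using joint convexity and then marginalization of relative entropy gives $H(Q)\ge h\big(Q_{0,1}\,\big|\,\varrho(\d x)\mathrm{P}(1,x,\d y)\big)\ge h\big(\varrho\,\big|\,\varrho\mathrm{P}_1\big)$ for every admissible $Q$, so $\J(\varrho)=0$ forces $\varrho=\varrho\mathrm{P}_1$; the time-one skeleton is itself strong Feller and irreducible by Propositions \ref{prop4.3} and \ref{prop4.1}, so Doob's theorem identifies $\varrho$ with $\mu$. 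With that substitution your argument is complete.
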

	\subsection{Exponential estimates for the solution} In this subsection, we prove a crucial exponential estimate for the solution $u(\cdot)$ to the SGBH equation, which will be helpful to establish the LDP results.
	We need the following result to discuss about the proof of Proposition \ref{prop4.10}. First recall the finite dimensional Galerkin approximation, that is, 
	\begin{equation}\label{5.10}
		\left\{
		\begin{aligned}
			\d  u_n(t)&=\{-\nu\A u_n(t)-\alpha\B_n (u_n(t))+\beta \c_n(u_n(t))\}\d t + \G_n \d \W(t), \; t\in(0,T), \\
			u_n(0)&= x_n:=\Pi_nx,
		\end{aligned}
		\right.
	\end{equation}
	and it satisfies the following a-priori energy estimate: 
	\begin{align}\label{5.11}\nonumber
		&\E\bigg[\sup_{t\in[0,T]}\|u_n(t)\|_{\L^2}^2+\nu\int_{0}^{T}\|\partial_{\xi} u_n(s)\|_{\L^2}^2\d s +\beta\int_{0}^{T} \|u_{n}(s)\|_{\L^{2(\delta+1)}}^{2(\delta+1)}\d s \bigg] \\
		&\leq C(\|x\|_{\L^2}^2+\Tr(\G\G^*)T).
	\end{align}
	\begin{lemma}\label{lem5.12}
		Let $u_n(\cdot) \text{ and } u(\cdot)$ be the solutions of the systems \eqref{5.10} and \eqref{2.2}, respectively. Then, we have 
		\begin{align}\label{5.12}\nonumber
			&\|u_n(t)\|_{\L^2}^2+2\nu\int_{0}^{t}\|\partial_{\xi} u_n(s)\|_{\L^2}^2\d s +\beta\int_{0}^{t} \|u_{n}(s)\|_{\L^{2(\delta+1)}}^{2(\delta+1)}\d s \\&\xrightarrow{a.s.} \|u(t)\|_{\L^2}^2+2\nu\int_{0}^{t}\|\partial_{\xi} u(s)\|_{\L^2}^2\d s +\beta\int_{0}^{t} \|u(s)\|_{\L^{2(\delta+1)}}^{2(\delta+1)}\d s,
		\end{align} for all $t\in[0,T]$.
	\end{lemma}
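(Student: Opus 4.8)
The plan is to realise both energy functionals as the left-hand sides of genuine It\^o energy equalities, and then to argue that the (a priori only weakly lower semicontinuous) dissipation and reaction norms must in fact converge, precisely because their prescribed linear combination does.

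First I would apply the finite-dimensional It\^o formula to $\|u_n(\cdot)\|_{\L^2}^2$ for the Galerkin system \eqref{5.10}. Since $\Pi_n u_n=u_n$, we have $(\B_n(u_n),u_n)=b(u_n,u_n,u_n)=0$ by \eqref{2.1} and $(\c_n(u_n),u_n)=(\c(u_n),u_n)$, so the identity for $(\c(u),u)$ recorded in Subsection \ref{subsec2.3} gives the exact balance
\begin{align*}
	G_n(t):={}&\|u_n(t)\|_{\L^2}^2+2\nu\int_0^t\|\partial_\xi u_n(s)\|_{\L^2}^2\d s+2\beta\int_0^t\|u_n(s)\|_{\L^{2(\delta+1)}}^{2(\delta+1)}\d s\\
	={}&\|x_n\|_{\L^2}^2+\Tr(\G_n\G_n^*)t+2\beta(1+\gamma)\int_0^t(u_n^{\delta+1}(s),u_n(s))\d s\\
	&-2\beta\gamma\int_0^t\|u_n(s)\|_{\L^2}^2\d s+2\int_0^t(\G_n\d\W(s),u_n(s)),
\end{align*}
while the corresponding identity for $u$ is exactly the energy equality in Definition \ref{def2.3}, whose left-hand side I call $G(t)$.

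Second, I would show that the right-hand side above converges almost surely (uniformly in $t\in[0,T]$, along the Galerkin subsequence) to that of the limiting equation, so that $G_n(t)\to G(t)$. Here $\|x_n\|_{\L^2}^2\to\|x\|_{\L^2}^2$ since $x_n=\Pi_n x$, and $\Tr(\G_n\G_n^*)\to\Tr(\G\G^*)$ under \eqref{13}. The strong convergence $u_n\to u$ in $\L^2(0,T;\L^2(\1))$ (from the construction in Theorem \ref{thm2.4}) controls $\int_0^t\|u_n\|_{\L^2}^2\d s$; passing to a further subsequence gives $u_n\to u$ a.e.\ on $(0,T)\times(0,1)$, and since $\{u_n^{\delta+2}\}$ is bounded in $\L^{\frac{2(\delta+1)}{\delta+2}}$ with $\frac{2(\delta+1)}{\delta+2}>1$ by the pathwise bound \eqref{214}, it is uniformly integrable, so Vitali's theorem yields $\int_0^t(u_n^{\delta+1},u_n)\d s\to\int_0^t(u^{\delta+1},u)\d s$. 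The stochastic term is the technical core: splitting $(\G_n\d\W,u_n)-(\G\d\W,u)=(\G_n\d\W,u_n-u)+((\G_n-\G)\d\W,u)$ and using the It\^o isometry, the first contribution is bounded by $\|\G\|_{\mathcal{L}(\L^2)}^2\,\E\int_0^T\|u_n-u\|_{\L^2}^2\d s\to0$ (dominated convergence via \eqref{5.11}), while the second tends to zero because $\G_n^*\to\G^*$ strongly with dominating bound $2\|\G\|_{\mathcal{L}(\L^2)}\|u\|_{\L^2}$; Doob's inequality then upgrades this $\L^2(\Omega)$-convergence to a.s.\ uniform-in-$t$ convergence along a subsequence.

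Finally I would run the standard argument that convergence of the total energy forces convergence of each nonnegative, weakly lower semicontinuous component. Writing $u_n=v_n+z_n$, the deterministic part $v_n$ converges in $\C([0,T];\H^{-1}(\1))$ by Aubin-Lions (with the time-derivative bound from Step (4) of Theorem \ref{thm2.4}), while $z_n\to z$ in $\C([0,T];\L^2(\1))$; being bounded in $\L^2(\1)$, this gives $u_n(t)\rightharpoonup u(t)$ in $\L^2(\1)$ for every $t$, whence $\|u(t)\|_{\L^2}^2\le\liminf_n\|u_n(t)\|_{\L^2}^2$, and the weak convergences in $\L^2(0,t;\H_0^1(\1))$ and $\L^{2(\delta+1)}(0,t;\L^{2(\delta+1)}(\1))$ give the analogous lower bounds for the two integral terms. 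Adding these with the weights $1,2\nu,2\beta$ yields $G(t)\le\liminf_n G_n(t)=G(t)$, so every inequality is an equality; since $\liminf a_n+\liminf b_n+\liminf c_n=a+b+c$ together with $\liminf a_n\ge a$, $\liminf b_n\ge b$, $\liminf c_n\ge c$ forces $a_n\to a$, $b_n\to b$, $c_n\to c$, each component converges separately, and recombining them with the weights $1,2\nu,\beta$ gives \eqref{5.12}. The main obstacle is exactly this last step: the top-order norm $\int_0^t\|u_n\|_{\L^{2(\delta+1)}}^{2(\delta+1)}\d s$ cannot be passed to the limit by any soft (uniform integrability) argument, and it is only the rigidity of the energy identity that rescues its convergence.
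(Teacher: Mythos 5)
Your argument is correct in outline but takes a genuinely different route from the paper's. The paper does not work with the energy identities pathwise: it only writes down the two expectation bounds \eqref{5.16}--\eqref{5.17} coming from the a priori estimate, concludes that the expectations of the two energy functionals converge to each other, and then asserts that an a.s.\ convergent subsequence can be extracted, invoking uniqueness to pass to the whole sequence. You instead work $\omega$-by-$\omega$: you realise both functionals as exact It\^o energy balances, pass to the limit in every term of the right-hand side, and then exploit the rigidity of a sum of nonnegative, weakly lower semicontinuous quantities whose weighted total converges to the weighted total of the candidate limits. Your route buys robustness at the price of length: the paper's step \eqref{5.18} is obtained by ``subtracting'' two one-sided inequalities, and the passage from convergence of $\E[X_n]$ to $\E[X]$ to a.s.\ convergence of $X_n$ along a subsequence requires $\E|X_n-X|\to 0$ rather than mere convergence of the expectations, so your pathwise argument supplies exactly the justification that the shorter route leaves implicit. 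Two points in your write-up deserve tightening. First, to conclude $\E\int_0^T\|u_n-u\|_{\L^2}^2\,\d s\to 0$ you need more than the uniform $\mathrm{L}^1(\Omega)$ bound \eqref{5.11}: the natural dominating quantity $\sup_t\|u_n\|_{\L^2}^2$ depends on $n$, so you should either produce a uniform higher-moment bound (standard via Burkholder--Davis--Gundy) or invoke the uniform exponential estimate of Proposition \ref{prop4.10}, which is itself proved first at the Galerkin level, to get uniform integrability and then apply Vitali rather than dominated convergence. Second, the scheme \eqref{5.10} projects the nonlinear terms through $\Pi_n$, whereas the pathwise scheme \eqref{2.7} whose Aubin--Lions compactness you invoke does not, so the identification $u_n=v_n+z_n$ should be stated for correctly matched approximations. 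Both are routine repairs and do not affect the strategy.
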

	\begin{proof}
		Using the Banach-Alaoglu theorem in \eqref{5.11}, we can extract a subsequence $\{u_n\}$ (for simplicity still denoting by $\{u_n\}$) such that 
		\begin{equation}\label{5.13}
			\left\{
			\begin{aligned}
				u_n&\xrightarrow{w^*}u\ \text{ in }\mathrm{L}^2(\Omega;\mathrm{L}^{\infty}(0,T;\L^2(\1))),\\
				u_n&\xrightarrow{w}u\ \text{ in }\mathrm{L}^2(\Omega;\mathrm{L}^{2}(0,T;\H_0^1(\1))),\\
				u_n&\xrightarrow{w}u\ \text{ in }\L^{2(\delta+1)}(\Omega;\L^{2(\delta+1)}(0,T;\L^{2(\delta+1)}(\1))),
			\end{aligned}\right.
		\end{equation}where $u(\cdot)$ denotes the strong solution of the system \eqref{2.2}. Since the solution of \eqref{5.10} is unique,  the whole sequence converges to $u(\cdot)$.
		A calculation similar to \eqref{Expectation} yields 
		\begin{align}\label{5.16}\nonumber
			&\E\bigg[\|u_n(t)\|_{\L^2}^2+2\nu\int_{0}^{t}\|\partial_{\xi} u_n(s)\|_{\L^2}^2\d s +\frac{\beta}{2}\int_{0}^{t} \|u_{n}(s)\|_{\L^{2(\delta+1)}}^{2(\delta+1)}\d s\bigg] \\&\leq \|x_{n}\|^{2}_{\L^2}  +(\Tr(\G_{n}\G^{*}_{n})+C(\beta,\delta) )t,
		\end{align}where the constant $C(\beta,\delta)=\big(\frac{2}{\beta(\delta+1)}\big)^{\frac{1}{\delta}}\frac{\delta}{\delta+1}$.
		Similarly, for the strong solution $u(\cdot)$ of system \eqref{2.2}, we get 
		\begin{align}\label{5.17}\nonumber
			&\E\bigg[\|u(t)\|_{\L^2}^2+2\nu\int_{0}^{t}\|\partial_{\xi} u(s)\|_{\L^2}^2\d s +\frac{\beta}{2}\int_{0}^{t} \|u(s)\|_{\L^{2(\delta+1)}}^{2(\delta+1)}\d s\bigg] \\&\leq \|x\|^{2}_{\L^2}  +(\Tr(\G\G^{*})+C(\beta,\delta))t ,
		\end{align}for all $t\in[0,T]$. From \eqref{5.16} and \eqref{5.17}, we obtain 
		\begin{align}\label{5.18}\nonumber
			\bigg|&\E\bigg[\|u_n(t)\|_{\L^2}^2+2\nu\int_{0}^{t}\|\partial_{\xi} u_n(s)\|_{\L^2}^2\d s +\frac{\beta}{2}\int_{0}^{t} \|u_{n}(s)\|_{\L^{2(\delta+1)}}^{2(\delta+1)}\d s\bigg] \nonumber\\&\quad-\E\bigg[\|u_n(t)\|_{\L^2}^2+2\nu\int_{0}^{t}\|\partial_{\xi} u_n(s)\|_{\L^2}^2\d s +\frac{\beta}{2}\int_{0}^{t} \|u_{n}(s)\|_{\L^{2(\delta+1)}}^{2(\delta+1)}\d s\bigg]\bigg|\nonumber\\& \leq \big|\|x_{n}\|^{2}_{\L^2}-\|x\|^{2}_{\L^2}+\Tr(\G_{n}\G^{*}_{n})t-\Tr(\G\G^{*})t\big| \nonumber\\& \leq \big|\|x_{n}\|_{\L^2}-\|x\|_{\L^2}\big|\big|\|x_{n}\|_{\L^2}+\|x\|_{\L^2}\big|+\big|\Tr(\G_{n}\G^{*}_{n})t-\Tr(\G\G^{*})t\big|.
		\end{align}It is easy to deduce that 
		\begin{align*}
			\big|\|x_{n}\|_{\L^2}-\|x\|_{\L^2}\big| \leq \|x_n-x\|_{\L^2}=\bigg(\sum_{j=n+1}^{\infty}|(x,e_j)|^2\bigg)^{\frac{1}{2}}\to 0 \text{ as } n\to \infty,
		\end{align*} and 
		\begin{align*}
			\Tr(\G_n\G_n^*) -\Tr(\G\G^*) = \Tr (\Pi_n\G\G^*-\G\G^*)\leq \|\Pi_n-\I\|_{\mathcal{L}(\L^2(\1))}\Tr(\G\G^*) \to 0 \text{ as } n \to \infty.
		\end{align*}Passing $n\to\infty$ in \eqref{5.18} by using the above convergences, we find 
		\begin{align*}
			&\E\bigg[\|u_n(t)\|_{\L^2}^2+2\nu\int_{0}^{t}\|\partial_{\xi} u_n(s)\|_{\L^2}^2\d s +\frac{\beta}{2}\int_{0}^{t} \|u_{n}(s)\|_{\L^{2(\delta+1)}}^{2(\delta+1)}\d s\bigg]\\& \to \E\bigg[\|u(t)\|_{\L^2}^2+2\nu\int_{0}^{t}\|\partial_{\xi} u(s)\|_{\L^2}^2\d s +\frac{\beta}{2}\int_{0}^{t} \|u(s)\|_{\L^{2(\delta+1)}}^{2(\delta+1)}\d s\bigg],
		\end{align*}as $n\to\infty$. From the above convergence, one can extract an a.s. convergent  subsequence $\{u_{n_k}\}$ of $\{u_n\}$, that is, 
		\begin{align*}
			&\|u_{n_k}(t)\|_{\L^2}^2+2\nu\int_{0}^{t}\|\partial_{\xi} u_{n_k}(s)\|_{\L^2}^2\d s +\frac{\beta}{2}\int_{0}^{t} \|u_{n_k}(s)\|_{\L^{2(\delta+1)}}^{2(\delta+1)}\d s\\& \xrightarrow{a.s.} \|u(t)\|_{\L^2}^2+2\nu\int_{0}^{t}\|\partial_{\xi} u(s)\|_{\L^2}^2\d s +\frac{\beta}{2}\int_{0}^{t} \|u(s)\|_{\L^{2(\delta+1)}}^{2(\delta+1)}\d s \text{ as } k\to\infty,
		\end{align*} for all $t\in[0,T]$. The  above convergence holds for the original sequence   $\{u_n(\cdot)\}$, since $u_n(\cdot),\; u(\cdot)$ are the unique solutions of \eqref{5.10} and \eqref{2.2}, respectively. 
	\end{proof}
	\begin{proposition}\label{prop4.10}
		For any $0<\lambda_0 < \frac{\pi^2\nu}{2\|\Q\|_{\mathcal{L}(\L^2(\1))}}$, where $\|Q\|_{\mathcal{L}(\L^2(\1))}$ is the norm of $\Q$ as an operator in $\L^2(\1)$ and for any $x\in\L^2(\1)$, the process $u(\cdot)$ satisfies the following estimates:
		\begin{align}\label{5.19}\nonumber
			\E^{\x}\bigg\{\exp\bigg(\lambda_0\|u(t)\|_{\L^2}^2&+\lambda_0\nu \int_{0}^{t}\|\partial_{\xi}u(s)\|_{\L^2}^2\d s +\frac{\lambda_0\beta}{2}\int_{0}^{t}\|u(s)\|_{\L^{2(\delta+1)}}^{2(\delta+1)}\d s\bigg) \bigg\} \\& \leq e^{\lambda_0\|x_0\|_{\L^2}^2+\lambda_0t(\Tr (\Q)+C)},
		\end{align}where the constant $C=\big(\frac{2}{\beta(\delta+1)}\big)^{\frac{1}{\delta}}\frac{\delta}{\delta+1}$. In particular, the following estimates hold
		\begin{align}\label{5.20}
			\E^{x}\big\{\exp\big(\lambda_0\|u(t)\|_{\L^2}^2\big)\big\}&\leq e^{\lambda_0\|x_0\|_{\L^2}^2+\lambda_0t(\Tr (\Q)+C)},\\
			\label{5.21}
			\E^{x}\bigg\{\exp\bigg(\lambda_0\nu \int_{0}^{t}\|\partial_{\xi}u(s)\|_{\L^2}^2\d s \bigg)\bigg\} &\leq e^{\lambda_0\|x_0\|_{\L^2}^2+\lambda_0t(\Tr (\Q)+C)},\\ \label{5.22}
			\E^{x}\bigg\{\exp\bigg(\lambda_0\beta\int_{0}^{t}\|u(s)\|_{\L^{2(\delta+1)}}^{2(\delta+1)}\d s \bigg)\bigg\}&\leq e^{\lambda_0\|x_0\|_{\L^2}^2+\lambda_0t(\Tr (\Q)+C)}.
		\end{align}
	\end{proposition}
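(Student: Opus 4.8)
The plan is to prove the estimate first at the level of the finite-dimensional Galerkin approximation \eqref{5.10}, where the It\^o formula applies rigorously to smooth functions of $\|u_n\|_{\L^2}^2$, and then to pass to the limit with the help of Lemma \ref{lem5.12}. Proceeding exactly as in the derivation of \eqref{5.14}--\eqref{5.15} (using \eqref{2.1} to kill the convective term and Young's inequality to absorb the reaction term), the Galerkin solution satisfies the differential inequality
\begin{align*}
	\d\|u_n(t)\|_{\L^2}^2 &+ 2\nu\|\partial_{\xi} u_n(t)\|_{\L^2}^2\,\d t + \frac{\beta}{2}\|u_n(t)\|_{\L^{2(\delta+1)}}^{2(\delta+1)}\,\d t \\
	&\leq (\Tr(\G_n\G_n^*)+C)\,\d t + 2(\G_n\d\W(t),u_n(t)),
\end{align*}
with $C$ as in the statement. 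I then introduce
\begin{align*}
	g_n(t) := \lambda_0\|u_n(t)\|_{\L^2}^2 + \lambda_0\nu\int_0^t\|\partial_{\xi} u_n(s)\|_{\L^2}^2\,\d s + \frac{\lambda_0\beta}{2}\int_0^t\|u_n(s)\|_{\L^{2(\delta+1)}}^{2(\delta+1)}\,\d s,
\end{align*}
so that $\d g_n \leq \lambda_0\big(-\nu\|\partial_{\xi} u_n\|_{\L^2}^2 + \Tr(\G_n\G_n^*) + C\big)\d t + 2\lambda_0(\G_n\d\W,u_n)$; note that exactly half of the dissipation $2\nu\|\partial_{\xi} u_n\|_{\L^2}^2$ survives in this drift, which is the crucial gain.

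Next I would apply the It\^o formula to $e^{g_n(t)}$. The It\^o correction produces the quadratic-variation term $2\lambda_0^2\|\G_n^* u_n\|_{\L^2}^2\,\d t$, and the point is to absorb it into the surviving dissipation. Using $\|\G_n^*u_n\|_{\L^2}^2 = (\G_n\G_n^* u_n,u_n)\leq \|\Q\|_{\mathcal{L}(\L^2(\1))}\|u_n\|_{\L^2}^2$ together with the Poincar\'e inequality $\pi^2\|u_n\|_{\L^2}^2\leq\|\partial_{\xi} u_n\|_{\L^2}^2$, one gets
\begin{align*}
	2\lambda_0^2\|\G_n^*u_n\|_{\L^2}^2 - \lambda_0\nu\|\partial_{\xi} u_n\|_{\L^2}^2 \leq \lambda_0\Big(\tfrac{2\lambda_0\|\Q\|_{\mathcal{L}(\L^2(\1))}}{\pi^2} - \nu\Big)\|\partial_{\xi} u_n\|_{\L^2}^2 \leq 0,
\end{align*}
precisely because $\lambda_0 < \frac{\pi^2\nu}{2\|\Q\|_{\mathcal{L}(\L^2(\1))}}$. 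Hence the process $X_n(t) := \exp\big(g_n(t) - \lambda_0(\Tr(\G_n\G_n^*)+C)t\big)$ has nonpositive drift, i.e.\ $\d X_n \leq X_n\cdot 2\lambda_0(\G_n\d\W,u_n)$.

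Finally, I would turn this into an expectation bound by localization: for the stopping times $\tau_M = \inf\{t:\|u_n(t)\|_{\L^2}\geq M\}$ the stochastic integral is a genuine martingale, so $X_n(t\wedge\tau_M)$ is a nonnegative supermartingale with $\E^x[X_n(t\wedge\tau_M)] \leq X_n(0) = e^{\lambda_0\|x_n\|_{\L^2}^2}$; letting $M\to\infty$ and using Fatou's lemma yields the Galerkin version of \eqref{5.19}. To remove the approximation I use that $\|x_n\|_{\L^2}\to\|x\|_{\L^2}$ and $\Tr(\G_n\G_n^*)\to\Tr(\Q)$ (as established inside the proof of Lemma \ref{lem5.12}), while Lemma \ref{lem5.12} gives $g_n(t)\to g(t)$ almost surely; one more application of Fatou's lemma gives $\E^x[e^{g(t)}]\leq\liminf_n\E^x[e^{g_n(t)}]\leq e^{\lambda_0\|x\|_{\L^2}^2+\lambda_0 t(\Tr(\Q)+C)}$, which is \eqref{5.19}. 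The three special cases \eqref{5.20}--\eqref{5.22} then follow at once, since each of the three nonnegative summands in $g(t)$ is dominated by the whole exponent. The main obstacle is the absorption estimate for the It\^o correction, where the threshold on $\lambda_0$ combined with the Poincar\'e constant $\pi^2$ is exactly what renders the drift nonpositive; the other delicate point is the rigorous localization-plus-Fatou argument, needed because the stochastic integral is only a local martingale and $e^{g_n}$ is not \emph{a priori} integrable.
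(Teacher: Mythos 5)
Your proposal is correct and follows essentially the same route as the paper: Galerkin approximation, the energy inequality retaining half of the dissipation $2\nu\|\partial_{\xi}u_n\|_{\L^2}^2$, It\^o's formula applied to the exponential, and absorption of the quadratic-variation term via the Poincar\'e inequality under the stated threshold on $\lambda_0$, followed by passage to the limit using Lemma \ref{lem5.12}. Your treatment of the final step via a localized supermartingale and Fatou's lemma is in fact slightly more careful than the paper's, which takes expectations directly, asserts the local martingale term has zero mean, and invokes Gronwall's lemma.
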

	\begin{proof}
		First we establish the result for the finite dimensional system \eqref{5.10}, and then  we pass the limit as $n\to\infty$. 	From \eqref{5.15}, we have 	\begin{align}\label{5.23}
			&\|u_{n}(t)\|^{2}_{\L^2}+ 2\nu \int_{0}^{t} \|\partial_{\xi} u_{n}(s)\|^{2}_{\L^2}\d s+ \frac{\beta}{2} \int_{0}^{t} \|u_{n}(s)\|_{\L^{2(\delta+1)}}^{2(\delta+1)}\d s\nonumber\\&\leq  \|x_{n}\|^{2}_{\L^2}  +(\Tr(\G_{n}\G^{*}_{n})+ C)t+2\int_{0}^{t}	( \G_{n} \d \W(s), u_{n}(s)) ,
		\end{align}$\P$-a.s. 	Let us define
		\begin{align*}
			Z_n(t) :=	\|u_{n}(t)\|^{2}_{\L^2}+ \nu \int_{0}^{t} \|\partial_{\xi}u_{n}(s)\|^{2}_{\L^2}\d s+ \frac{\beta}{2} \int_{0}^{t} \|u_{n}(s)\|_{\L^{2(\delta+1)}}^{2(\delta+1)}\d s.
		\end{align*} Then from \eqref{5.23}, we find
		\begin{align*}
			Z_n(t)\leq \|x_n\|_{\L^2}^2-\nu\int_{0}^{t}\|\partial_{\xi} u_n(s)\|_{\L^2}^2+ (\Tr(\G_{n}\G^{*}_{n})+C)t +2\int_{0}^{t}	( \G_{n} \d \W(s),u_n(s))
		\end{align*}$\P$-a.s., for all $t\in[0,T]$. Applying It\^o's formula to the process $e^{\lambda_0Z_n(t)}$, using the chain rule, we obtain
		\begin{align}\label{5.24}\nonumber
			\d (e^{\lambda_0Z_n(t)})&= e^{\lambda_0Z_n(t)}\bigg[\lambda_0\d Z_n(t)+\frac{\lambda_0}{2}\d[Z_n,Z_n]_t\bigg] \nonumber\\& \leq \lambda_0e^{\lambda_0Z_n(t)} \big[-\nu\|\partial_{\xi}u_n(t)\|_{\L^2}^2+\Tr(\Q_n)+C+2\lambda_0\|\G^*u_n(t)\|_{\L^2}^2\big]\d t\nonumber\\&\quad +2\lambda_0e^{\lambda_0Z_n(t)}(\G_n\d\W(t),u_n(t)).
		\end{align}The following inequalities are easy to obtain: \begin{align}\label{5.25}
			\Tr(\Q_n)\leq \Tr(\Q),\; \|x_n\|_{\L^2}\leq \|x\|_{\L^2}.
		\end{align}Also we have
		\begin{align}\label{5.26}
			\|\G_n^*u_n\|_{\L^2}^2\leq \|\G^*_{n}\|^{2}_{\mathcal{L}(\L^2(\1))} \|u_{n}(t)\|^{2}_{\L^2} \leq \|\Q\|_{\mathcal{L}(\L^2(\1))}\|u_{n}(t)\|^{2}_{\L^2},
		\end{align} and by Poincar\'e's inequality, we get 
		\begin{align}\label{5.27}
			\|u_{n}(t)\|^{2}_{\L^2}\leq \frac{1}{\pi^2} \|\partial_{\xi} u_{n}(t)\|^{2}_{\L^2}.\end{align}
		Using \eqref{5.25}-\eqref{5.27} in \eqref{5.24}, we obtain 
		\begin{align*}
			\d (e^{\lambda_0Z_n(t)})&\leq \lambda_0e^{\lambda_0Z_n(t)}\bigg[-\nu\|\partial_{\xi}u_n(t)\|_{\L^2}^2+\Tr(\Q)+C+\frac{2\lambda_0}{\pi^2}\|\Q\|_{\mathcal{L}(\L^2(\1))}\|\partial_{\xi}u_{n}(t)\|^{2}_{\L^2}\bigg]\\&\quad+2\lambda_0e^{\lambda_0Z_n(t)}(\G_n\d\W(t),u_n(t)).
		\end{align*}Integrating the above inequality from $0$ to $t$, we get
		\begin{align}\label{5.28}\nonumber
			e^{\lambda_0Z_n(t)} &\leq e^{\lambda_0\|x\|_{\L^2}^2}+\lambda_0(\Tr(\Q)+C)\int_{0}^{t}e^{\lambda_0Z_n(s)}\d s\\ &\quad+\lambda_0\bigg[e^{\lambda_0Z_n(t)}\int_{0}^{t}\bigg(\frac{2\lambda_0}{\pi^2}\|\Q\|_{\mathcal{L}(\L^2(\1))}-\nu\bigg)\|\partial_{\xi}u_n(t)\|_{\L^2}^2\d s\bigg]\nonumber\\&\quad+2\lambda_0e^{\lambda_0Z_n(t)}(\G_n\d\W(t),u_n(t)).
		\end{align}Taking expectation on both side, we deduce
		\begin{align*}
			\E^{x}\big(e^{\lambda_0Z_n(t)}\big) &\leq e^{\lambda_0\|x\|_{\L^2}^2}+\lambda_0(\Tr(\Q)+C)\int_{0}^{t}\E^{x}\big(e^{\lambda_0Z_n(s)}\big)\d s\\&\quad+\lambda_0\E^{x}\bigg[e^{\lambda_0Z_n(t)}\int_{0}^{t}\bigg(\frac{2\lambda_0}{\pi^2}\|\Q\|_{\mathcal{L}(\L^2(\1))}-\nu\bigg)\|\partial_{\xi}u_n(t)\|_{\L^2}^2\d s\bigg], 
		\end{align*} since the final term in \eqref{5.28} is a local martingale hence the expectation is zero. 
		Let us now choose $0<\lambda_0 < \frac{\pi^2\nu}{2\|\Q\|_{\mathcal{L}(\L^2(\1))}}$, so that the third term in the right hand side of the above inequality is  negative and  we get
		\begin{align*}
			\E^{x}\big(e^{\lambda_0Z_n(t)}\big)\leq e^{\lambda_0\|x\|_{\L^2}^2}+\lambda_0(\Tr(\Q)+C)\int_{0}^{t}\E^{x}\big(e^{\lambda_0Z_n(s)}\d s\big).
		\end{align*}An application of  Gronwall's lemma in above inequality provides\begin{align*}
			\E^{x}\big(e^{\lambda_0Z_n(t)}\big) \leq e^{\lambda_0\|x\|_{\L^2}^2}e^{\lambda_0t(\Tr(\Q)+C)}.
		\end{align*} Letting $n\to\infty$ in the above inequality for $0<\lambda_0 < \frac{\pi^2\nu}{2\|\Q\|_{\mathcal{L}(\L^2(\1))}}$, with the help of Lemma \ref{lem5.12},   we obtain for any $t\in[0,T]$,
		\begin{align*}
			\E^{x}\big(e^{\lambda_0Z(t)}\big) \leq e^{\lambda_0\|x\|_{\L^2}^2}e^{\lambda_0t(\Tr(\Q)+C)},
		\end{align*}and the proof  is completed.
	\end{proof}

	\subsection{Uniform upper bound for the $\tau$- topology}
	To prove the upper bound \eqref{5.3} in our main Theorem \ref{thrm5.1},  we use  the  criterion of  hyper-exponential recurrence established in  Theorem 2.1, \cite{LW1}, for a general polish space $\mathrm{E}$. The following result is a slight extension of the result in \cite{LW1}, to a uniform LDP over a non-empty family of initial measures (cf. \cite{MG1}). To use this result we require two properties of the associated semigroup, strong Feller and irreducibility.
	\begin{lemma}[Theorem 2.1, \cite{LW1}, or Lemma 6.1, \cite{MG2}]\label{lem5.1}
		For a subset $K$ in $\L^2(\1)$, let us define $\tau_k:=\inf\{t\geq 0: u(t)\in K\}$ and  $\tau^{(1)}_{K}:= \inf \{t \geq 1;u(t) \in K\}$. If for any $\lambda>0$, there exists a compact subset $K$ in $\L^2(\1)$ such that \begin{align}\label{6.1}
			\sup_{\varrho\in \mathcal{M}_{ \lambda_0,R}}\E^{\varrho}[e^{\lambda\tau_k}]<\infty,
		\end{align} and 
		\begin{align}\label{6.2}
			\sup_{x\in K}\E^{x}[e^{\lambda\tau_k^{(1)}}]<\infty, 
		\end{align}then $[\J\leq a]$ is $\tau$-compact for every $a\in\R^+,$ and the upper bound \eqref{5.3} uniform on $\mathcal{M}_{ \lambda_0,R}$ for the $\tau$-topology holds true.
	\end{lemma}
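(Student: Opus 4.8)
The plan is to follow Wu's strategy from \cite{LW1,LW2}, in the uniform form used in \cite{MG2,MG1}, treating \eqref{6.1}--\eqref{6.2} as the only inputs and taking as already established the strong Feller property (Proposition \ref{prop4.3}) and irreducibility (Proposition \ref{prop4.1}). The argument rests on two separate mechanisms. The strong Feller and irreducibility properties are used to \emph{identify} the rate function, i.e.\ to guarantee that the Legendre transform of the uniform Cram\'er functional associated to the family $\mathcal{M}_{\lambda_0,R}$ coincides with the level-$2$ Donsker--Varadhan entropy $\J$ of \eqref{5.7} (this is the content of the Feller identification in \cite{LW2}, Proposition B.13, together with Lemma \ref{lem4.5}); whereas the hyper-exponential recurrence hypotheses \eqref{6.1}--\eqref{6.2} are used to produce the \emph{exponential tightness} that upgrades a weak bound to the $\tau$-topology and forces the level sets of $\J$ to be compact.

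First I would record the general weak upper bound. Since $\mathrm{P}_t$ is Feller, the G\"artner--Ellis theorem applied to the Cram\'er functional yields, uniformly over $\varrho\in\mathcal{M}_{\lambda_0,R}$, the large deviation upper bound \eqref{5.3} for every set that is compact in the \emph{weak} topology of $\M_1(\L^2(\1))$, with rate function $\J$; the lower semicontinuity and the convex-duality characterisation of $\J$ come from the identification step above. The entire remaining difficulty is therefore to pass from weak-compact sets to arbitrary $\tau$-closed sets, and to establish goodness.

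The bridge is an excursion/renewal decomposition anchored at the compact set $K$ furnished by the hypotheses. Writing $\tau_0:=\tau_K$ and defining inductively the successive return times $\tau_{n+1}:=\tau_n+\tau_K^{(1)}\circ\theta_{\tau_n}$, the strong Markov property together with \eqref{6.2} gives the sub-multiplicative control
\begin{align*}
\sup_{x\in K}\E^{x}\big[e^{\lambda\tau_n}\big]\le\Big(e^{\lambda}\sup_{x\in K}\E^{x}\big[e^{\lambda\tau_K^{(1)}}\big]\Big)^{n},
\end{align*}
while \eqref{6.1} controls, uniformly in $\varrho\in\mathcal{M}_{\lambda_0,R}$, the cost of the first approach to $K$. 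These estimates show that the number of excursions completed before time $t$, and the total occupation time spent away from $K$, are exponentially negligible; concretely, for each $L>0$ one constructs a $\tau$-compact set $\mathcal{K}_L\subset\M_1(\L^2(\1))$ (of measures charging neighbourhoods of $K$) such that
\begin{align*}
\limsup_{t\to\infty}\frac{1}{t}\log\sup_{\varrho\in\mathcal{M}_{\lambda_0,R}}\P_\varrho\{L_t\notin\mathcal{K}_L\}\le-L.
\end{align*}
This is $\tau$-exponential tightness. Combining it with the weak upper bound localises the deviations onto the $\tau$-compact sets $\mathcal{K}_L$, where the weak and $\tau$ topologies agree on the relevant Borel sets, and thereby promotes \eqref{5.3} to all $\tau$-closed sets; the same tightness shows $[\J\le a]$ is contained in a $\tau$-compact set and, being $\tau$-closed by lower semicontinuity, is itself $\tau$-compact, so that $\J$ is good.

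The main obstacle is precisely this $\tau$-exponential tightness, since it must hold uniformly and simultaneously over the starting points $x\in K$ (through \eqref{6.2}) and over the initial laws $\varrho\in\mathcal{M}_{\lambda_0,R}$ (through \eqref{6.1}); transporting the point-wise renewal estimates into a statement about the empirical measure $L_t$ in the strong $\tau$-topology is delicate, and it is here that the \emph{compactness} of $K$ in $\L^2(\1)$ is used decisively to prevent the occupation measure from escaping to infinity. Once this uniform tightness is secured, the remaining passages are routine invocations of the abstract results of \cite{LW1,LW2}.
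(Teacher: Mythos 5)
The paper gives no proof of this lemma at all: it is imported verbatim from Theorem~2.1 of \cite{LW1} (see also Lemma~6.1 of \cite{MG2}), so the only meaningful comparison is against Wu's original argument. Your outline gets the global architecture right (identification of the rate function via the Feller/strong Feller machinery of \cite{LW2}, a weak upper bound from the Cram\'er functional, and the recurrence hypotheses \eqref{6.1}--\eqref{6.2} doing the work of upgrading to the $\tau$-topology and to goodness of $\J$), and the renewal estimate $\sup_{x\in K}\E^x[e^{\lambda\tau_n}]\le(\sup_{x\in K}\E^x[e^{\lambda\tau_K^{(1)}}])^n$ via the strong Markov property is correct (your extra factor $e^{\lambda}$ is harmless).

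The gap is at the central step, and it is conceptual rather than merely ``delicate.'' You propose to obtain $\tau$-exponential tightness by constructing $\tau$-compact sets $\mathcal{K}_L\subset\M_1(\L^2(\1))$ consisting of ``measures charging neighbourhoods of $K$.'' A family of probability measures that puts mass close to $1$ on a fixed compact set is relatively compact only in the \emph{weak} topology (Prokhorov); it is in general nowhere near compact in the $\tau$-topology $\sigma(\M_b,\mathrm{B}_b)$, for which the Dunford--Pettis criterion requires all measures in the family to be absolutely continuous with respect to a single dominating measure with uniformly integrable densities. Likewise, your assertion that ``the weak and $\tau$ topologies agree on the relevant Borel sets'' of a $\tau$-compact set is not a valid justification for transferring the weak upper bound. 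The actual proof in \cite{LW1,LW2} does not proceed by tightness of $L_t$ at all: the hyper-exponential recurrence is used to show that the Feynman--Kac operators $f\mapsto\E^x[e^{\int_0^tV(u_s)\d s}f(u_t)]$ are \emph{uniformly integrable} in the sense of \cite{LW2}, and it is this operator-theoretic uniform integrability --- combined with $\varrho\ll\mu$ on $[\J<\infty]$ as in Lemma \ref{lem4.5} --- that yields both the $\tau$-compactness of the level sets $[\J\le a]$ and the upper bound for $\tau$-closed sets. As written, your route would only reprove the upper bound for the usual weak convergence topology, which is strictly weaker than the statement of the lemma.
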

	To prove the upper bound \eqref{5.3}, it is enough to show the estimates \eqref{6.1} and \eqref{6.2} holds for our model. For that we choose a compact subset $K\subset \L^2(\1)$ given by 
	\begin{align}\label{6.3}
		K:= \big\{x \in \H_0^1(\1); \|\partial_{\xi}x\|_{\L^2}\leq M \big\},
	\end{align}where $M$ is the finite real number, which will be fixed later. Using the definition of occupation measure for $n\geq 2$, we obtain 
	\begin{align*}
		\mathbb{P}_{\varrho}\left\{\tau^{(1)}_{K}>n\right\}\leq 	\mathbb{P}_{\varrho}\left\{L_{n}(K)\leq \frac{1}{n}\right\} = 	\mathbb{P}_{\varrho}\left\{L_{n}(K^{c})\geq 1- \frac{1}{n}\right\}.
	\end{align*} For the set $K$ defined in \eqref{6.3}, an application of Markov's inequality yields \begin{align*}
		L_{n}(K^{c}) \leq \frac{1}{M^{2}} L_{n}\left(\|\partial_{\xi} x\|^{2}_{\L^2(\1)}\right).\end{align*}
	For any fixed real number $\lambda_0$ such that $0<\lambda_0 <\frac{\pi^2\nu}{2\|\Q\|_{\mathcal{L}(\L^2(\1))}},$ using Markov's inequality, we obtain 
	\begin{align}\label{6.4}\nonumber
		\P_{\varrho}\left\{\tau_K^{(1)}\geq n\right\}&  \leq  \P_{\varrho}\left\{L_n\left(\|\partial_{\xi}x\|_{\L^2}^2\right)\geq M^2\left(1-\frac{1}{n}\right)\right\} \nonumber\\& 
		\leq \P_{\varrho}\left\{\frac{\nu\lambda_0}{n}\int_{0}^{n}\|\partial_{\xi}u(s)\|_{\L^2}^2\d s\geq \nu\lambda_0M^2\bigg(1-\frac{1}{n}\bigg)\right\}\nonumber\\& 
		\leq \exp\bigg(-n\nu\lambda_0M^2\bigg(1-\frac{1}{n}\bigg)\bigg)\E^{\varrho}\bigg\{\exp\bigg(\nu\lambda_0\int_{0}^{n}\|\partial_{\xi}u(s)\|_{\L^2}^2\d s\bigg)\bigg\}.
	\end{align}For any initial measure $\3\in \M_1(\L^2(\1))$, integrating the exponential estimate \eqref{5.21} w.r.t. $\3(\d x)$, we find
	\begin{align*}
		\E^{\3}\bigg\{\exp\bigg(\lambda_0\nu \int_{0}^{n}\|\partial_{\xi}u(s)\|_{\L^2}^2\d s \bigg)\bigg\} \leq e^{\lambda_0t(\Tr (\Q)+C)}\3(e^{\lambda_0\|\cdot\|_{\L^2}^2}).
	\end{align*}Substituting the above inequality in \eqref{6.4}, we get
	\begin{align}\label{6.5}
		\P_{\varrho}(\tau_K^{(1)}\geq n) \leq \3(e^{\lambda_0\|\cdot\|_{\L^2}^2})e^{-nC_1\lambda_0}, \ \text{ for all }\  n\geq 2,
	\end{align} where the constant $C_1:=\frac{M^2}{2}-\Tr(\Q)-C(\beta,\delta)$ and $C(\beta,\delta)=\big(\frac{2}{\beta(\delta+1)}\big)^{\frac{1}{\delta}}\frac{\delta}{\delta+1}$.
	Fix $\lambda>0$. Using integration by parts formula and \eqref{6.5}, we deduce
	\begin{align*}
		\mathbb{E}^{\varrho}[ e^{\lambda\tau^{(1)}_{K}}] &= 1+ \int_{0}^{+\infty} \lambda e^{\lambda t}\mathbb{P}_{\varrho}(\tau^{(1)}_{K}>t)\d t\leq 1+\sum_{n=0}^{\infty}  \lambda e^{\lambda n}\mathbb{P}_{\varrho}(\tau^{(1)}_{K}>n) \\ &
		\leq e^{2\lambda} + \sum_{n\geq 2} \lambda e^{\lambda(n+1)}\mathbb{P}_{\varrho}(\tau^{(1)}_{K}>n) \\ & \leq e^{2\lambda} \bigg(1+\lambda\varrho (e^{\lambda_{0}\|\cdot\|^{2}})\sum_{n\geq 2}e^{-n(\lambda_{0}C_1-\lambda)} \bigg).
	\end{align*} Using definition \eqref{6.3} of the subset $K$, we can choose the constant $M$ appearing in the definition \eqref{6.3} of $K$ such that $\lambda_{0}C_1-\lambda\geq 1$. Also note that for any $x\in K$, we can use Poincar\'e inequality as $\|x\|_{\L^2}^2\leq  \frac{\|\partial_{\xi}x\|_{\L^2}^2}{\pi^2}\leq \frac{M^2}{\pi^2}$. Taking the supremum over the set $\{\3=\delta_{x}\; x\in K\}$, we find 
	\begin{align*}
		\sup_{x \in K}	\mathbb{E}^{\varrho}[ e^{\lambda\tau^{(1)}_{K}}] \leq  e^{2\lambda} \bigg(1+\lambda e^{\frac{\lambda_0M^2}{\pi^2}}\sum_{n\geq 2}e^{-n(\lambda_{0}C_1-\lambda)} \bigg) <\infty,
	\end{align*} and hence \eqref{6.3} holds. We can obtain \eqref{6.3}  by the same procedure. From the definition of $\tau_k$, we have $\tau_k\leq \tau_k^{(1)}$ and hence one can compute that
	\begin{align*}
		\sup_{\3\in\mathcal{M}_{\lambda_0,R}}\E^{\3}[ e^{\lambda\tau_K}] \leq \sup_{\3\in\mathcal{M}_{\lambda_0,R}}\E^{\3}[ e^{\lambda\tau_K^{(1)}}] \leq e^{2\lambda} \bigg(1+\lambda R\sum_{n\geq 2}e^{-n(\lambda_{0}C_1-\lambda)} \bigg) <\infty,
	\end{align*}which finishes the proof. \\
	\begin{proof}[Proof of Theorem \ref{thrm5.1}]
		We have proved \eqref{6.1} and \eqref{6.2} of Lemma \ref{lem5.1}, which  provide the good uniform upper  and lower bounds of the large deviations, that is, part (i) to (iii) of Theorem \ref{thrm5.1}.% Also the uniform lower bound \eqref{5.2} in part (ii) has been established in Proposition \ref{prop4.7}. \\
		The first part of \eqref{5.4}, that is, $\J(\varrho)<\infty \implies \varrho \ll \mu$ is given in Lemma \ref{lem4.5}.
		The second part in \eqref{5.4}, that is,   for $\varrho \in \M_1(\L^2(\1))$ with $\J(\varrho)<\infty,\; \varrho(\|\partial_{\xi} x\|_{\L^2}^2)<\infty$ can be established in a similar way as in the proof of Theorem 1.1, \cite{MG2}. 
		\iffalse
		Let us denote the minimum of two real numbers $a$ and $b$ by $a\land b$ and define a function $V_n(x):= (\lambda_0\|\partial_{\xi} x\|_{\L^2}^2) \land n$ which is bounded and measurable on $\L^2(\1)$, we have 
		\begin{align*}
			\varrho(V_n)\leq (\Lambda^0)^*(\varrho)+ \Lambda^0(V_n) \leq \J(\varrho)+\lambda_0\Tr(\Q), \end{align*}
		where we used the definition \eqref{5.8}, \eqref{5.9} and the exponential estimate \eqref{5.21} and the fact that $(\Lambda^0)^*
		=\J$. The rest will followed from Fatou's lemma.\fi 
	\end{proof}
	\begin{proof}[Proof of Corollary \ref{cor4.2}]
		The exponential estimate in Proposition \ref{prop4.10} is sufficient to extend the LDP of Theorem \ref{thrm5.1}, for the unbounded functionals and its consequences. The proof will be on the similar lines as  in the works \cite{MG2, MG1}, etc.
	\end{proof}
	
	\begin{remark}
		As discussed in \cite{GDG}, one can consider the following SGBH equation perturbed by multiplicative  (or correlated)  random force also: 
		\begin{equation}\label{22}
			\left\{
			\begin{aligned}
				\d  u(t)&=\{-\nu\A u(t)-\alpha\B (u(t))+\beta \c(u(t))\}\d t + g(u(t))\d \W(t), \ t\in(0,T), \\
				u(0)&= x,
			\end{aligned}
			\right.
		\end{equation}
		where $g:\L^2(\1)\to[a,b]$ is Lipschitz continuous, $0<a<b<\infty$. The analysis of such problems will be carried out in a future work. One can also consider  SGBH equation perturbed by $\alpha$-stable noise and establish ergodicity results as discussed in \cite{RWJXLX,LXu}, etc. This problem will also be considered in a future work. 
	\end{remark}

	\medskip\noindent
	{\bf Acknowledgments:} The first author would like to thank Ministry of Education, Government of India - MHRD for financial assistance. M. T. Mohan would  like to thank the Department of Science and Technology (DST), India for Innovation in Science Pursuit for Inspired Research (INSPIRE) Faculty Award (IFA17-MA110).  
	
	\medskip\noindent
	{\bf Data availability:} 
	Data sharing not applicable to this article as no datasets were generated or analysed during the current study.

\end{document}